\documentclass[notitlepage,11pt,reqno]{amsart}  
\usepackage[foot]{amsaddr}
\usepackage[numbers,sort]{natbib}
\usepackage{amssymb,nicefrac,bm,upgreek,mathtools,verbatim}
\usepackage[final]{hyperref}
\usepackage[mathscr]{eucal}
\usepackage{dsfont} 
\usepackage{graphicx}

\usepackage[margin=10pt,font=small,labelfont=bf]{caption}
\usepackage[margin=1in]{geometry} 
\usepackage{enumitem}
\usepackage{yhmath}
\usepackage{relsize}
\usepackage{float}
\usepackage{caption}
\usepackage{subcaption}
\usepackage{mathrsfs}

\usepackage[ruled,vlined]{algorithm2e}

\allowdisplaybreaks  

\usepackage{color}  
\definecolor{dmagenta}{rgb}{.4,.1,.5}       
\definecolor{dblue}{rgb}{.0,.0,.5}     
\definecolor{mblue}{rgb}{.0,.0,.8}     
\definecolor{ddblue}{rgb}{.0,.0,.4}            
\definecolor{dred}{rgb}{.6,.0,.0}   
\definecolor{dgreen}{rgb}{.0,.5,.0}  
\definecolor{Eeom}{rgb}{.0,.0,.5}

\usepackage[normalem]{ulem}

\newtheorem{lemma}{Lemma}[section]
\newtheorem{theorem}{Theorem}[section]
\newtheorem{proposition}{Proposition}[section]

\theoremstyle{definition}

\newtheorem{assumption}{Assumption}[section]

\theoremstyle{remark}
\newtheorem{remark}{Remark}[section]
\numberwithin{equation}{section}

\hypersetup{
  colorlinks=true,
  citecolor=dblue,
  linkcolor=dblue,
  frenchlinks=false,
  pdfborder={0 0 0},
  naturalnames=false, 
  hypertexnames=false,
  breaklinks}

\usepackage[capitalize,nameinlink]{cleveref}

\crefname{section}{Section}{Sections}
\crefname{subsection}{Section}{Sections}
\crefname{condition}{Condition}{Conditions}
\crefname{hypothesis}{Hypothesis}{Conditions}
\crefname{assumption}{Assumption}{Assumptions} 
\crefname{lemma}{Lemma}{Lemmas}

\Crefname{figure}{Figure}{Figures}

\crefformat{equation}{\textup{#2(#1)#3}}
\crefrangeformat{equation}{\textup{#3(#1)#4--#5(#2)#6}}
\crefmultiformat{equation}{\textup{#2(#1)#3}}{ and \textup{#2(#1)#3}}
{, \textup{#2(#1)#3}}{, and \textup{#2(#1)#3}}
\crefrangemultiformat{equation}{\textup{#3(#1)#4--#5(#2)#6}}%
{ and \textup{#3(#1)#4--#5(#2)#6}}{, \textup{#3(#1)#4--#5(#2)#6}}%
{, and \textup{#3(#1)#4--#5(#2)#6}}

\Crefformat{equation}{#2Equation~\textup{(#1)}#3}
\Crefrangeformat{equation}{Equations~\textup{#3(#1)#4--#5(#2)#6}}
\Crefmultiformat{equation}{Equations~\textup{#2(#1)#3}}{ and \textup{#2(#1)#3}}
{, \textup{#2(#1)#3}}{, and \textup{#2(#1)#3}}
\Crefrangemultiformat{equation}{Equations~\textup{#3(#1)#4--#5(#2)#6}}%
{ and \textup{#3(#1)#4--#5(#2)#6}}{, \textup{#3(#1)#4--#5(#2)#6}}%
{, and \textup{#3(#1)#4--#5(#2)#6}}

\crefdefaultlabelformat{#2\textup{#1}#3}

\newcommand{\cE}{{\mathcal{E}}}

\newcommand{\QQ}{\mathbb{Q}}
\newcommand{\R}{\mathbb{R}}

\newcommand{\beql}[1]{\begin{equation}\label{#1}}

\newcommand{\beq}{\begin{displaymath}}
\newcommand{\eeqno}{\end{displaymath}}
\newcommand{\eeq}{\end{equation}}

\newcommand{\RR}{\mathbb{R}}
\newcommand{\NN}{\mathds{N}}
\newcommand{\ZZ}{\mathds{Z}}

\newcommand{\E}{\mathbb{E}}

\newcommand{\Uadm}{\mathfrak{U}}
\newcommand{\Usm}{\mathfrak{U}_{\mathrm{SM}}}

\newcommand{\Ind}{\mathds{1}}

\newcommand{\bU}{\mathbb{U}}

\newcommand{\norm}[1]{\lVert#1\rVert}

\newcommand{\PP}{\mathbb{P}}
\newcommand{\EE}{\mathbb{E}}

\newcommand{\calP}{\mathcal{P}}

\newcommand{\calX}{\mathcal{X}}

\makeatletter
\DeclareRobustCommand\widecheck[1]{{\mathpalette\@widecheck{#1}}}
\def\@widecheck#1#2{%
    \setbox\z@\hbox{\m@th$#1#2$}%
    \setbox\tw@\hbox{\m@th$#1%
       \widehat{%
          \vrule\@width\z@\@height\ht\z@
          \vrule\@height\z@\@width\wd\z@}$}%
    \dp\tw@-\ht\z@
    \@tempdima\ht\z@ \advance\@tempdima2\ht\tw@ \divide\@tempdima\thr@@
    \setbox\tw@\hbox{%
       \raise\@tempdima\hbox{\scalebox{1}[-1]{\lower\@tempdima\box
\tw@}}}%
    {\ooalign{\box\tw@ \cr \box\z@}}}
\makeatother

\usepackage{accents}
\newlength{\dhatheight}

\newcommand\scalemath[2]{\scalebox{#1}{\mbox{\ensuremath{\displaystyle #2}}}}

\let\oldtocsection=\tocsection

\let\oldtocsubsection=\tocsubsection

\let\oldtocsubsubsection=\tocsubsubsection

\renewcommand{\tocsection}[2]{\hspace{0em}\oldtocsection{#1}{#2}}
\renewcommand{\tocsubsection}[2]{\hspace{1em}\oldtocsubsection{#1}{#2}}
\renewcommand{\tocsubsubsection}[2]{\hspace{2em}\oldtocsubsubsection{#1}{#2}}

				\graphicspath{ {./images/} }
				
				\newcommand{\ttl}{\Large Jacobi-like relative value iteration algorithms for\\[5pt]  ergodic  risk-sensitive control of {M}arkov chains
					}

\begin{document}
					
					\title[RVI algorithms for ergodic risk-sensitive control of Markov chains]{\ttl}

					\author[Sumith \ Reddy]{Sumith Reddy Anugu$^*$}
					\author[Guodong \ Pang]{Guodong Pang$^*$}
					\author[Nicola \ Sassone]{Nicola G. Sassone$^*$}
					\address{$^*$Department of Computational Applied Mathematics and Operations Research,
						George R. Brown School of Engineering and Computing, 
						Rice University,
						Houston, TX 77005}
					\email{sa167, gdpang, ngs6@rice.edu}

					\date{July 7, 2026}
					
					\allowdisplaybreaks

					\begin{abstract} 
					We propose a Jacobi-like relative value iteration (RVI) algorithm and a Gauss-Seidel-like implementation for the ergodic risk-sensitive control (ERSC) problem of a controlled discrete-time Markov chain (DTMC) on
					a finite state space. 
					Under the assumption that the DTMC is irreducible and recurrent under every stationary Markov policy, we prove that the iterates of the proposed RVI algorithms converge at a geometric rate.
					The main challenge stems from the multiplicative structure of the ERSC cost criterion and the associated  Bellman-like operators, which prevents us from adapting the analogous global contraction and bi-Lipschitz continuity properties that underlie  the proof of convergence in the average cost setting.
					We overcome this by establishing local contraction properties for the risk-sensitive Bellman-like operators and a local bi-Lipschitz continuity property for their fixed points, and use these properties to show the iterates converge geometrically. 
					We conclude by implementing our proposed RVI algorithms on two examples: service-effort control for a single-server queue of finite capacity, and maximizing the exit rate from a finite domain (on a  graph). 
 \end{abstract}

					\keywords{Ergodic risk-sensitive control problem, Markov chains, Jacobi-like and Gauss-Seidel-like relative value iteration (RVI) algorithms, convergence analysis. 
					}

			\maketitle
			
		\section{Introduction}
								
		Risk-sensitive control has been an important and active area of research because it allows decision-makers to optimize beyond mean-based criteria and account for higher order fluctuations in the objective cost; see the recent surveys~\citep{Baurle2023_survey,BISWAS2023118}. Due to this, risk-sensitive control has been investigated across a variety of  domains, including mathematical finance~\citep{Bielecki_RS_Dynamic_manage_1999,Bielecki_Cox_2005,Davis_Math_Finance_2019,Fleming_opt_growth_RS_1999,Nagai_RS_Portfolio_2002}, inventory planning~\citep{Bouakiz_inventory_exp_1992}, revenue management~\citep{Barz_RS_revenue_2007,Feng_RS_perishable_2008}, robust control theory~\citep{Dupuis_robust_RS_2000,Noorani_RS_RL_2025,Peterson_min_max_entropy_2000}, and reinforcement learning~\citep{Basu_RS_Q_learning_2008,Borkar_2002_Q_learning,Moharrami_policy_grad_exp_2024,Noorani_RS_RL_2025,WuXuRSMDPUtility2023}.
		Risk-sensitive control problems have been formulated in a variety of ways, including through mean-variance criteria~\citep{Sobel1994meanvar}, tail-based objectives~\citep{Li2022quantile_mdp}, coherent risk measures such as conditional value-at-risk~\citep{Chow2015risk}, and utility-based formulations~\citep{Baurle2023_survey,Jaquette1976utility}. Among the utility-based formulations, the use of an exponential utility function has been attractive because it obeys the principle of time consistency and is mathematically tractable for dynamic programming arguments.	In the infinite time horizon setting, the ergodic risk-sensitive criterion is particularly useful for problems in which one seeks to optimize long-run performance while still accounting for fluctuations in the cumulative cost~\citep{Baurle2023_survey, BISWAS2023118}. 
								
		In this work, we consider the ergodic risk-sensitive control (ERSC) problem  of a controlled discrete-time Markov chain (DTMC) $X = \{X_t\}_{t=0}^{\infty}$ on a finite state space with an associated transition probability $p(i,j,u)=\PP(X_1=j|X_0=i, u)$, where our objective is to minimize the following criterion for a risk-sensitivity parameter $\delta>0$:
					\begin{equation}\label{eq-intro_cost_func} 
						\limsup_{T\rightarrow \infty}\frac{1}{\delta T}\log \EE\bigg[e^{\delta \sum_{t=0}^{T-1}c(X_t, U_t)}|X_0=i\bigg]
					\end{equation}
					over  $U$ which lies in an appropriate class of control policies.
					One notable feature of this problem is that {it} takes into account all higher order moments of the total accumulated cost $ \sum_{t=0}^{T-1}c(X_t, U_t)$,  
					due to the exponential in~\eqref{eq-intro_cost_func}. This is in contrast to the average cost control problem, where the minimization criterion (over an appropriate class of controls) is given by 
					\begin{equation*}
						\limsup_{T\rightarrow \infty}\frac{1}{ T} \EE\bigg[ \sum_{t=0}^{T-1}c(X_t, U_t)|X_0=i\bigg]\,.
					\end{equation*}
					In other words, the average cost problem only concerns  with the first moment (the expectation) of the accumulated cost. Hence, the average cost control problem is also commonly referred to as the risk-neutral problem - this terminology is further aided by the fact that the sequence of optimal values of the ERSC problem  converges to the optimal value of the average cost problem, as $\delta\to 0$.  
				
				  The primary objective of the ERSC problem is to  characterize and then find  optimal control policies.
				  It is well known that in the case of finite state space, under the assumption that the DTMC is irreducible and recurrent for all stationary control policies, there exist optimal policies which are stationary. Furthermore, these optimal stationary Markov policies can be completely characterized by the multiplicative
				  Bellman equation.
				  More precisely,  if $(V^*,\lambda^*)$ is the solution pair of the Bellman equation (where $\lambda^*$ turns out to be the optimal ERSC cost and $V^*$ is referred to as the value function),  then a stationary Markov policy is optimal if and only if it is a minimizer of the Bellman equation given below (see Theorem~\ref{thm-hjb}):
				  \begin{equation}\label{eq-intro_ERSC_Bellman}
				  	V^*(i) = \min_{u }\Big(e^{\delta (c(i,u)-\lambda^*)}\sum_{j}V^*(j)p(i,j,u)\Big),
				  \end{equation}
				  where $i$ varies over the state space and the minimization is over an appropriate control set that possibly depends on $i$. 
				  However, computing the minimizers of the Bellman equation requires the knowledge of its solution pair $(V^*,\lambda^*)$ which is not readily available. Therefore, it is a common practice to turn towards numerical algorithms to compute an optimal  stationary Markov policy. {These numerical algorithms typically fall into two categories: (i)	relative value iteration (RVI), and (ii) policy iteration. In this paper, we focus on RVI algorithms for the ERSC problem, and discuss policy iteration algorithms in Section~\ref{sec-lit}}.
				  
				  For a fixed reference state $n$, the existing ERSC RVI algorithm~\citep{Bielecki_1999,Borkar_2002,Cavazos_2003} proceeds as follows. Given an iterate for the value function $V^k$, the next optimal cost iterate $\lambda^{k+1}$ is computed by evaluating the RHS of \eqref{eq-intro_ERSC_Bellman} (without $\lambda^*$) at state $n$, with $V^{k}$ in place of $V^*$.
				  The next value function iterate $V^{k+1}$ is then computed by  evaluating the RHS of \eqref{eq-intro_ERSC_Bellman} at each state, with $(V^k,\lambda^{k+1})$ in place of $(V^*,\lambda^*)$; see the iteration \eqref{eq-alg-existing}.
				  This RVI algorithm can be seen as a multiplicative analogue of the average cost RVI algorithm first proposed by~\cite{White1963}. Moreover, without the minimum operation in \eqref{eq-intro_ERSC_Bellman}, this algorithm acts as a fixed point iteration for computing the principal eigenpair of a linear system.
				  
				  In this paper, we propose two  {new} RVI algorithms for the ERSC problem (see Algorithms~\ref{alg-rvi-bertsekas-JI} and~\ref{alg-rvi-bertsekas-GS}, respectively).
				  For a fixed reference state $n$, Algorithm~\ref{alg-rvi-bertsekas-JI} proceeds as follows.  Given the iterate pair $(V^k,\lambda^k)$, the value function update is given by $$V^{k+1} \doteq  \widetilde F(V^k,\lambda^k),$$ where $\widetilde F(\cdot,\lambda^k)$ is the risk-sensitive Bellman-like operator corresponding to $\lambda^k$ (see \eqref{eq-F-tilde} for its definition and Remark~\ref{rem-ESSP} for a discussion on its role in the ERSC problem). The optimal cost update is then given by
				  $$ \lambda^{k+1} \doteq  \lambda^k + \frac{1}{\delta}\gamma_k\log V^{k+1}(n),$$
				  where $\delta$ denotes the sensitivity parameter and $\gamma_k$ is a user-specified step-size.
				  We emphasize that the updates for both the value function $V^k$ and optimal cost $\lambda^k$ in Algorithm~\ref{alg-rvi-bertsekas-JI}
				  differ significantly from that of the existing RVI algorithm. In particular, the optimal cost iterate is now updated through a one-dimensional line-search with descent direction $\frac{1}{\delta}\log V^{k+1}(n)$, instead of  being computed by evaluating the multiplicative Bellman operator applied to $V^k$ at state $n$. 
				  
				  Algorithm~\ref{alg-rvi-bertsekas-GS} is structurally similar to Algorithm~\ref{alg-rvi-bertsekas-JI} and  updates the value function according to $V^{k+1} \doteq  \widetilde G(V^k,\lambda^k)$, where $\widetilde G$ is a Gauss-Seidel implementation of operator $\widetilde F$; see \eqref{eq-G-tilde} for its definition. Furthermore, Algorithms~\ref{alg-rvi-bertsekas-JI} and~\ref{alg-rvi-bertsekas-GS} resemble the well-known Jacobi and Gauss-Seidel iterations, respectively, that are
				  used in solving diagonally dominant linear systems of equations. It is for this reason that we refer
				  to them as the Jacobi-like RVI and Gauss-Seidel-like RVI algorithms, respectively.
				We also note that Algorithms~\ref{alg-rvi-bertsekas-JI} and~\ref{alg-rvi-bertsekas-GS} can be viewed as the risk-sensitive counterparts of the average cost RVI algorithms introduced in \citep{Bertsekas_VI_98}.
				
				Our main result can be stated as follows: under the assumption that the DTMC is irreducible and recurrent for all stationary policies (see Assumption~\ref{assump-main}), and for an appropriate range of step-sizes $\gamma_k$, the iterates of Algorithms~\ref{alg-rvi-bertsekas-JI} and~\ref{alg-rvi-bertsekas-GS} converge geometrically to the solution pair of the multiplicative Bellman equation (see Theorem 2). We note that the proof of Theorem~\ref{thm-main-1}  differs substantially from that of the existing RVI algorithm; the reason for this follows from the facts that the update for $\lambda^k$ is now given through a line-search iteration, and also because the operators $\widetilde F, \widetilde G$ are different than the multiplicative Bellman operator on the right hand side of \eqref{eq-intro_ERSC_Bellman}. This also leads to increased difficulty in proving our main result, as it means the iteration for $\lambda^k$ is less straightforward to analyze and we can no longer use the span-semi-norm contractive property of the multiplicative Bellman operator present in the existing algorithm. Moreover, the geometric convergence result of Theorem~\ref{thm-main-1} also differs from that of the existing algorithm; see Remark~\ref{rem-conv-compare}. 
				
					In the risk-neutral setting~\citep{Bertsekas_VI_98}, the proof of geometric convergence of the algorithms' iterates uses two key properties: the global contraction property of the average cost Bellman-like operator (with respect to a certain weighted supremum norm), and the global bi-Lipschitz continuity property of the fixed points of this operator; see equations~(17) and~(20), and the proof of Proposition~1 in \citep{Bertsekas_VI_98}. However, in the risk-sensitive setting, the operators $\widetilde F,\widetilde G$ associated with Algorithms~\ref{alg-rvi-bertsekas-JI} and~\ref{alg-rvi-bertsekas-GS} are multiplicative (instead of additive) due to the risk-sensitive cost criterion. This leads to major obstacles when trying to establish the analogous global estimates for the risk-sensitive operators $\widetilde F$ and $\widetilde G$, as the additive structure which underlies the global contraction and bi-Lipschitz continuity properties of the risk-neutral Bellman-like operator does not extend.
					
					We address these obstacles posed by the multiplicative structure by establishing suitable local estimates for the operators $F$ and $G$, where $F(h,\lambda)=\log \widetilde F(e^h,\lambda)$ and $G(h,\lambda)=\log \widetilde G(e^h,\lambda)$, and also for  the map $\lambda \mapsto h_\lambda$, where $h_\lambda$ denotes the fixed point of $F(\cdot,\lambda)$ (which is the same as the fixed point of $G(\cdot, \lambda)$). In particular, we prove that $F(\cdot,\lambda)$ and $G(\cdot,\lambda)$ satisfy local contraction properties under a chosen weighted supremum norm (see Propositions~\ref{thm-contraction} and~\ref{prop-contraction-GS}), and we also establish a local bi-Lipschitz continuity property of the map $\lambda \mapsto h_\lambda$ (see Proposition~\ref{lem-bounds-h}). We then apply these local properties to establish the desired one-step geometric convergence bounds for the value function and cost iterates (see Section~\ref{sec-proof-main-1}).
					
					The proof of the local contraction property of $F(\cdot,\lambda)$ involves first recasting  it into a variational form that is additive in $h$, but at the cost of introducing an additional maximization problem over a set of probability vectors. This enables us to upper bound $F(h_1,\lambda)-F(h_2,\lambda)$ in terms of the expectation of $h_1-h_2$  with respect to the probability vector which arises as the maximizer from the variational representations. Because the maximizing probability vector depends on the inputs $h_1,h_2$ in addition to the original transition kernel, it is difficult to ascertain whether $F(\cdot, \lambda )$ is a global contraction. We address this by lower bounding this probability vector in terms of $\|h_1\|_\infty$ and $ \| h_2\|_\infty$, and then use a certain weighted supremum norm construction tailored to our setting to show that $F(\cdot, \lambda)$ satisfies the desired local contraction property. 
					We note that the local contraction property of operator $G$ is much more involved compared to that of $F$. This is a consequence of the recursive and multiplicative structure of $G$ making it unclear how to bound its components, which prevents us from bounding the maximizing probability vector arising in its variational representation. We overcome this by establishing uniform upper and lower bounds for each component of  $G(h,\lambda)$ in both $h$ and $\lambda$. We also comment that the choice of the weighted supremum norm is important not only for the contraction property, but also in showing the geometric convergence of the algorithms' iterates, as it ensures that the value function iterates satisfy one-step contraction estimates and thus converge to their corresponding fixed points at a geometric rate. 
					
					The proof of local bi-Lipschitz continuity of $\lambda \mapsto h_\lambda$ is more subtle.  In the risk-neutral setting,  the Lipschitz bounds follow directly from the additive structure of the stochastic representation for the analogous fixed points and the uniform recurrence of the controlled DTMC, as one is able to uniformly bound the expected return times to obtain the desired result. However, in the risk-sensitive setting, the multiplicative structure via the exponential inside the expectation of $F(\cdot,\lambda)$ means that its fixed point $h_\lambda$ may not exist for all $\lambda\in \R$ (see Remark~\ref{rem-Lambda}). Furthermore, $h_\lambda$ can be expressed using a stochastic representation involving the exponential of the cumulative cost up to the first return time to the reference state $n$ (see Proposition~\ref{prop-ESSP}), but the exponential moments in this representation for $h_\lambda$ are not necessarily finite for all $\lambda \in \R$, which prevents us from  bounding $h_\lambda - h_{\lambda'}$ for arbitrary $\lambda, \lambda'$ by casting these stochastic representations in a variational form (see Remark~\ref{rem-Lipschitz-difficult}). As a result, we cannot directly adapt the approach involving the stochastic representations used in the risk-neutral setting to prove the desired Lipschitz bounds. 
					
					Our approach instead involves working with a variational form of fixed-point equation $h_\lambda = F(h_\lambda, \lambda)$.  Seeking to bound $h_\lambda - h_{\lambda'}$ in terms of $\lambda ' - \lambda$, we repeatedly apply the variational representations up to the first return time of the DTMC into the reference state $n$ and then use the maximizing probability vectors from the variational representation to construct a new DTMC with law $ Q$. Using the variational representations and an application of Dynkin's formula, we bound $h_{\lambda} - h_{\lambda'}$ in terms of  $\lambda' - \lambda$ multiplied by the expected return time of the new DTMC into state $n$ under $ Q$. This shifts the difficulty in obtaining explicit estimates of the return times under the DTMC $Q$ in terms of the original transition matrix, which is not known because the new transition probabilities depend on the fixed points $h_\lambda$ and $h_{\lambda'}$ (and these are unknown).
					We note that this does not follow immediately from the definition of $Q$, as this will lead one to an upper bound with an exponential moment which may be infinite. To overcome this difficulty, we prove suitable uniform bounds on the tail probabilities of $Q$ in terms of the original transition matrix and $\| h_\lambda\|_\infty,\| h_{\lambda'}\|_\infty.$ This then enables us to uniformly bound the return times under $Q$, which leads to the desired local bi-Lipschitz continuity result of $\lambda \mapsto h_\lambda.$ We emphasize that the proven recurrence bounds do not immediately follow from standard mixing time arguments for Markov chains~\citep{Levin_mixing_2008}, since the transformed law $Q$ depends on the fixed points $h_\lambda, h_{\lambda'}$, whose values are not readily available.
					
					Using the local contraction properties and local bi-Lipschitz continuity properties of operators $F$ and $G$, we are almost in a position to show the geometric convergence of $(h^k,\lambda^k)$, where $h^k \doteq \log V^k$; see the proof sketch at the end of Section~\ref{sec-main-result}. However, the local nature of these properties leads to two notable challenges: (i) in order to obtain suitable estimates from the local contraction property, we first have to ensure that $h^k$ remain in a bounded set, and (ii) in order to apply the local bi-Lipschitz continuity property, we have to show that $\lambda^k$ remain in a region where the corresponding fixed point $h_{\lambda^k}$ exists. Note that these challenges do not arise in the risk-neutral setting, due to the global nature of the estimates corresponding to the risk-neutral operators. We address these two points in Proposition~\ref{prop-c-alpha0}. 
					
					We also evaluate the performance of our proposed algorithms and compare them with the existing RVI algorithm 
					by implementing them on two models, chosen to capture different transition matrix structures.  The first is a service-effort control problem for a discrete-time single-server queue with finite capacity, which as a birth-death process has a tridiagonal transition matrix. Here, we vary both the state-space size and the sensitivity parameter, and find that the Gauss-Seidel-like algorithm performs particularly well when the sensitivity is large. The second is a controlled DTMC on a finite connected graph, where the objective is to maximize the exit rate from a fixed connected subgraph. We recast this problem as an ERSC problem, which allows us to compare the algorithms across graphs with different levels of connectivity (and thus sparse/dense transition matrices). 
					For the two proposed algorithms, we find that the choice of step-size is relevant for performance: larger step-sizes aid in making sufficient progress, but do not guarantee convergence. At the same time, step-sizes that are too small may converge albeit slowly. Inspired from the risk-neutral setting~\citep{Bertsekas_VI_98}, we use step-sizes that start relatively large and taper only after the iterates lie within a certain tolerance.

					\subsection{Literature review}\label{sec-lit}
				 The relative value iteration (RVI) algorithm for the ergodic control of Markov chains with finite state space and control set was  introduced in  \cite{White1963}.
				Since then, there has been an extensive body of work studying RVI algorithms for ergodic control of Markov chains under various assumptions and settings; see Section 4.5 of \cite{Bertsekas_DP} for a full account of these developments.  One important development was given in \cite{Bertsekas_VI_98}, which introduced the Jacobi and  Gauss-Seidel versions of the RVI algorithm,  inspired by the stochastic shortest path problem (SSP). Under an irreducibility condition on the controlled  DTMC, the author uses the contraction property of the Bellman operator associated with the SSP in conjunction 
				with the structure of the SSP to prove that the  iterates of both versions converge to the value function and the optimal cost at a geometric rate.

				In the context of the ERSC problem,  to our knowledge, the first work discussing the  RVI algorithm appeared in \cite{Bielecki_1999}, where the authors study a multiplicative analogue of the risk-neutral RVI algorithm studied in \cite{White1963} for finite-state controlled DTMCs. In addition to assuming irreducibility of the controlled DTMC, the authors assume that the probability of a self-loop is bounded away from $0$ for all states and controls. This allowed the authors to establish a contraction property of the Bellman operator with respect to a span semi-norm, which is then used to show  that the RVI iterates converge at a geometric rate.  This  version of the RVI algorithm has been subsequently studied by several authors under various setups and conditions in \cite{Cavazos_2003,Borkar_2002}, and  \cite{Arapostathis_2019}. The strict positivity condition  on the probability of a self-loop is removed in \cite{Cavazos_2003}, where the authors study a transformed version of the original process, under the general assumption that a solution to the multiplicative Bellman equation exists. \cite{Borkar_2002} establish  the convergence of this RVI algorithm in the countable setting  under the conditions of   aperiodicity and irreducibility, and the near-monotonicity of the running cost, by employing multiplicative ergodic theorems.  {The policy iteration algorithm, which recursively evaluates policies and improves them, has also been investigated for the ERSC problem, both for finite state space \citep{Fleming_RS_Finite_State_1997} and countable state space \citep{Biswas2021ERSC, Chen_RS_Discrete_2023}. Furthermore, the modified policy iteration algorithm, which alternates between partial policy evaluation and improvement, has been recently studied for the ERSC problem in \cite{Murthy_ERSC_MPI_2024}, where the authors work with a transformed version of the cost and transition matrix to prove that the algorithm's iterates converge geometrically (here, geometric convergence refers to the convergence of the normalized modified policy iterates to the optimal cost). We remark that Jacobi-like and Gauss-Seidel-like RVI algorithms proposed in this work (as well as their convergence) are clearly distinct from the modified policy iteration algorithm}. 
				The convergence of the existing RVI algorithm in the case of compact Polish state space was carried out  in  \cite{Arapostathis_2019}. 	 				We note that the RVI algorithm for the ERSC problem has seen applications across various domains, including portfolio optimization (\cite{Bielecki_RS_portfolio_RVI_1999,Bielecki_Cox_2005}), manufacturing systems (\cite{Coraluppi_1999}), and stochastic differential games (\cite{Arapostathis2013}). We note that the existing RVI algorithm has served as a framework for the risk-sensitive Q-learning algorithms proposed in \cite{Borkar_2002_Q_learning} and \cite{Basu_RS_Q_learning_2008}, and similarly, the policy iteration algorithm has served as a framework for actor-critic and policy gradient algorithms \citep{Borkar_sensitivity_RS_2001,Guin_RS_Actor_2026, Moharrami_policy_grad_exp_2024}.

					\subsection{Organization  of the paper} We  introduce the necessary notation in the next subsection. In Section~\ref{sec-prelim}, we introduce the ERSC problem in the case of finite state space,  propose our new Jacobi-like and Gauss-Seidel-like RVI algorithms,  and state our  main result on  their  convergence. 
					In Section~\ref{sec-auxiliary}, we establish the local contraction property and the local bi-Lipschitz continuity property of the operator $F$.  Section~\ref{sec-proof-main-1} contains the proof of our main result. 
					In Section~\ref{sec-num-imp}, we provide the numerical examples to illustrate the performances of our proposed algorithms. In Section~\ref{sec-conclude}, we conclude our work and discuss future directions. In the Appendix, we provide a notation table for our main result, the precise statement of the entropy variational formula, an additional numerical example, and the proofs for two key supporting results. 
					
    \subsection{Notation}\label{sec-notation} 
 We use $(\Omega, \mathcal{F}, \PP)$ to denote the underlying abstract probability space with $\E$ as the
associated expectation. The set of nonnegative real numbers (integers) is denoted
by $\R_+$ $(\mathbb{Z}_+)$, $\mathbb{N}$ stands for the set of natural numbers, and $\Ind_{ A}(\cdot)$ denotes the indicator function corresponding to set $A$. Let  $\R^k$ denote the $k$-dimensional Euclidean space,  $e\doteq (1,1,\ldots,1)^\top\in \R^k$ and $\|\cdot\|_\infty$ denote the usual supremum norm.   For a Polish space $\mathcal{X}$, let $\mathcal{P}(\mathcal{X})$ denote the space of Borel probability measures on $\mathcal{X}$. For two probability measures $P,Q\in \mathcal{P}(\mathcal{X}),$ we say $Q\ll P$ if $Q$ is absolutely continuous with respect to $P$.

	\section{RVI algorithms and their convergence}\label{sec-prelim}
\subsection{ERSC problem}		
Consider a controlled DTMC ${X} = \{X_t\}_{t = 0}^{\infty}$ which takes values in a finite-state space $S=\{1,\dots,n\}$. 
Let $\bU(i)$ be a compact metric space for every $i\in S$. At each instant, if the current state is $i$, then, for a chosen control  $u\in \bU(i) $, the next transition to state $j$ occurs with probability $p(i,j,u)$. Namely,  $\{p(i,j,u): i,j\in S, \text{ and } u\in \bU(i)\}$ is the associated controlled transition probability of  $X$.  Let $S_\bU\doteq \{(i,u): i\in S \text{ and } u\in \bU(i)\} $ denote the set of all allowed state-action pairs.

A control policy (sometimes referred to simply as a policy) is a sequence of controls for the DTMC $X$ at each time instant, which we denote by $U = \{U_0, U_1,\dots\}$ where $(X_t,U_t) \in S_\bU$ for all $t\in \ZZ_+$. We say a control $U$ is admissible if for every $t\in\ZZ_+$, $U_t$ is measurable with respect to the $\sigma$--algebra generated by $\{X_0,X_1,\ldots,X_t\}$. Let $\Uadm$ denote the set of all admissible control policies.  A policy $U$ is called Markov if  $U_t = v_t(X_t)$, where $v_t (\cdot)$ is such that $\big(X_t,v_t(X_t)\big)\in S_\bU$ for $t\in \ZZ_+$ and it is called stationary Markov if $v_t(\cdot)$ is independent of $t$. With slight abuse of notation, we refer to such a  stationary Markov policy by $v$ and let $\Usm$ denote the space of stationary Markov control polices.				
Given an admissible  control policy $U$ and an initial state $i$, the corresponding cost functional for the ERSC problem, with a risk-sensitive parameter $\delta>0$, is given by
\begin{equation}\label{eq-ERSC}
	\mathcal{E}^\delta_i (U)  \doteq \limsup_{T\to \infty}\frac{1}{\delta T}\log \E_i^{U}\left[e^{\delta \sum_{t=0}^{T-1}c(X_t, U_t)}\right],
\end{equation}
where $c: S_\bU\rightarrow \RR_+$ is the running cost and we write $\E_i^{U}$ to emphasize that the underlying control policy is $U$ and  $X_0=i$. Whenever the control policy $v$ lies in $ \Usm $, we write $\E_i^v$.

The objective of the ERSC problem is to minimize the cost in~\eqref{eq-ERSC} over  all the admissible controls $U$ and initial conditions $1\leq i\leq n$, \emph{i.e.},  to find \begin{equation}\label{eq-ERSC-val}\lambda^{*,\delta}\doteq\min_{1\leq i\leq n} \inf_{U \in \Uadm}\mathcal{E}^\delta_i (U)\end{equation} and also to characterize  optimal stationary Markov control policies $v$ (if they exist), \emph{i.e.}, a stationary Markov policy $v$ satisfying $\mathcal{E}^\delta_i(v) = \lambda^{*,\delta}$,  for $1\leq i\leq n$.
\begin{remark}
	It is well known  that as $\delta\to 0$, $\lambda^{*,\delta}$ converges to $\lambda^{*,0}$ the optimal cost of the long-run average (ergodic) cost control problem, given by 
	\begin{equation*}\lambda^{*,0}\doteq\min_{1\leq i\leq n} \inf_{U \in \Uadm}\mathcal{E}_{i}^0 (U),\quad\text{where}\quad\mathcal{E}^0_i (U)  \doteq \limsup_{T\to \infty}\frac{1}{ T} \E_i^{U}\left[\sum_{t=0}^{T-1}c(X_t, U_t)\right]\,.\end{equation*}
	For this reason, we often refer to this problem as the risk-neutral ergodic control problem.
\end{remark}

We now state the conditions  on the controlled transition kernel $p(i,j,u)$ and the process $X$ under which we later prove the convergence of the RVI algorithms.
\begin{assumption}\label{assump-main} The following conditions hold:
	\begin{enumerate}						\item For any $1\leq i,j\leq n$, the functions $u\mapsto c(i,u)$ and $u\mapsto p(i,j,u)$ are continuous on $\bU(i)$.
		\item The DTMC $X$ is irreducible under all stationary Markov policies.
	\end{enumerate}
\end{assumption}

The following theorem, which is Theorem 3.1 of \cite{Cavazos-Cadena2002}, provides the Bellman optimality criterion which completely characterizes the optimal stationary Markov controls.					
\begin{theorem}\label{thm-hjb}
	Under Assumption~\ref{assump-main}, there exists a  function  $V^{*,\delta}:S \rightarrow \R_+$ that is unique up to a multiplicative constant and satisfies
	\begin{equation}\label{eq-hjb}
		e^{\delta \lambda^{*,\delta}}V^{*,\delta}(i) = \min_{u \in \mathbb{U}(i)} \bigg[e^{\delta c(i,u)}\sum_{j =1}^nV^{*,\delta}(j)p(i,j,u)\bigg],  \quad \text{ for $1\leq i\leq n$}\,.
	\end{equation}
	Moreover, $v\in \Usm$ is optimal if and only if it satisfies 
	\begin{equation}\label{eq-optimality}
		\min_{u \in \mathbb{U}(i)} \bigg[e^{\delta c(i,u)}\sum_{j =1}^nV^{*,\delta}(j)p(i,j,u)\bigg]=  e^{\delta c(i,v(i))}\sum_{j =1}^nV^{*,\delta}(j)p(i,j,v(i)),  \quad \text{ for $1\leq i\leq n$}\,.
	\end{equation}
\end{theorem}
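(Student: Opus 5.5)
We would prove Theorem~\ref{thm-hjb} in three steps: produce a solution of \eqref{eq-hjb} from a Perron--Frobenius argument with a minimax selection over stationary policies, identify its eigenvalue with $\lambda^{*,\delta}$ by a verification argument, and then derive uniqueness and the optimality characterization \eqref{eq-optimality} from irreducibility. For $v\in\Usm$ let $P_v(i,j)=e^{\delta c(i,v(i))}p(i,j,v(i))$. This is a nonnegative matrix, and by Assumption~\ref{assump-main}(2) it is irreducible. Perron--Frobenius gives a simple eigenvalue $\rho(v)>0$ with a strictly positive eigenvector $\phi_v$, and $\tfrac1\delta\log\rho(v)=\mathcal{E}^\delta_i(v)$ for every $i$. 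Both $(i,u)\mapsto e^{\delta c(i,u)}$ and $p(i,j,u)$ are continuous on compact sets, so the minimization in \eqref{eq-hjb} is attained.

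\textbf{Existence.} Define the operator $T\phi(i)=\min_{u}e^{\delta c(i,u)}\sum_j\phi(j)p(i,j,u)$. It is monotone and positively homogeneous. Let $\rho^*=\inf_{v\in\Usm}\rho(v)$. We would show that $\rho^*$ is attained at some $v^*$ and that $T\phi_{v^*}=\rho^*\phi_{v^*}$, via a policy-improvement step. If $T\phi_{v}\le\rho(v)\phi_v$ held with strict inequality at some state, then taking $w$ to be the minimizing selector would give $P_w\phi_v\le\rho(v)\phi_v$ with strict inequality somewhere. The subinvariance theorem for irreducible matrices then yields $\rho(w)<\rho(v)$. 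Since $\Usm$ is compact and $\rho(v)$ depends continuously on $v$, a minimizer $v^*$ exists. At $v^*$ no strict improvement is possible, so $T\phi_{v^*}=\rho^*\phi_{v^*}$. The same subinvariance argument shows that any positive solution of $T\phi=\rho\phi$ has $\rho=\rho^*$.

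\textbf{Verification.} Set $V=\phi_{v^*}$. For an arbitrary admissible $U$, the process $M_T=e^{\delta\sum_{t<T}c(X_t,U_t)}V(X_T)(\rho^*)^{-T}$ is a submartingale under $\PP^U_i$, because $V\le(\rho^*)^{-1}e^{\delta c(i,u)}\sum_jV(j)p(i,j,u)$ for every $u$. Since $V$ is bounded above and below by positive constants, this gives $\E^U_i[e^{\delta\sum_{t<T}c}]\ge C(\rho^*)^T$. Hence $\mathcal{E}^\delta_i(U)\ge\frac1\delta\log\rho^*$. Equality holds for $v^*$, so $\lambda^{*,\delta}=\frac1\delta\log\rho^*$, and $(V,\lambda^{*,\delta})$ solves \eqref{eq-hjb}.

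\textbf{Uniqueness and characterization.} Let $V'$ be a second nonnegative solution. It is strictly positive, since a zero at one state propagates to all states along positive-probability paths of the minimizing policy, which is irreducible. Take $w$ to be a minimizing selector for $V'$. Then $P_wV'=\rho^*V'$ while $P_wV\ge\rho^*V$. Let $\theta=\max_i V(i)/V'(i)$. The vector $\theta V'-V\ge0$ vanishes somewhere and satisfies $P_w(\theta V'-V)\le\rho^*(\theta V'-V)$. Irreducibility of $P_w$ forces $\theta V'-V\equiv0$.

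For \eqref{eq-optimality}: if $v$ attains the minimum, then $P_vV=\rho^*V$, so $\rho(v)=\rho^*$ and $v$ is optimal. Conversely, if $v$ is optimal but the minimum is strict at some state, then $P_vV\ge\rho^*V$ with strict inequality somewhere. The subinvariance theorem then gives $\rho(v)>\rho^*$, a contradiction.

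\textbf{Main obstacle.} The delicate step is existence, specifically showing that the infimal Perron root satisfies the nonlinear equation exactly rather than only an inequality. This requires the strict-inequality version of subinvariance for irreducible nonnegative matrices, applied carefully, together with continuity of $\rho(v)$ in $v$ over the compact action sets. Since the paper states this result as Theorem 3.1 of \cite{Cavazos-Cadena2002}, we would ultimately cite that reference for the full details.
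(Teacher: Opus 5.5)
Your argument is correct. It takes a genuinely different route from the paper, which gives no proof of Theorem~\ref{thm-hjb} at all: it imports the result as Theorem~3.1 of \cite{Cavazos-Cadena2002}. That is the same work which, per Remark~\ref{rem-ESSP}, develops the auxiliary expected-total-cost problem behind the return-time representation \eqref{eq-V-lambda-def} in Proposition~\ref{prop-ESSP}.

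You instead work policy by policy with the Perron roots $\rho(v)$ of $P_v$:
\begin{itemize}
\item a minimizer $v^*$ of $\rho$ over the compact set $\Usm$ exists by continuity of the spectral radius in the matrix entries;
\item strict subinvariance rules out any strict improvement, so $\phi_{v^*}$ solves \eqref{eq-hjb} exactly;
\item the submartingale verification identifies $\lambda^{*,\delta}=\frac{1}{\delta}\log\min_{v}\rho(v)$;
\item uniqueness and \eqref{eq-optimality} follow from irreducibility plus the same comparison principle.
\end{itemize}
These steps close as written, so you do not need to fall back on the citation. Two small fill-ins are needed. First, the competing solution $V'$ must be nonzero, since $V\equiv 0$ trivially satisfies \eqref{eq-hjb}. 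Second, both strict comparison statements you use follow by pairing with the strictly positive left Perron vector of the irreducible matrix: $P_wx\le sx$ and $P_vx\ge sx$, each with $x>0$ and the inequality strict somewhere. Note that your converse-optimality step uses the super-invariance direction, not the sub-invariance one you name.

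Your route buys a short, self-contained, finite-dimensional proof, with an explicit variational formula for the optimal cost as the smallest Perron root over stationary policies. What it does not produce is the stochastic representation \eqref{eq-V-lambda-def} through the return-time problem. That representation is the form the paper actually exploits afterwards: in Lemma~\ref{lem-hl}, in Proposition~\ref{lem-bounds-h}, and in the structure of $\Lambda$. The cited framework yields the Bellman solution and the objects $V_\lambda$ from the same machinery. Finally, because your argument rests on Perron--Frobenius, it does not transfer directly to countable state spaces, which the paper flags as a future direction.
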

In what follows, we often suppress the dependence of $(V^{*,\delta},\lambda^{*,\delta})$ on $\delta$ and simply write  $(V^*,\lambda^*)$. 				
\subsection {Design of RVI algorithms}\label{sec-sub-design}
It is evident from~\eqref{eq-optimality} that  to characterize an optimal stationary  Markov policy, we require the knowledge of the value function $V^{*}$ which in turn, requires the knowledge of $\lambda^{*}$. Therefore, it is of significant interest to numerically compute the pair $(V^{*},\lambda^{*})$. As discussed in Section~\ref{sec-lit}, it is well known that the pair $(V^{*},\lambda^{*})$ can be computed from a variety of numerical algorithms.  Many of them fall into two categories: relative value iteration (RVI) and policy iteration. As mentioned already, we confine ourselves to investigating RVI type algorithms in this paper. 
Most of the earlier works on RVI algorithms for the ERSC problem study a version from \cite{Borkar_2002}, which we refer to as the existing RVI algorithm (see the update~\eqref{eq-alg-existing}), which generates iterates designed to find a fixed point of the Bellman equation from Theorem~\ref{thm-hjb}.
In Theorem 4.5 of~\cite{Borkar_2002}, it is shown that  as $k\to \infty$,   $(V^k,\lambda^k) \to (V^*,\lambda^*)$, under the assumption that the controlled DTMC is irreducible, aperiodic, and recurrent for every stationary Markov control policy. We refer the reader to Remark~\ref{rem-conv-compare} for a discussion on the rate of convergence of this algorithm.		

\smallskip					

\begin{algorithm}[H]
	\TitleOfAlgo{Existing RVI Algorithm}
	\begin{enumerate}
		\item [(i)] Initialize with $k=0$ and $V^0:S \rightarrow \R^+$.
		\item [(ii)] Update: for $1\leq i\leq n$,
		\begin{equation}\label{eq-alg-existing}
			\begin{aligned}
				\lambda^{k+1} &= \min_{u \in \mathbb{U}(n)} \frac{1}{\delta} \log \bigg[e^{\delta c(n,u) }\sum_{j =1}^nV^k(j)p(n,j,u)\bigg],\\
				V^{k+1}(i) &= \min_{u \in \mathbb{U}(i)} \bigg[e^{\delta (c(i,u) - \lambda^{k+1}) }\sum_{j =1}^nV^k(j)p(i,j,u)\bigg] \,.
			\end{aligned}
		\end{equation}
	\item [(iii)] Set $k=k+1$.
	\item [(iv)] Repeat Steps (ii) and (iii). 
	
\end{enumerate}
\end{algorithm}

{ The design of our RVI algorithms is inspired by the following observation: working with \eqref{eq-hjb}, we move $e^{\delta \lambda^*}$ to the RHS, factor $V^*(n)$ from the RHS of \eqref{eq-hjb}, and set $\overline \lambda^*\doteq \lambda^*- \frac{1}{\delta}\log V^*(n)$ (which is possible since $V^*>0$) to obtain}
\begin{align}\nonumber
V^*(i) &= \min_{u \in \mathbb{U}(i)} \bigg[e^{\delta( c(i,u) - \bar \lambda^*) }\Big( p(i,n,u)+ \sum_{j =1}^{n-1}\frac{V^*(j)}{V^*(n)}p(i,j,u)\Big)\bigg], \quad \text{ for $1\leq i\leq n$}\,.
\end{align} 
Here and in what follows, state $n$ is chosen  as the reference state. We re-arrange the above display with $\bar V^*\doteq \frac{1}{V^*(n)}V^*$ 						to obtain
\begin{equation*} 
\begin{aligned}							V^*&= \widetilde F\big( \bar V^*, \bar \lambda^*\big),\\
	\lambda^* &= \bar \lambda^*+ \frac{1}{\delta}\log V^*(n),
\end{aligned}
\end{equation*}
where $\widetilde F: \RR^n\times \RR\rightarrow \RR^n$ is defined by
\begin{equation}\label{eq-F-tilde} 
\begin{aligned}
	&\widetilde F_i(V,\lambda)\doteq \min_{u\in\bU(i)}  \bigg[e^{(\delta c(i,u)-\lambda)}\bigg(p(i,n,u) + \sum_{j =1}^{n-1}V(j)p(i,j,u)\bigg)\bigg] ,\quad \text{for} \,\, 1 \le i \le n\,.
\end{aligned}
\end{equation} 
From here, we see the clear evidence of a Jacobi-type  iteration procedure with iterates $(V^k,\lambda^k)$  if $(\bar V^*,\bar \lambda^*)$ is replaced by $(V^k,\lambda^k)$ and $(V^*,\lambda^*)$ is replaced by $(V^{k+1},\lambda^{k+1})$, for $k\geq 0$ with $(V^0,\lambda^0)$ chosen as the initial condition. We note two key observations regarding the resulting iteration procedure: (i) the condition for  $\lambda^{k+1}=\lambda^k$, \emph{i.e.}, the fixed point of the iteration procedure, is that $V^{k+1}(n)=1$, and (ii)  $(V^*,\lambda^*)$ is a fixed point {(and is unique, up to the normalization $V^*(n) = 1$)}. We also note that these facts remain true if we replace $\delta^{-1}\log V^{k+1}(n)$ by $\gamma_k \delta^{-1} \log V^{k+1}(n)$, for any family of positive real numbers $\{\gamma_k\}_{k\in \ZZ_+}$. As a result, this discussion leads to the following RVI algorithm:

\begin{algorithm}[H]
\caption{Jacobi-like RVI algorithm}\label{alg-rvi-bertsekas-JI}	
\begin{enumerate}
	\item [(i)] Initialize  $k=0$, $V^0:S \rightarrow \R^+$ and $\lambda^0 \in \RR_+$.
	\item [(ii)] Choose step-size $0<\gamma_k<1$ and update: 
	
	\begin{equation}\label{eq-alg-2-V}
		\begin{aligned}
			V^{k+1} &= \widetilde F(V^k,\lambda^k),\\
			\lambda^{k+1}&= \lambda^k + {\frac{1}{\delta}}\gamma_k \log V^{k+1}(n), 
		\end{aligned}
	\end{equation}
	\item [(iii)] Set $k=k+1$.
	\item [(iv)] Repeat Steps (ii) and (iii).
\end{enumerate}
\end{algorithm}			
From here, we observe that we can replace~\eqref{eq-alg-2-V} with a Gauss-Seidel updating procedure. To express this, define $\widetilde G: \R^n \times \R \rightarrow\R^n$ by \begin{equation}\label{eq-G-tilde}
\scalemath{0.9}{	\begin{aligned}
		\widetilde G_i(V,\lambda)&\doteq  \begin{cases} \underset{{u \in \mathbb{U}(1)}}{\min}\bigg[e^{(\delta c(1,u) - \lambda) }\bigg( p(1,n,u) + \sum_{j =1}^{n-1}V(j)p(1,j,u) \bigg)\bigg], &\text{for $i=1$,}\\
			\underset{u\in \mathbb U(i)}{\min}\bigg[e^{(\delta c(i,u) - \lambda)}\bigg(p(i,n,u) + \sum_{j =1 }^{i-1}\widetilde G_j(V,\lambda)p(i,j,u) + \sum_{j=i}^{n-1}V(j)p(i,j,u) \bigg)\bigg], &\text{for $i\neq 1$}\,.
\end{cases}  \end{aligned}}
\end{equation}
This gives us the following RVI algorithm

\begin{algorithm}[H]	
\caption{Gauss-Seidel-like RVI algorithm}\label{alg-rvi-bertsekas-GS}
\begin{enumerate}
	\item [(i)] Initialize $k=0$,  $V^0:S \rightarrow \R^+$ and $\lambda^0 \in \RR_+$.
	\item [(ii)]  Choose step-size $0<\gamma_k<1$ and update:
	\begin{equation*}
		\begin{aligned}
			V^{k+1} &= \widetilde G(V^k,\lambda^k),\\
			\lambda^{k+1}&= \lambda^k + {\frac{1}{\delta}} \gamma_k \log V^{k+1}(n)\,.
		\end{aligned}
	\end{equation*}
	\item [(iii)] Set $k=k+1$.
	\item [(iv)] Repeat Steps (ii) and (iii).
\end{enumerate}
\end{algorithm}

\begin{remark} The reason behind the inclusion of the step-sizes $\{\gamma_k\}_{k\in \ZZ_+}$ in the designs of Algorithms~\ref{alg-rvi-bertsekas-JI} and~\ref{alg-rvi-bertsekas-GS} is two-fold: (i) we will see in Theorem~\ref{thm-main-1} that the inclusion of an appropriate family of  $\{\gamma_k\}_{k\in \ZZ_+}$ ensures the  geometric convergence of these algorithms, and (ii) while numerically implementing these algorithms, the inclusion of an appropriate choice of $\{\gamma_k\}_{k\in \ZZ_+}$ can aid us in achieving faster convergence; see Section~\ref{sec-num-imp} for more elaborate discussions.
\end{remark} 

To investigate the convergence of Algorithms~\ref{alg-rvi-bertsekas-JI} and~\ref{alg-rvi-bertsekas-GS}, it is evident that the risk-sensitive Bellman-like operators $\widetilde F,\widetilde G: \RR^n\times \RR\rightarrow \RR^n$ defined above
play a fundamental role. 
Observe that for every $\lambda\in \RR$, the fixed {point equations} of $\widetilde F(\cdot,\lambda)$ and $\widetilde G(\cdot,\lambda)$ are the same. The following result provides an alternative characterization of the fixed points, whenever they exist - we only discuss the case of $\widetilde F(\cdot,\lambda)$, as the same discussion applies to $\widetilde G(\cdot,\lambda)$. However, it is not a priori clear if for every $\lambda\in \RR$, there exists a fixed point for $\widetilde F(\cdot,\lambda)$; see Remark~\ref{rem-Lambda}. To that end, define 
\begin{equation}\label{eq-Lambda} \Lambda \doteq \Big\{ \lambda\in \RR: \text{ a fixed point of $\widetilde F(\cdot,\lambda)$  exists and is finite}\Big\}\,.\end{equation} 

\begin{proposition}\label{prop-ESSP}
For $\lambda \in \Lambda$, under Assumption~\ref{assump-main},  there exists a unique function $V_\lambda:S\rightarrow \RR$ such that  $V_\lambda =\widetilde F(V_\lambda,\lambda)$. Moreover, the following hold. 
\begin{enumerate}
	\item [(i)] $V_{\lambda}(n) = 1$ if and only if $\lambda = \lambda^*$, where $\lambda^*$ is as defined in~\eqref{eq-ERSC-val}.
	\item [(ii)] An alternative characterization of $V_\lambda$ holds: for $1\leq i\leq n$,\begin{equation}\label{eq-V-lambda-def}
		V_{\lambda}(i) \doteq \inf_{v\in \Usm}\E_i^v \bigg[e^{\sum_{t=0}^{\tau_n - 1}\big(\delta c(X_t, v(X_t)) - \lambda \big)}\bigg] \,.\end{equation}
	Here, $\tau_n \doteq  \min\{t \geq 1: X_t = n\}$ with $i$-dependence suppressed.
\end{enumerate}

\end{proposition}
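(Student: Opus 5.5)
The plan is to treat $\widetilde F(\cdot,\lambda)$ as the Bellman operator of an \emph{exponential stochastic shortest path} problem with target state $n$. For a selector $v\in\Usm$, which exists by compactness of $\bU(i)$ and the continuity in Assumption~\ref{assump-main}, I would write its linear part as
\begin{equation*}
\widetilde F^v(V,\lambda) = M_v V + b_v,\qquad (M_v)_{ij} = e^{\delta c(i,v(i))-\lambda}p(i,j,v(i))\Ind_{\{j\neq n\}},\qquad (b_v)_i = e^{\delta c(i,v(i))-\lambda}p(i,n,v(i)).
\end{equation*}
Then $\widetilde F(V,\lambda)=\min_v (M_vV+b_v)$ componentwise. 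Fix $\lambda\in\Lambda$ and a finite fixed point $V$, and let $v$ attain the minimum in $\widetilde F(V,\lambda)$.

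\textbf{Step 1 (positivity and spectral radius).} First I would show $V>0$. Iterating $V=M_vV+b_v$ gives $V\ge b_v$ and $V\ge M_v^kb_v$ for every $k$. Irreducibility under $v$ lets every state reach $n$, so for each $i$ some $(M_v^kb_v)_i$ is positive; hence $V>0$. Next I would show $\rho(M_v)<1$. The nonnegative matrix $M_v$ has a strictly positive vector $V$ with $M_vV\le V$, and the inequality is strict at every state that can jump to $n$. Since all states reach $n$ under $v$, a Perron--Frobenius/Collatz--Wielandt argument, applied on each communicating class of the substochastic restriction, gives $\rho(M_v)<1$, so $M_v^T\to0$. \emph{This is the step I expect to be the main obstacle}, because $M_v$ restricted to $S\setminus\{n\}$ need not be irreducible. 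My first attempt would go through the reducible normal form: use that each class has an exit path to $n$, so strictness propagates along paths, and then show some power $M_v^k$ satisfies $M_v^kV\le(1-\epsilon)V$.

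\textbf{Step 2 (uniqueness).} Let $V_1,V_2$ be finite fixed points, and let $v_1$ be a minimizer for $V_1$. Then $V_1=M_{v_1}V_1+b_{v_1}$, while $V_2\le M_{v_1}V_2+b_{v_1}$. Subtracting gives $V_2-V_1\le M_{v_1}(V_2-V_1)$, and since $M_{v_1}\ge 0$ entrywise this iterates to $V_2-V_1\le M_{v_1}^T(V_2-V_1)\to0$ by Step 1. Exchanging the roles of $V_1$ and $V_2$ gives $V_1=V_2$.

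\textbf{Step 3 (representation (ii)).} For any $v'\in\Usm$, let $W_{v'}(i)$ denote the expectation in \eqref{eq-V-lambda-def} under $v'$. Conditioning on $X_1$ shows $W_{v'}=\sum_{k\ge0}M_{v'}^kb_{v'}$ (possibly infinite), and this is the minimal nonnegative solution of $W=M_{v'}W+b_{v'}$. For the minimizer $v$ of $V$, Step 1 gives $V=\sum_k M_v^kb_v=W_v$. For any $v'$ with $W_{v'}$ finite, $W_{v'}=M_{v'}W_{v'}+b_{v'}\ge \widetilde F(W_{v'},\lambda)$. Combining this with $\widetilde F(W_{v'},\lambda)\le M_vW_{v'}+b_v$ gives $V-W_{v'}\le M_v(V-W_{v'})$. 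Iterating and using $M_v^T\to0$ yields $V\le W_{v'}$; if $W_{v'}$ is infinite the inequality holds trivially. Hence $V=\inf_{v'}W_{v'}$, which is (ii).

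\textbf{Step 4 (part (i)).} Suppose $V_\lambda(n)=1$. Then $V_\lambda(j)$ may replace the coefficient of $p(i,n,u)$, so $V_\lambda$ is a positive solution of \eqref{eq-hjb} with $e^{\delta\lambda^*}$ replaced by $e^{\lambda}$. The uniqueness statement in Theorem~\ref{thm-hjb} then identifies $\lambda$ with the optimal cost, in the normalization used in $\widetilde F$, i.e.\ $\lambda=\lambda^*$. Conversely, given $\lambda=\lambda^*$, the normalized solution $V^*/V^*(n)$ of Theorem~\ref{thm-hjb} is a fixed point of $\widetilde F(\cdot,\lambda^*)$ with value $1$ at $n$. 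By the uniqueness in Step 2 it equals $V_{\lambda^*}$, so $V_{\lambda^*}(n)=1$.
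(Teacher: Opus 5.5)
\textbf{Step 3 contains a genuine error.} The inequality $V-W_{v'}\le M_v(V-W_{v'})$ is equivalent to $M_vW_{v'}+b_v\le W_{v'}$. That says the minimizer $v$ of $V$ is no worse than $v'$ when tested against $W_{v'}$, and nothing guarantees this. The two facts you combine, $W_{v'}\ge\widetilde F(W_{v'},\lambda)$ and $M_vW_{v'}+b_v\ge\widetilde F(W_{v'},\lambda)$, both bound $\widetilde F(W_{v'},\lambda)$ from above, so they cannot be chained.

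The comparison must be made with $v'$: $V=\widetilde F(V,\lambda)\le M_{v'}V+b_{v'}$, so $V\le\sum_{k<T}M_{v'}^kb_{v'}+M_{v'}^TV$. You then need $M_{v'}^TV\to0$, but Step 1 only controls powers of $M_v$, not of $M_{v'}$ for an arbitrary $v'$. One way to close this: set $A\doteq\{i:W_{v'}(i)<\infty\}$. The equation $W_{v'}=M_{v'}W_{v'}+b_{v'}$ forces $M_{v'}(i,j)=0$ for $i\in A$, $j\notin A$, and $W_{v'}>0$ on $A$. Hence $V\le CW_{v'}$ on $A$, and $(M_{v'}^TV)_i\le C(M_{v'}^TW_{v'})_i=C\sum_{k\ge T}(M_{v'}^kb_{v'})_i\to0$ for $i\in A$. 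This also handles the case you call trivial. $W_{v'}$ may be infinite at some states and finite at others, since a state may reach the states where $W_{v'}=\infty$ only through $n$. The componentwise inequality is still needed at the finite ones.

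\textbf{Step 1 assumes what it claims to prove.} The bounds $V\ge b_v$ and $V\ge M_v^kb_v$ come from $V=\sum_{k<T}M_v^kb_v+M_v^TV$ and require $V\ge0$. So positivity of an arbitrary real fixed point is not derived, and in fact it is false. Take two states, one action, $p(1,1)=a\in(0,1)$, $p(2,1)>0$, and $\kappa\doteq e^{\delta c(1)-\lambda}$ with $\kappa a>1$. Then $V(1)=\kappa(1-a)/(1-\kappa a)<0$, with $V(2)$ determined by the second equation, is a finite fixed point, while the expectation in \eqref{eq-V-lambda-def} is $+\infty$. Adding at state $1$ a second action with self-loop probability $a'$ and weight $\kappa'$, $\kappa'a'<1$, produces both a negative and a positive fixed point, so uniqueness in $\RR^n$ fails too. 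The statement therefore has to be read for positive fixed points, as $h_\lambda=\log V_\lambda$ in the paper presupposes, and you should assume $V>0$ rather than prove it. With that assumption, Step 1 is routine: $\mathrm{diag}(V)^{-1}M_v\,\mathrm{diag}(V)$ is substochastic with a deficient row reachable from every state. Step 2 is then correct.

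\textbf{Step 4 needs one more argument.} The uniqueness in Theorem~\ref{thm-hjb} concerns $V^*$ for the given constant $\lambda^*$, not the constant itself. So $V_\lambda(n)=1\Rightarrow\lambda=\lambda^*$ needs an extra step. One option is a Collatz--Wielandt comparison of the Perron roots of the irreducible matrices $\mathrm{diag}(e^{\delta c(\cdot,v(\cdot))})P_v$ for the minimizers of $V_\lambda$ and of $V^*$, where $P_v$ is the transition matrix under $v$. Another, which is what the paper does, is injectivity of $\lambda\mapsto V_\lambda(n)$ combined with $V_{\lambda^*}(n)=1$; injectivity follows from strict monotonicity, which is immediate from (ii) because $\tau_n\ge1$.

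The paper itself simply cites Theorem~5.1 of \cite{Cavazos-Cadena2002} for (ii). Your self-contained linear-algebraic route is a reasonable alternative once these points are repaired.
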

\begin{proof}
	Part (i) follows from the fact that $V_{\lambda^*}(n) = 1$ and that the map $\lambda \mapsto V_{ \lambda}$ is injective, and  Part (ii) follows from Theorem 5.1 of \cite{Cavazos-Cadena2002}.
\end{proof}
A few remarks are now in order.
\begin{remark}
Observe that from Theorem~\ref{thm-hjb} and the above proposition, whenever $V_\lambda(n)=1$ (or equivalently, $\lambda=\lambda^*$), we have $V_\lambda=V^*$, where $V^*$ is given by Theorem~\ref{thm-hjb}.
\end{remark}
\begin{remark}\label{rem-ESSP}
From Proposition~\ref{prop-ESSP}, it is clear that $V_\lambda$ can be interpreted as the infimum of  the  exponential cost  \begin{equation*}\E_i^v \Big[e^{\sum_{t=0}^{\tau_n - 1}\big(\delta c(X_t, v(X_t)) - \lambda \big)}\Big], \end{equation*} over all $v\in \Usm$,  incurred while the DTMC $X$ starts at state $i$ and enters state $n$, for the first time. In other words, this means that $v^*\in \Usm $ for which the infimum in~\eqref{eq-V-lambda-def} is attained (which exists by Assumption~\ref{assump-main})  corresponds to the exponentially stochastic shortest path from state $i$ to state $n$, while varying over all $v\in \Usm$. The analogous connection between the risk-neutral stochastic shortest path problem and risk-neutral RVI algorithms was thoroughly investigated in \cite{Bertsekas_VI_98}. This problem was also studied in \cite{Cavazos-Cadena2002}, although the authors referred to it as the ``auxiliary expected-total cost" problem. 
\end{remark}
\begin{remark}\label{rem-Lambda} 	From the alternative characterization in Proposition~\ref{prop-ESSP}, due to the exponential nature of the cost, it is more apparent that $V_\lambda$ may not exist for every $\lambda$, \emph{i.e.}, $V_\lambda(i)$ may be infinite for some $1\leq i \leq n$ and $\lambda\in \RR$. This illustrates  clearly the reason for defining $\Lambda$ in~\eqref{eq-Lambda}.				\end{remark}

We remark that much of the analysis that follows is for operator $F$, as many of these techniques can be implemented for operator $G$ as well, in conjunction with Lemma~\ref{lem-bound-G} and Proposition~\ref{prop-contraction-GS}  from Section~\ref{sec-proof-GS}.

From Theorem~\ref{thm-hjb}, we make the following trivial, but important observation: for that we set $h^*=\log V^*$ and take the logarithm on both sides of~\eqref{eq-hjb} and~\eqref{eq-optimality}. It is clear that~\eqref{eq-hjb} and~\eqref{eq-optimality}, respectively become
\begin{equation}\label{eq-hjb-log}
h^*(i) = \min_{u \in \mathbb{U}(i)} \bigg[\delta c(i,u)+\log \Big(\sum_{j =1}^ne^{h^*(j)}p(i,j,u)\Big)\bigg]  -\delta\lambda^*,\qquad \qquad \qquad
\end{equation}
\begin{equation}\label{eq-optimality-log}
\min_{u \in \mathbb{U}(i)} \bigg[\delta c(i,u)+\log \Big(\sum_{j =1}^ne^{h^*(j)}p(i,j,u)\Big)\bigg] =\delta c(i,v(i))+\log \Big(\sum_{j =1}^ne^{h^*(j)}p(i,j,v(i))\Big),
\end{equation}		
for $1\leq i\leq n$. From the above, it is clear that $v\in \Usm$ satisfies~\eqref{eq-optimality} if and only if it satisfies~\eqref{eq-optimality-log}.  Hence, to characterize the optimal stationary Markov policies associated with our ERSC problem, we can equivalently work with~\eqref{eq-hjb-log} and~\eqref{eq-optimality-log}. For this reason, instead of investigating operators $\widetilde F$ and $\widetilde G$, we investigate the  operators $F,G:\RR^n\rightarrow \RR^n$, also referred to as risk-sensitive Bellman operators, given by 
{\begin{align}\label{eq-hjb-op}
	&   F(h,\lambda) \doteq \log \widetilde F(e^{h}, \lambda ),\\\label{def-G}
	&\begin{aligned}
		G(h,\lambda) \doteq \log \widetilde G(e^h, \lambda )\,.
	\end{aligned}
	\end{align}}
Using the operators $F,G$ and setting $h^k\doteq \log V^k$, Algorithm~\ref{alg-rvi-bertsekas-JI} can be equivalently expressed as  
\begin{equation}\label{eq-F-itr}
	\begin{aligned}
		h^{k+1} &=  F(h^k,\lambda^k),\\
		\lambda^{k+1}&= \lambda^k + {\frac{1}{\delta}} \gamma_k h^{k+1}(n), 
	\end{aligned}
\end{equation}
and Algorithm~\ref{alg-rvi-bertsekas-GS} can be expressed as 
\begin{equation}\label{eq-G-itr}
	\begin{aligned}
		h^{k+1} &= G(h^k,\lambda^k), \\
		\lambda^{k+1}&= \lambda^k +  {\frac{1}{\delta}}\gamma_kh^{k+1}(n)\,.
	\end{aligned}
\end{equation}

\subsection{Main result}\label{sec-main-result}
We are now in a position to state our first main result which concerns the convergences of Algorithms~\ref{alg-rvi-bertsekas-JI} and~\ref{alg-rvi-bertsekas-GS}. Before we proceed to do this, we introduce a weighted supremum norm that turns out to be fundamental in proving these convergences. 
We follow the construction from Pg. 293 of \cite{tseng1990solving}.
First, we consider a finite partition $\{S_k\}_{1\leq k\leq l}$ of the state space $S = \{1,\dots, n\}$ defined as follows:
\begin{equation}\nonumber
	S_1 \doteq \{n\},\quad 	S_k \doteq \Big\{i\in S: i\notin \cup_{r \leq k - 1}S_r \text{ and } \min_{u\in \bU(i)}\max_{j \in S_1 \cup \dots \cup S_{k-1}} p(i,j,u) > 0\Big\}\,.
\end{equation}
The non-emptiness of $S_k$, for $k>1$, is due to the irreducibility of the DTMC $X$ in Assumption~\ref{assump-main}. 
For any state $i$, let $1\leq k(i)\leq l$ be such that $i\in S_{k(i)}.$ We now define the relevant quantities which will be used to construct our weighted supremum norm. First, we define $\eta$ to be the \emph{smallest, strictly positive }transition probability, that is,  
{ 
	\begin{equation}\label{eq-transition_prob_const}
		\eta \doteq \min \big\{\overline p(i,j)\,:\overline p(i,j)>0, \, i,j\in S \big\}.
	\end{equation}
	where $\overline p(i,j) \doteq \inf\{p(i,j,u):u\in \bU\}$.}
{Since the minimum in \eqref{eq-transition_prob_const} is over a finite set, it is clear that $\eta$ is strictly positive if and only if $\overline p(i,j)$ is strictly positive for some $(i,j)$. However, this must clearly hold by the following argument. 
	Suppose for contradiction that there exists a $j\in S$ such that $\overline p(i,j) = 0$ for all $i\ne j$. By continuity of $u\mapsto p(i,j,u)$ and compactness of $\mathbb U(i)$, it is clear that there exists $u_{i,j}$ such that $p(i,j,u_{i,j})=0.$ Now, if we define a stationary policy $\pi^*$ such that $\pi^*(i) = u_{i,j}$ whenever $i\ne j$, then it is straightforward to see that $j$ is unreachable from any $i\ne j$ under $\pi^*,$ which implies that the process $X$ under $\pi^*$ is reducible, and thus contradicts Assumption~\ref{assump-main}. }

Using $\eta$ and the partition $\{S_k\}_{1\leq k\leq l}$ defined above, we define  weights $\{w^m_i\}_{1\leq i\leq n}$ as follows: 
\begin{equation}\label{eq-weight_vector}
	w_i^m \doteq 1 - (\eta e^{-2m})^{2 k(i)},  \quad \text{ for } \,\, i < n \,\, \text{ with } \,\, w_n^m=1,\end{equation}
\begin{equation}\label{eq-contraction_const}
	\beta_m \doteq \frac{1 - (\eta e^{-2m})^{2l - 1}}{1 - (\eta e^{-2m})^{2l}},
\end{equation}
for every $m>0$.	 Since $\eta \in (0,1)$, it follows that $\beta_m \in (0,1)$.  	 The weighted supremum norm is then defined as follows: for $x\in \RR^n$,
\begin{equation}\label{def-norm}
	\|x\|_m\doteq \max_{1\leq i\leq n}\frac{|x_i|}{w_i^m}\,.
\end{equation}
From the definition of $w^m_i$ in~\eqref{eq-weight_vector}, it is clear that $w^m_i>w^m_{1}>w^0_1$ as $i=1\in S_2$. This means that for $x\in \RR^n$, 
\begin{equation}\label{norm-rel}
	\|x\|_\infty\leq \|x\|_m\leq \frac{1}{w^0_1}\|x\|_\infty\,.
\end{equation}
Let 
\begin{equation}\label{eq-c-r}
	\underline c\doteq \min_{(i,u)\in S_{\bU}} c(i,u)\quad\text{and}\quad \overline c\doteq \max_{(i,u)\in S_{\bU}}c(i,u)\,.
\end{equation}
From the definition of $\lambda^*$ in~\eqref{eq-ERSC-val}, it is clear that $\underline c\leq \lambda^*\leq \overline c\,.
$
Hence, we simply choose the initial condition of Algorithms~\ref{alg-rvi-bertsekas-JI} and~\ref{alg-rvi-bertsekas-GS}, $(V^0,\lambda^0)$ to be such that $\underline c\leq \lambda^0\leq \overline c$.  

\begin{theorem}\label{thm-main-1} Suppose  that $\{( V^k,\lambda^k)\}_{k\in\NN}$ is the sequence of iterate pairs of either Algorithm~\ref{alg-rvi-bertsekas-JI}  or Algorithm~\ref{alg-rvi-bertsekas-GS} and  $h^k\doteq \log V^k$. 
	Under Assumption~\ref{assump-main} and the condition that $ \lambda^*\leq \lambda^0\leq \overline c$, there exist constants $m>0$ (depending on $(h^0,\lambda^0)$), {$\overline \gamma >0,$}
	and positive functions  $c_h(\gamma,m)$,  $c_\lambda(\gamma,m)$  and $L(m)$ with the following property: for $k\geq 1$, whenever
	{ 	$\gamma_k \in (0,\overline \gamma]$}
	we have $0< c_h(\gamma_k, m) ,c_\lambda(\gamma_k, m)<1,$ and 
	\begin{equation}\label{eq-itr-contraction-1}
		\begin{aligned}
			\|h^{k+1}-h^*\|_{ m}& \leq c_h(\gamma_k, m) \|h^{k}-h^*\|_{ m} + L(m) |\lambda^k-\lambda^*|, \\
			|\lambda^{k+1}-\lambda^*|&\leq c_\lambda (\gamma_k, m) |\lambda^k-\lambda^*|\,.
		\end{aligned}
	\end{equation}
	In particular, there exists some constant $\varrho=\varrho(m)>1$ (depending on $ m$ such that $\varrho(m)\to 1$ as $m\to\infty$) such that
	$$ \lim_{k\to\infty} \varrho^k \big( \|V^k-V^*\|_{ m} + |\lambda^k-\lambda^*|\big)=0\,.$$
\end{theorem}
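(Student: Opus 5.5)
Work in the logarithmic variables, i.e.\ with the iterations \eqref{eq-F-itr} and \eqref{eq-G-itr}, and write $h_\lambda\doteq\log V_\lambda$ for $\lambda\in\Lambda$, so that $h_{\lambda^*}=h^*$ and $h^*(n)=0$ by Proposition~\ref{prop-ESSP}. Two auxiliary facts are needed first.

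\textbf{(a) Local contraction.} For every $R>0$ there is $m=m(R)$ such that
\[
\|F(h_1,\lambda)-F(h_2,\lambda)\|_m\le\beta_m\|h_1-h_2\|_m
\]
whenever $\|h_1\|_\infty,\|h_2\|_\infty\le R$. To prove it, use the entropy variational formula
\[
\log\sum_j e^{h(j)}q(j)=\max_{\mu\ll q}\Big[\textstyle\sum_j\mu(j)h(j)-D(\mu\|q)\Big],
\]
applied to the kernel with column $n$ removed (i.e.\ $h(n)$ is set to $0$). Then $F_i(h_1,\lambda)-F_i(h_2,\lambda)\le\sum_{j<n}\mu(j)(h_1-h_2)(j)$, where $\mu$ is the Gibbs maximizer. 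Its weights satisfy $\mu(j)\ge p(i,j,u)e^{-2R}$, which is at least $\eta e^{-2m}$ if $m\ge R$. The Tseng-type weights \eqref{eq-weight_vector} then give the factor $\beta_m$, because each $i\in S_k$ puts mass at least $\eta e^{-2m}$ on $S_1\cup\dots\cup S_{k-1}$, where the weights are smaller.

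\textbf{(b) Local bi-Lipschitz property.} For $\lambda,\lambda'$ in a neighbourhood $[\lambda^*-\epsilon,\overline c]\subset\Lambda$ we want
\[
a\,|\lambda-\lambda'|\le|h_\lambda(n)-h_{\lambda'}(n)|,\qquad \|h_\lambda-h_{\lambda'}\|_\infty\le L|\lambda-\lambda'|,
\]
and moreover $\lambda\mapsto h_\lambda(n)$ is decreasing. Monotonicity comes from \eqref{eq-V-lambda-def}, since $\tau_n\ge1$. The lower constant is $a\ge1$, because $\tau_n\ge1$ gives $V_\lambda\ge e^{\lambda'-\lambda}V_{\lambda'}$ for $\lambda<\lambda'$. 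The upper bound is the delicate part. Subtract the fixed-point equations and apply the variational representation along paths up to $\tau_n$. This yields a tilted chain $Q$ built from the Gibbs maximizers, for which Dynkin's formula gives $|h_\lambda-h_{\lambda'}|\le|\lambda-\lambda'|\,\E^Q[\tau_n]$. We then bound $\E^Q[\tau_n]$ uniformly: the tilted one-step probabilities are at least $\eta e^{-2R}$ along the partition $\{S_k\}$, so $Q(\tau_n>l)\le 1-(\eta e^{-2R})^{l}$, and geometric tails follow. Existence of $h_\lambda$ on such an interval, with a uniform sup bound $R$, follows from $\lambda\le\overline c$ and from finiteness at $\lambda^*$ combined with monotonicity and continuity.

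\textbf{Main argument.} First choose $R$ large enough to dominate $\|h^0\|_\infty$ and $\sup_\lambda\|h_\lambda\|_\infty$ over the relevant interval, and set $m=m(R)$. Then prove by induction (this is the analogue of Proposition~\ref{prop-c-alpha0}) that $\lambda^k\in[\lambda^*,\overline c]$ and $\|h^k\|_\infty\le R$ for all $k$. For the $h$-bound, note that $h^*=h_{\lambda^*}$ and $h^*=F(h^*,\lambda^*)$. Since $F(h,\lambda)=F(h,\lambda^*)-(\lambda-\lambda^*)$, we get
\[
\|h^{k+1}-h^*\|_m\le\beta_m\|h^k-h^*\|_m+\tfrac{1}{w_1^0}|\lambda^k-\lambda^*|,
\]
so $c_h=\beta_m$ and $L(m)=1/w^0_1$.

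For $\lambda$, write $h^{k+1}(n)=h^{k+1}(n)-h_{\lambda^k}(n)+h_{\lambda^k}(n)$. The term $h_{\lambda^k}(n)$ is negative and of size between $a|\lambda^k-\lambda^*|$ and $L|\lambda^k-\lambda^*|$, so the step $\lambda^k+\gamma_k\delta^{-1}h_{\lambda^k}(n)$ contracts with factor $1-\gamma_k a/\delta$, provided $\gamma_k\le\overline\gamma<\delta/L$. This condition also keeps the iterate from overshooting below $\lambda^*$. The mismatch $h^{k+1}-h_{\lambda^k}$ is handled by a coupled Lyapunov quantity such as $\|h^k-h_{\lambda^k}\|_m+C|\lambda^k-\lambda^*|$, or more simply by contracting $\|h^{k+1}-h_{\lambda^k}\|_m\le\beta_m\|h^k-h_{\lambda^k}\|_m$ and controlling the drift of $h_{\lambda^k}$ by $L|\lambda^{k+1}-\lambda^k|$. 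The result is a two-dimensional linear recursion whose matrix has spectral radius below $1$ for small $\gamma$. This yields \eqref{eq-itr-contraction-1} with a suitable $c_\lambda$, and the geometric rate $\varrho$ with $\varrho\to1$ as $\beta_m\to1$.

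\textbf{Gauss--Seidel and main obstacle.} For $G$ the steps are the same, with (a) replaced by Proposition~\ref{prop-contraction-GS} together with uniform componentwise bounds on $G$. I expect the main obstacle to be (b), the uniform return-time bound under the tilted chain $Q$, together with the invariance induction ensuring that $h^k$ stays bounded and $\lambda^k$ stays in $\Lambda$.
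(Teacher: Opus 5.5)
Parts (a) and (b) of your proposal follow the paper's route. Part (a) uses the variational formula plus Tseng weights, as in Proposition~\ref{thm-contraction}. Part (b) uses a tilted chain, Dynkin's formula and a geometric tail for the return time, as in Proposition~\ref{lem-bounds-h}. Two of your shortcuts are real simplifications. First, $a\ge1$ follows at once from $\tau_n\ge1$ in \eqref{eq-V-lambda-def}. Second, the identity $F(h,\lambda)=F(h,\lambda^*)-(\lambda-\lambda^*)e$ gives the first line of \eqref{eq-itr-contraction-1} with $c_h=\beta_m$ and $L(m)=\|e\|_m$, as soon as $\|h^k\|_\infty,\|h^*\|_\infty\le m$.

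\textbf{The invariance step fails as stated.} Your invariant $\lambda^k\in[\lambda^*,\overline c]$ for all $k$ is false. Take $\lambda^0=\lambda^*$ and $h^0=h^*-c\,e$ with $c>0$. Irreducibility forces $p(n,n,u)<1$ for every $u$, so $h^1(n)=F_n(h^0,\lambda^*)<F_n(h^*,\lambda^*)=0$ and $\lambda^1<\lambda^*$ for every $\gamma_0>0$. Similarly, $\lambda^1>\overline c$ if $\lambda^0=\overline c$ and $h^0$ is large. Your no-overshoot remark covers only the idealized step $\lambda^k+\gamma_k\delta^{-1}h_{\lambda^k}(n)$. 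The mismatch $h^{k+1}(n)-h_{\lambda^k}(n)$ has either sign and size up to $\beta_m\|h^k-h_{\lambda^k}\|_m$. It can therefore carry $\lambda^{k+1}$ below $\lambda^*$, where $h_\lambda$ may be large or may not exist. The paper handles this with Proposition~\ref{coro-connectivity_Lambda}(iii), which gives an explicit interval below $\lambda^*$ inside $\Lambda_m$. It also uses Proposition~\ref{prop-c-alpha0}, a lower bound on $h^{k+1}(n)$ through $\varkappa(m)$ combined with the step-size cap \eqref{eq-gamma-cond}. You have no substitute for either. ``Finiteness at $\lambda^*$ plus monotonicity and continuity'' only propagates finiteness upward, and does not show that $\lambda^*$ is interior to $\Lambda$. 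Likewise, $\|h^k\|_\infty\le R$ does not close by fixing $R$ and then taking $m=m(R)$. Iterating your $h$-inequality puts, at best, a factor of order $1/(1-\beta_m)$ in front of $|\lambda^0-\lambda^*|$. Since $1/(1-\beta_m)$ grows exponentially in $m$, enlarging $R$ does not help. You need to bound $h^k$ through $h_{\lambda^k}$ plus a coupled error that you show is non-increasing, with a size that does not deteriorate as $m$ grows.

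\textbf{The coupled recursion cannot give the decoupled $\lambda$-bound.} A two-dimensional recursion cannot yield the second line of \eqref{eq-itr-contraction-1}. Since $\lambda^{k+1}-\lambda^*=\lambda^k-\lambda^*+\gamma_k\delta^{-1}F_n(h^k,\lambda^k)$, any $k$ with $\lambda^k=\lambda^*$ but $F_n(h^k,\lambda^*)\neq0$ gives $\lambda^{k+1}\neq\lambda^*$. This does happen for $k\ge1$. Take $n=2$, a single action, $p(1,2)=1$, $p(2,1)=p(2,2)=\tfrac12$, $c(1)=0$, $c(2)=1$ and $\delta=1$. Pick $\lambda^0>\lambda^*$ close to $\lambda^*$ and $h^0(1)$ so that $\lambda^1=\lambda^*$. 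Then $h^1(1)=-\lambda^0$ and $h^2(2)=1+\log\frac{1+e^{-\lambda^0}}{2}-\lambda^*<h^*(2)=0$. What your recursion in $x_k=|\lambda^k-\lambda^*|$ and $y_k=\|h^k-h_{\lambda^k}\|_m$ actually gives is geometric decay of a weighted combination such as $\max\{x_k,\theta y_k\}$. This is the analogue of \eqref{eq-c-alpha2} in Proposition~\ref{prop-c-alpha}. Your route is cleaner than the paper's case analysis with $\overline\lambda,\underline\lambda,\widehat\lambda$, and it suffices for the final $\varrho^k$ statement. However, the $\lambda$-estimate must be stated in that coupled form, and it is only valid once the invariance above has been established.
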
	
\begin{remark}\label{rem-init} The condition $\lambda^0\geq \lambda^*$ can be easily satisfied - simply choose a $v\in\Usm$, $1\leq i\leq n$ and compute  $\cE^\delta_i(v)$. Then,  by definition of $\lambda^*$, $\lambda^0\doteq \cE^\delta_i(v)\geq \lambda^*$. 
\end{remark} 

{
	\begin{remark}\label{rem-conv-compare}
		We now discuss how the geometric convergence result given for Algorithms~\ref{alg-rvi-bertsekas-JI} and~\ref{alg-rvi-bertsekas-GS} in Theorem~\ref{thm-main-1} compares with the other geometric convergence results in the literature for the existing RVI Algorithm. For the discussion below, we let Assumption~\ref{assump-main} hold, and also assume the control sets $\mathbb U(i)$ are all finite. 
		
		For this paragraph only, let $\{(\widetilde V^k,\tilde \lambda^k)\}_{k\ge 0}$ denote the iterates of the existing RVI algorithm. In \cite{Bielecki_1999}, under the additional condition that $\inf_{u\in \mathbb U(i) }p(i,i,u)>0$ for each $i\in S$, the authors prove that there exist $\ell\in \NN$ and $\varsigma\in (0,1)$ such that  
		\begin{equation}\label{eq-Bielecki-geom-conv}
			\begin{aligned}
				\| \log \widetilde V^{\ell} - \log V^* \|_{sp} &\le 2\varsigma \| \log \widetilde V^0 - \log V^*\|_{sp}, \\
				| \tilde \lambda^{\ell} - \lambda^*| &\le \varsigma \| \log \widetilde V^0 - \log V^*\|_{sp}\,.
			\end{aligned}
		\end{equation}
		Here, $V^*$ is normalized so that $V^*(n) = 1$, $\varsigma$ is equal to the Birkhoff coefficient of a certain matrix (which depends on the transition kernel, sensitivity parameter, and running cost; see Theorem 2.2 in \cite{Bielecki_1999}), and $\| \cdot\|_{sp}$ denotes the span semi-norm. It is straightforward to see how by repeatedly  applying \eqref{eq-Bielecki-geom-conv} and replacing $(\widetilde V^0, \tilde \lambda^0)$ with $ (\widetilde V^k, \tilde \lambda^k)  $ for any $k$, we have  that $(\widetilde V^k,\widetilde \lambda^k)\rightarrow (V^*,\lambda^*)$ geometrically. This geometric convergence result was also given in \cite{Cavazos_2003}, where the authors removed the extra condition at the start of the paragraph by working with a transformed version of the running cost and transition matrix. From here, we see that the first difference between \eqref{eq-itr-contraction-1} and~\eqref{eq-Bielecki-geom-conv} lies in the fact that the former is a one-step contraction with respect to a certain weighted supremum norm, while the latter is a multi-step contraction with respect to the span semi-norm. Furthermore, the contraction constants $c_h,c_\lambda$ in~ \eqref{eq-itr-contraction-1} depend  on the step-sizes $\gamma_k$ and the value $m$ (which also depends on $h^0,\lambda^0$, $h_{\lambda^0}$, and $h^*$), in addition to the ERSC problem parameters, \emph{i.e.,} running cost, sensitivity parameter, and transition matrix.
	\end{remark} 
}

We end this section with the sketch of the proof of Theorem~\ref{thm-main-1} and defer the detailed proof to Section~\ref{sec-proof-main-1} and Appendix~\ref{app-c-alpha2}. Recall that  Algorithms~\ref{alg-rvi-bertsekas-JI} and~\ref{alg-rvi-bertsekas-GS}, in terms of $h^k=\log V^k$, are expressed according to~\eqref{eq-F-itr} and~\eqref{eq-G-itr}, respectively.  
The main properties given below (which are proved in Section~\ref{sec-auxiliary}) form the building blocks of the proof: 

\begin{enumerate}
	\item [(P1)]  Local contraction property of $F(\cdot,\lambda)$  (see Proposition~\ref{thm-contraction}) and $G(\cdot,\lambda)$ (see Proposition~\ref{prop-contraction-GS}) under the weighted supremum norm defined in~\eqref{def-norm}.
	\item [(P2)] Lipschitz continuity  of $F(h,\cdot)$ and $G(h,\cdot)$: for $F(h,\cdot)$, we obtain this continuity globally.  However,  due to the iterative nature in the definition of $G(h,\lambda)$, we only obtain this continuity locally; see Proposition~\ref{prop-contraction-GS}.
	\item [(P3)] Local bi-Lipschitz continuity of the map $\lambda\mapsto h_\lambda(n)$, whenever $F(\cdot,\lambda)$  has a unique fixed point $h_\lambda$: more precisely, we show that both $\lambda \mapsto h_\lambda$ and its inverse  are locally Lipschitz in  Proposition~\ref{lem-bounds-h}. 	\end{enumerate}
In the sketch below, we mainly discuss the case where $\{(V^k,\lambda^k)\}_{k\in \NN}$ is a sequence of iterates of Algorithm~\ref{alg-rvi-bertsekas-JI} as the sketch of the proof for Algorithm~\ref{alg-rvi-bertsekas-GS} follows similar arguments from hereon. 
To illustrate the main idea behind the proof, we first discuss the local one-iteration analysis below, \emph{i.e.}, given that $(h^k,\lambda^k)$ is close to $(h_{\lambda^*},\lambda^*)$ for some $k\in \NN$, does  $(h^{k+l},\lambda^{k+l})$ converge to $(h_{\lambda^*},\lambda^*)$, as $l\to\infty$?  Suppose for an initialization of Algorithm~\ref{alg-rvi-bertsekas-JI} and $k\in \NN$, we have $\lambda^k\simeq\lambda^*$, $\lambda^k\leq \lambda^*$ and $h^k\simeq h_{\lambda^*}$ ($x\simeq y$ means that $x$ is very close to $y$). Then, Property (P2)  gives us $h^{k+1}=F(h^k,\lambda^k)\simeq F(h^k,\lambda^*)$. Since $F(\cdot, \lambda^*)$ is a local contraction  and $h_{\lambda^*} $ is the unique fixed point (see Property (P1)), $\|h^{k+1}-h_{\lambda^*}\|$ is strictly smaller than $\|h^k-h_{\lambda^*}\|$. Turning to $\lambda^{k+1}-\lambda^*$, from the first part of Property (P3), we have $h_{\lambda^k}\simeq h_{\lambda^*}\simeq h^k$ which means that the continuity of $F(\cdot,\lambda^k)$ (in fact, it is a local contraction; Property (P1)) gives us  $ h^{k+1}= F(h^k,\lambda^k)\simeq F(h_{\lambda^k},\lambda^k)\simeq h_{\lambda^k}\,.$
From the second part of Property (P3), monotonicity of $\lambda\mapsto h_\lambda(n)$ and the fact that $h_{\lambda^*}(n)=0$, we have $ h^{k+1}(n)\simeq h_{\lambda^k}(n)\gtrsim \underline C( \lambda^*-\lambda^k)$ (here, $x\gtrsim y$ means that $y-x$ less than a small positive number), for some constant $\underline C>0$. On the other hand, from the first part of Property (P3), we also have $h^{k+1}(n)\lesssim \overline C(\lambda^*-\lambda^k)$, for some constant $\overline C>0$ (here, $x\lesssim y $ means that  $x-y$ is less than a small positive  number).  
Combining this with the recursion for $\lambda^k$ and choosing $\gamma_k$ from an appropriate closed interval, it  will be shown to ensure that $|\lambda^{k+1}-\lambda^*|$     is strictly smaller than $\lambda^*-\lambda^k$. 

Similarly,  we can argue that if  $\lambda^k\simeq\lambda^*$, $\lambda^k\geq \lambda^*$ and $h^k\simeq h_{\lambda^*}$, then  $\|h^{k+1}-h_{\lambda^*}\|$ is strictly smaller than $\|h^k-h_{\lambda^*}\|$ and $|\lambda^{k+1}-\lambda^*|$ is strictly smaller than $\lambda^k-\lambda^*$. Iterating the above argument indefinitely establishes the local convergence. It turns out that the precise argument is robust enough to relax the above assumption of $\lambda^k\simeq\lambda^*$ and $h^k\simeq h_{\lambda^*}$, and to be used in the proof of convergence of Algorithm~\ref{alg-rvi-bertsekas-JI}.  

Due to the `local' nature of the above properties, it is essential to ensure that the constants involved in the above arguments do not deteriorate in the subsequent iterations. This is a very subtle issue owing to the fact that in general, $\RR\setminus\Lambda\neq \emptyset$, which is addressed in   Proposition~\ref{prop-c-alpha0}.

	\section{Properties of the risk-sensitive Bellman operator $F(\cdot,\lambda)$}\label{sec-auxiliary}

In this section, we establish the following key results regarding the risk-sensitive Bellman operator $F(\cdot,\lambda)$: 
\begin{enumerate}
	\item [(i)] For every $\lambda\in {\RR}$, we establish a local contraction property of operator $F(\cdot,\lambda)$. This, in turn, will help us in establishing the existence and uniqueness of a $\RR^n$--valued vector $h_\lambda$  such that  $h_\lambda= F(h_\lambda,\lambda)$.
	\item [(ii)] For every $\lambda\in \Lambda$ for $\Lambda$ in \eqref{eq-Lambda}, we establish the local Lipschitz continuity of the map $\lambda \mapsto h_\lambda$ and also that of its inverse. 
\end{enumerate} 
The existing approaches in proving (local) contraction properties involve showing that the multiplicative Bellman operator is a (local) contraction in the span-norm, and then normalizing the operator output at state $n$ to be $1$; see \cite{Bielecki_1999}. 
However, these methods do not extend to our case because $F(h,\lambda)$ is not fixed. The techniques in \cite{Bertsekas_VI_98} are amenable for adaptation to our case and as mentioned already,  this is the reason for investigating $F$ instead of $\widetilde F$. However, this is not straightforward  due to the logarithm term in the definition of $F(h,\lambda)$ in \eqref{eq-hjb-op}.  This can be handled using the entropy variational formula below. 
		
\subsection{Variational representation of the risk-sensitive Bellman operator $F(\cdot, \lambda )$} \label{sec-sub-ent}
In this section, we apply the entropy variational formula (see Proposition~\ref{prop-ent-var} in Appendix~\ref{app-ent-var}) to the operator $F(\cdot,\lambda)$, which will  help us re-write the logarithm term in the definition of $F$ in a form that is important for our analysis.

\begin{lemma}\label{cor-F-var-rep}
For any $h\in \RR^n$, $\lambda\in \RR$ and $1\leq i\leq n$, we have  
\begin{equation}\label{eq-F-var-rep} F_i(h,\lambda) = \min_{u\in \bU(i)}\bigg[\delta c(i,u) + \sup_{q(i,\cdot,u) \in \mathcal{P}(S)}\bigg(\sum_{j=1}^{n-1}h(j)q(i,j,u)- R\big(q(i,\cdot, u) || \, p(i,\cdot, u)\big) \bigg) \bigg]-\lambda\,.\end{equation}
Here, $R\big(q(i,\cdot, u) || \, p(i,\cdot, u)\big) = \sum_{j =1}^nq (i,j,u)\log \frac{q(i,j,u)}{p(i,j,u)}\,.$
\end{lemma}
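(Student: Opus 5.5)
The plan is a direct computation from the definition $F(h,\lambda)=\log \widetilde F(e^h,\lambda)$ in \eqref{eq-hjb-op}, followed by the entropy variational formula (Proposition~\ref{prop-ent-var}). First I will introduce the auxiliary vector $g\in\RR^n$ given by $g(j)\doteq h(j)$ for $j<n$ and $g(n)\doteq 0$. With it, the bracket in \eqref{eq-F-tilde} evaluated at $V=e^h$ becomes
\begin{equation*}
e^{\delta c(i,u)-\lambda}\sum_{j=1}^n e^{g(j)}p(i,j,u).
\end{equation*}
This expression is strictly positive, because $p(i,\cdot,u)$ is a probability vector. It is also continuous in $u$ on the compact set $\bU(i)$, by Assumption~\ref{assump-main}(1). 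Hence the minimum over $u$ is attained and is strictly positive.

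Since $\log$ is continuous and strictly increasing on $(0,\infty)$, it commutes with the minimum over $u\in\bU(i)$. This gives
\begin{equation*}
F_i(h,\lambda)=\min_{u\in\bU(i)}\Big[\delta c(i,u)+\log\sum_{j=1}^n e^{g(j)}p(i,j,u)\Big]-\lambda,
\end{equation*}
where the constant $-\lambda$ has been pulled outside the minimum.

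Next, for each fixed $(i,u)$, I will apply the entropy variational formula on the finite set $S$ with reference measure $p(i,\cdot,u)$ and bounded test function $g$:
\begin{equation*}
\log\sum_{j=1}^n e^{g(j)}p(i,j,u)=\sup_{q\in\mathcal P(S)}\Big(\sum_{j=1}^n g(j)q(j)-R(q\,\|\,p(i,\cdot,u))\Big).
\end{equation*}
Because $g(n)=0$, the sum $\sum_{j=1}^n g(j)q(j)$ reduces to $\sum_{j=1}^{n-1}h(j)q(j)$. Writing $q=q(i,\cdot,u)$ then yields exactly \eqref{eq-F-var-rep}.

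The only point needing care is the convention for the relative entropy. One takes $R=+\infty$ whenever $q\not\ll p(i,\cdot,u)$, and uses $0\log 0=0$. With this convention, measures that are not absolutely continuous contribute $-\infty$ and can be discarded. The supremum is then attained at the tilted measure
\begin{equation*}
q^*(j)=\frac{e^{g(j)}p(i,j,u)}{\sum_k e^{g(k)}p(i,k,u)},
\end{equation*}
and the identity holds. If Proposition~\ref{prop-ent-var} is stated only abstractly, I would verify the finite case directly. Substituting $q^*$ attains the value $\log\sum_j e^{g(j)}p(i,j,u)$. Conversely, for any $q\ll p(i,\cdot,u)$ one has $\sum_j g(j)q(j)-R(q\|p)=\log\sum_j e^{g(j)}p(i,j,u)-R(q\|q^*)$, and $R(q\|q^*)\ge0$. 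I do not expect any genuine obstacle beyond this bookkeeping.
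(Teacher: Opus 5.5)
Your proposal is correct and follows the paper's proof: the paper likewise introduces $\tilde h$ (your $g$) with $\tilde h(n)=0$, rewrites $F_i(h,\lambda)=\min_{u\in\bU(i)}\big[\delta c(i,u)+\log \E^u_i[e^{\tilde h(X_1)}]\big]-\lambda$, and applies Proposition~\ref{prop-ent-var}, dropping the $j=n$ term because $\tilde h(n)=0$. Your remarks on attainment of the minimum and the direct finite check via $R(q\,\|\,q^*)\ge 0$ are correct but not needed once Proposition~\ref{prop-ent-var} is invoked.
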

\begin{remark}\label{rem-suppress} Observe that  the supremum in~\eqref{eq-F-var-rep} can be equivalently written as 
\begin{equation} \sup_{q \in \mathcal{P}(S)}\Big(\sum_{j=1}^{n-1}h(j)q(j) -R\big(q(\cdot ) || p(i, \cdot, u)\big) \Big)\,.\end{equation}
However, we choose to write this as the supremum over $q(i,\cdot,u)\in \calP(S)$ (in~\eqref{eq-F-var-rep} and in what follows)  to aid the reader in keeping track of the underlying dependency of the optimal probability measure $q\in \calP(S)$ on $(i,u)$. 
\end{remark}
\begin{proof}
	To begin with, for any $h\in \RR^n$ and $\lambda\in {\RR}$, define $\tilde h\in \RR^n$ as $\tilde{h}(j) \doteq  h(j) $ for $1\leq j\leq n-1$ and $0$ for  $j=n$.  Then, the expression defining $F(h,\lambda)$ in~\eqref{eq-hjb-op} becomes
	\begin{equation}\label{eq-F-alt-rep} F_i(h,\lambda )= \min_{u\in\bU(i)}\Big[\delta c(i,u)+ \log \E^u_i[e^{\tilde{h}(X_1)}]\Big]-\lambda, \quad \text{{for $1\leq i\leq n$}}\,.\end{equation}
	Applying Proposition~\ref{prop-ent-var} to $\log \E^u_i\left[e^{\tilde{h}(X_1)}\right]$  gives us 
	\begin{equation*}
		\begin{aligned}
			\log \E^u_i\Big[e^{\tilde{h}(X_1)}\Big]&= \sup_{q(i,\cdot,u)\in \mathcal{P}(\mathcal{S})} \bigg[\sum_{j =1}^n\tilde{h}(j)q(i,j,u)-R\big(q(i,\cdot,u) \,||\,p(i,\cdot, u)\big)  \bigg]\\
			&=\sup_{q(i,\cdot,u)\in \mathcal{P}(\mathcal{S})} \bigg[\sum_{j =1}^{n-1}{h}(j)q(i,j,u) -R\big(q(i,\cdot,u) \,||\,p(i,\cdot, u)\big)  \bigg]\,.
		\end{aligned}
	\end{equation*}
	Plugging this back into~\eqref{eq-F-alt-rep}, we obtain the desired result. 
\end{proof}

\subsection{Local contraction of the operator $F(\cdot,\lambda)$}\label{sec-cont}
\begin{proposition}\label{thm-contraction} For $m>0$ and $\lambda\in \RR$, we have \begin{equation}\label{eq-cont-1}
	\| F(h_1,\lambda) - F(h_2,\lambda)\|_m \leq \beta_m\| h_1 - h_2\|_m,
\end{equation}
whenever $\|h_1\|_\infty$, $\|h_2\|_\infty \leq m$. Here, $\beta_m\in (0,1)$ is as defined in~\eqref{eq-contraction_const}.
\end{proposition}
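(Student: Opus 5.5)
The plan is to use the variational representation of Lemma~\ref{cor-F-var-rep} to turn the difference $F_i(h_1,\lambda)-F_i(h_2,\lambda)$ into an expectation of $h_1-h_2$ under a tilted probability vector. Then we bound that vector from below using $\|h_1\|_\infty,\|h_2\|_\infty\le m$, and finally exploit the layered structure of the partition $\{S_k\}$ and the weights $w^m$.

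\emph{Step 1 (one-sided bound).} Fix $i$, and set $\tilde h_r(j)=h_r(j)$ for $j<n$ and $\tilde h_r(n)=0$. Let $u_2\in\bU(i)$ attain the minimum in \eqref{eq-F-var-rep} for $h_2$; it exists by compactness and continuity in Assumption~\ref{assump-main}. The supremum in \eqref{eq-F-var-rep} for $h_1$ at $u_2$ is attained by the Gibbs measure
\[
q^1(j)=\frac{p(i,j,u_2)e^{\tilde h_1(j)}}{\sum_{r}p(i,r,u_2)e^{\tilde h_1(r)}}.
\]
Using $u_2$ as a suboptimal control for $h_1$ and $q^1$ as a suboptimal measure for $h_2$, the $\delta c$, $\lambda$ and relative entropy terms cancel, giving
\[
F_i(h_1,\lambda)-F_i(h_2,\lambda)\le\sum_{j=1}^{n-1}(h_1-h_2)(j)\,q^1(j)\le \|h_1-h_2\|_m\sum_{j=1}^{n-1}w^m_j q^1(j).
\]
Interchanging the roles of $h_1$ and $h_2$ gives the same bound for the reverse difference. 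Hence it suffices to show $\sum_{j<n}w_j^m q(j)\le \beta_m w_i^m$ for any such tilted $q$.

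\emph{Step 2 (lower bound on $q$).} Since $|\tilde h_1|\le m$, we have $q(j)\ge p(i,j,u)e^{-2m}$. By the construction of $S_{k(i)}$ and of $\eta$ in \eqref{eq-transition_prob_const}, there is a state $j^*$ with $k(j^*)<k(i)$ (or $j^*=n$ if $k(i)\le 2$) and $p(i,j^*,u)\ge\eta$. Consequently $q(j^*)\ge a\doteq \eta e^{-2m}$.

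This is the delicate point: the definition of $S_k$ allows the state $j^*$ to depend on $u$, while $\eta$ is defined through $\inf_u p(i,j,u)$. To close the gap I would use compactness of $\bU(i)$ and the finiteness of $S$ to get a uniform positive lower bound on $\max_{j}p(i,j,u)$ over the lower layers. I would then interpret $\eta$ as that uniform bound; this is the step I expect to require the most care.

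\emph{Step 3 (weight computation).} Use $w_j^m\le 1$ for all $j$, $w^m_{j^*}\le 1-a^{2(k(i)-1)}$, and the fact that $j=n$ contributes nothing to the sum. Then
\[
\sum_{j<n} w_j^m q(j)\le (1-q(j^*)) + q(j^*)\big(1-a^{2k(i)-2}\big)\le 1-a^{2k(i)-1}.
\]
(For $j^*=n$ the bound is $1-a$.) It remains to check that $1-a^{2k-1}\le\beta_m(1-a^{2k})$ for $1\le k\le l$. This holds because $x_k\doteq(1-a^{2k-1})/(1-a^{2k})$ is increasing in $k$, so $x_k\le x_l=\beta_m$. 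The cases $i=n$ (where $w_n^m=1$) and $k(i)=2$ reduce to $1-a\le\beta_m$ and $1/(1+a)\le\beta_m$, both of which follow from the same monotonicity.

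Dividing by $w_i^m$ and maximizing over $i$ then yields \eqref{eq-cont-1}.
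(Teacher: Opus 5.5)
Your Step 1 is exactly the paper's argument: the minimizing action for $h_2$, the Gibbs maximizer for $h_1$ reused as a suboptimal measure for $h_2$, and $q\ge e^{-2m}p$. Your Step 3 spells out the layered-weight estimate that the paper imports by citing Tseng's Lemma 3.

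\textbf{The gap: the case $i=n$.} There you take $j^*=n$, which requires $q(n,n)\ge a$, i.e.\ $p(n,n,u)\ge\eta$. But $S_1=\{n\}$ is imposed by definition, not derived from any transition property, and $p(n,n,u)=0$ is allowed. In that case the inequality you reduced everything to, $\sum_{j<n}w^m_jq(n,j)\le\beta_m w^m_n=\beta_m$, fails, and so does the proposition itself. Take $n=2$, a single action, $p(1,1)=p(1,2)=\tfrac{1}{2}$ and $p(2,1)=1$. Then $\eta=\tfrac{1}{2}$, $S_2=\{1\}$, $l=2$, $a=\tfrac{1}{2}e^{-2m}$, $w^m_1=1-a^4$, $\beta_m=(1-a^3)/(1-a^4)$, and $F_2(h,\lambda)=\delta c(2)-\lambda+h(1)$. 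For $h_1=(x,0)$, $h_2=(0,0)$ with $0<x\le m$:
\begin{itemize}
\item $\|F(h_1,\lambda)-F(h_2,\lambda)\|_m\ge x$;
\item $\beta_m\|h_1-h_2\|_m=\beta_m x/(1-a^4)<x$, because $(1-a^4)^2>1-a^3$ is equivalent to $1-2a+a^5>0$, which holds for $a<\tfrac{1}{2}$.
\end{itemize}
The paper's proof has the same hole: it asserts Tseng's bound for all $1\le i\le n$. So no argument closes the case $i=n$ with these constants. What you can prove is the estimate with $\beta_m$ replaced by $\max\{\beta_m,1-a^{2l}\}<1$, since for $i=n$ one only has $\sum_{j<n}w^m_jq(n,j)\le\max_{j<n}w^m_j=1-a^{2l}$. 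Alternatively, layer $n$ by its own jumps into $n$ (treating a jump into $n$ as termination, as in the stochastic-shortest-path reformulation) instead of giving it weight $1$.

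\textbf{Your Step 2 concern is justified.} The repair you propose is necessary, not just an interpretation. With $\eta$ built from $\overline p(i,j)=\inf_u p(i,j,u)$, consider a state of $S_3$ with two actions: one sends mass $\epsilon$ to $n$, the other sends mass $\epsilon$ to a state of $S_2$, and the remaining mass stays put. Then $\overline p=0$ on both targets, so $\eta$ can be pinned at, say, $\tfrac{1}{2}$ by the other transitions. Yet perturbing $h$ at that state alone gives a ratio of at least $1-\epsilon$, which exceeds $\beta_m$ for small $\epsilon$. So the stated inequality can fail for $i<n$ as well.

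Your fix is to replace $\eta$ by any positive number not exceeding $\min_{i\ne n}\min_{u\in\bU(i)}\max_{j\in S_1\cup\dots\cup S_{k(i)-1}}p(i,j,u)$, which is positive by continuity and compactness. (For finitely many actions, the smallest positive $p(i,j,u)$ also works.) With this choice your Step 3 computation for $i<n$ is correct as written, including the monotonicity of $(1-a^{2k-1})/(1-a^{2k})$ in $k$.
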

\begin{remark} Observe that the above theorem implies that $F(\cdot,\lambda)$ is only a local contraction as $\beta_m$ depends on $m$ and $\beta_m\to 1$ as $m\to\infty$ by~\eqref{eq-contraction_const}.
\end{remark}
\begin{proof}
	Fix $m>0$, $\lambda\in {\RR}$ and $h_1,h_2\in \RR^n$ such that $\|h_1\|_\infty,\|h_2\|_\infty\leq m$. From Corollary~\ref{cor-F-var-rep}, for $1\leq i\leq n$, we have
	\begin{equation}\label{eq-contraction-1}
		F_i(h_1,\lambda) = \min_{u\in \bU(i)}\bigg[\delta c(i,u)+ \sup_{q(i,\cdot,u) \in \mathcal{P}(S)}\bigg(\sum_{j=1}^{n-1}h_1(j)q(i,j,u)- R\big(q(i,\cdot,u) \,||\,p(i,\cdot, u)\big) \bigg)\bigg]-\lambda\,,\end{equation}
	\begin{equation}\label{eq-contraction-2}
		F_i(h_2,\lambda) = \min_{u\in \bU(i)}\bigg[\delta c(i,u)+\sup_{q(i,\cdot,u) \in \mathcal{P}(S)}\bigg( \sum_{j=1}^{n-1}h_2(j)q(i,j,u)- R\big(q(i,\cdot,u) \,||\,p(i,\cdot, u)\big) \bigg)\bigg]-\lambda\,.
	\end{equation}
	Next, we choose $v_2\in \Usm$ such that it is a minimizer for~\eqref{eq-contraction-2}. 
	This gives us
	\begin{equation}\label{eq-contraction-3}
		\begin{aligned}
			F_i(h_1,\lambda)& - F_i(h_2,\lambda ) \\
			&\leq  \sup_{q(i,\cdot,v_2(i)) \in \mathcal{P}(S)}\bigg(  \sum_{j=1}^{n-1}h_1(j)q(i,j,v_2(i)) - \sum_{j=1}^nq (i,j,v_2(i))\log \left( \frac{q(i,j,v_2(i))}{p(i,j,v_2(i))} \right)\bigg)\\
			&\qquad  -\sup_{q(i,\cdot,v_2(i)) \in \mathcal{P}(S)}\bigg(\sum_{j=1}^{n-1}h_2(j)q(i,j,v_2(i)) - \sum_{j=1}^nq (i,j,v_2(i))\log \left( \frac{q(i,j,v_2(i))}{p(i,j,v_2(i))} \right) \bigg)\,.
		\end{aligned}
	\end{equation}
	To arrive at the inequality, we use the fact that $v_2\in \Usm$ is sub-optimal for the minimization in~\eqref{eq-contraction-1}. From Proposition~\ref{prop-ent-var}, we know that 
	$$\sup_{q(i,\cdot,v_2(i)) \in \mathcal{P}(S)}\bigg(\sum_{j=1}^{n-1}h_1(j)q(i,j,v_2(i)) - \sum_{j =1}^nq (i,j,v_2(i))\log \left( \frac{q(i,j,v_2(i))}{p(i,j,v_2(i))} \right)  \bigg) $$
	has a unique maximizer $q^*=q^*(i,\cdot, v_2(i))$  given by	
	\begin{equation}\label{eq-contraction-4-0}
		q^*(i,j,v_2(i)) = \frac{e^{\tilde h_1(j)}p(i,j,v_2(i))}{\sum_{k=1}^ne^{\tilde h_1(k)}p(i,k,v_2(i))}\geq e^{-2m}p(i,j,v_2(i))\,.
	\end{equation}
	To get the above inequality, we use $\|h_1\|_\infty \leq m$.  Here,  $\tilde h_1\in \RR^n$ is defined  as $\tilde{h}_1(j) \doteq  h_1(j) $, for $1\leq j\leq n-1$ and $0$, for  $j=n$.	
	Plugging this into~\eqref{eq-contraction-3} and using the fact that $q^*(i,\cdot,v_2(i))$ is sub-optimal for 
	$$ \sup_{q(i,\cdot,v_2(i)) \in \mathcal{P}(S)}\bigg(\sum_{j=1}^{n-1}h_2(j)q(i,j,v_2(i))- \sum_{j =1}^nq (i,j,v_2(i))\log \left( \frac{q(i,j,v_2(i))}{p(i,j,v_2(i))} \right)  \bigg),$$
	we get
	\begin{equation}\label{eq-contraction-4}
		F_i(h_1,\lambda) - F_i(h_2,\lambda) \leq \sum_{j =1}^{n-1}(h_1(j) - h_2(j))q^*(i,j,v_2(i))\,.	\end{equation}
	Following the arguments from the proof of Lemma 3 in \cite{tseng1990solving}, we can conclude that $q^*(i,\cdot,v_2(i))$ satisfies $\sum_{j =1}^{n-1}w^m_jq^*(i,j,v_2(i))\leq \beta_m w^m_i$,  for $1\leq i\leq n$.
	Recall $w^m$ and $\beta_m$ from \eqref{eq-weight_vector} and \eqref{eq-contraction_const}, respectively. 
	Now for the right hand side of~\eqref{eq-contraction-4}, we have 
	\begin{equation}\label{eq-contraction_ineq_1}
		\begin{aligned}
			\sum_{j =1}^{n-1}\frac{(h_1(j) - h_2(j))}{w^m_j}w^m_jq^*(i,j,v_2(i)) 
			\leq \beta_m w^m_i \max_{1\leq j \leq n} \left\{\frac{|h_1(j) - h_2(j)|}{w^m_j}\right\} =  \beta_m w_i^m \norm{h_1 - h_2}_m\,.
		\end{aligned}
	\end{equation}
	Hence we obtain $	F_i(h_1,\lambda) - F_i(h_2,\lambda)  \le  \beta_m w_i^m \norm{h_1 - h_2}_m$. 
	Interchanging the roles of $h_1$ and $h_2$, we obtain  $F_i(h_2,\lambda) - F_i(h_1,\lambda) \leq \beta_m w_i^m \norm{h_1 - h_2}_m.$
	From the above two displays, we have 
	\begin{equation*}
		\frac{|F_i(h_1,\lambda) - F_i(h_2,\lambda)|}{w_i^m} \leq \beta_m  \norm{h_1 -h_2}_m,\quad  \text{ for $1\leq i\leq n$,}
	\end{equation*} 
	which in turn, using the definition of $\|\cdot\|_m$ in~\eqref{def-norm} gives us the desired result. 
\end{proof}
\subsection{ Analysis  of the fixed point of $F(\cdot,\lambda)$}\label{sec-prop-hl}
In this section, we consider the existence of fixed points of $F(\cdot,\lambda)$ and also discuss their properties, in terms of $\lambda$. We remark that the existence of these fixed points does not follow from Proposition~\ref{thm-contraction} as the theorem only gives us the local contraction.  
Recall $\tau_n=  \min\{t \geq 1: X_t = n\}$, and 
define  $\underline \tau\doteq \min_{1\leq i\leq n}\inf_{v\in \Usm}\EE_i^v[\tau_n]$ and $\overline \tau\doteq \max_{1\leq i\leq n}\sup_{v\in \Usm}\EE_i^v[\tau_n]$.
It is well-known that under Assumption~\ref{assump-main}, $\underline \tau\leq \overline \tau<\infty$; see Theorem 4.1 in \cite{Cavazos-Cadena2002}. The following lemma gives us the lower bound of $h_\lambda$, in terms of $\lambda$, whenever $h_\lambda$ is finite.   Recall $\underline c$ from~\eqref{eq-c-r}.
\begin{lemma} \label{lem-hl} For any $\lambda \in \Lambda$ and $1\leq i\leq n$,   $h_\lambda(i)\geq\big(\delta \underline c-\lambda \big)\underline \tau\,.$
\end{lemma}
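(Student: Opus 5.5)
Write $a\doteq\delta\underline c-\lambda$. The plan is to work from the stochastic representation in Proposition~\ref{prop-ESSP}(ii). For $\lambda\in\Lambda$ we have $h_\lambda=\log V_\lambda$ with $V_\lambda$ finite. Taking the logarithm of \eqref{eq-V-lambda-def} and using that it commutes with the infimum (it is increasing) gives
\begin{equation*}
h_\lambda(i)=\inf_{v\in\Usm}\log\E_i^v\Big[e^{\sum_{t=0}^{\tau_n-1}(\delta c(X_t,v(X_t))-\lambda)}\Big].
\end{equation*}
Since $c\ge\underline c$ on $S_\bU$, the exponent is pathwise at least $a\,\tau_n$. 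Hence $h_\lambda(i)\ge\inf_{v\in\Usm}\log\E_i^v[e^{a\tau_n}]$.

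By compactness of $\bU(\cdot)$, continuity in Assumption~\ref{assump-main} and Remark~\ref{rem-ESSP}, the infimum in \eqref{eq-V-lambda-def} is attained at some $v^*\in\Usm$. Applying Jensen's inequality under $\PP^{v^*}_i$ to the convex function $x\mapsto e^{x}$ then gives
\begin{equation*}
h_\lambda(i)\ge a\,\E_i^{v^*}[\tau_n].
\end{equation*}
Here $\E_i^{v^*}[\tau_n]$ is finite and lies in $[\underline\tau,\overline\tau]$ by Theorem~4.1 of \cite{Cavazos-Cadena2002}.

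\textbf{Case $\lambda\le\delta\underline c$.} Then $a\ge0$, and $\E_i^{v^*}[\tau_n]\ge\underline\tau$ immediately yields $h_\lambda(i)\ge a\,\underline\tau$, which is the statement.

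\textbf{Case $\lambda>\delta\underline c$ (main obstacle).} Now $a<0$, and the Jensen bound only gives $a\,\E_i^{v^*}[\tau_n]\ge a\,\overline\tau$. To reach $a\,\underline\tau$, I must show that the relevant expected return time is at most $\underline\tau$, and I have no argument for this yet. What I would try first is to use the fixed-point equation $h_\lambda=F(h_\lambda,\lambda)$ rather than \eqref{eq-V-lambda-def}. Choose a minimizing selector $v$ in \eqref{eq-F-var-rep}. The variational maximizer $q^*(i,\cdot,v(i))$, given by the analogue of \eqref{eq-contraction-4-0} with $h_\lambda$ in place of $h_1$, yields the equality
\begin{equation*}
h_\lambda(i)=\delta c(i,v(i))-\lambda+\sum_{j<n}h_\lambda(j)q^*(i,j,v(i))-R\big(q^*(i,\cdot,v(i))\,\|\,p(i,\cdot,v(i))\big).
\end{equation*}
The strategy would then be as follows.
\begin{enumerate}
\item Use the twisted kernel $q^*$ to define a new DTMC $Q$ on $S$; the displayed equality says $h_\lambda$ satisfies a Poisson-type equation for $Q$.
\item Iterate this equality up to the first return time to $n$ under $Q$, using Dynkin's formula together with $h_\lambda(n)$ entering with coefficient zero (the sum runs over $j<n$).
\item Drop the nonnegative relative-entropy terms; this should give $h_\lambda(i)\le\E^Q_i\big[\sum_{t<\tau_n}(\delta c-\lambda)\big]$.
\end{enumerate}
This is the direction opposite to what is needed. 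I would therefore instead try to exploit the tilting $q^*\propto e^{h_\lambda}p$, which upweights high-value states, to compare $\E^Q_i[\tau_n]$ with a return time under an admissible stationary policy. Failing that, I would check whether $\Lambda$ can be shown to avoid $\lambda>\delta\underline c$ in the regime where $h_\lambda(i)<a\,\underline\tau$ could occur.

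If neither route works, the honest conclusion of the plan is the following. The inequality is established for $\lambda\le\delta\underline c$. For $\lambda>\delta\underline c$, only the weaker bound $h_\lambda(i)\ge a\,\E_i^{v^*}[\tau_n]$ is available; it coincides with the statement only if $\E_i^{v^*}[\tau_n]\le\underline\tau$. That would indicate the stated bound should carry $\overline\tau$ in that regime.
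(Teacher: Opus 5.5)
\paragraph{Review.} Your case $\lambda\le\delta\underline c$ is exactly the paper's argument: take the logarithm of \eqref{eq-V-lambda-def}, use the pathwise bound $c\ge\underline c$, apply Jensen, and then $\E_i^v[\tau_n]\ge\underline\tau$. The paper does not split cases. It runs the same chain for every $\lambda\in\Lambda$, and its final step $(\delta\underline c-\lambda)\E_i^v[\tau_n]\ge(\delta\underline c-\lambda)\underline\tau$ is valid only when $\delta\underline c-\lambda\ge0$. That is precisely where you stopped. So there is no idea you failed to find. For $\lambda>\delta\underline c$ the statement is false, and neither the tilted-kernel/Dynkin route nor showing that $\Lambda$ avoids that regime can succeed.

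What your proposal lacks is the counterexample that settles the second case. Take $S=\{1,2,3\}$ with $n=3$, singleton control sets, $p(1,2,u)=p(2,3,u)=p(3,1,u)=1$, and $c\equiv0$, so that $\delta\underline c=0$; Assumption~\ref{assump-main} holds.
\begin{itemize}
\item Solving $V=\widetilde F(V,\lambda)$ directly gives $V_\lambda=(e^{-2\lambda},e^{-\lambda},e^{-3\lambda})$ for every $\lambda\in\R$. Hence $\Lambda=\R$ and $h_\lambda=(-2\lambda,-\lambda,-3\lambda)$, consistent with \eqref{eq-V-lambda-def} since $\tau_n$ is deterministic here.
\item Since $\E_2[\tau_3]=1$, we have $\underline\tau=1$. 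The lemma then asserts $h_\lambda(3)\ge-\lambda$, which fails for every $\lambda>0$.
\item If you want an aperiodic example, replace $p(3,1,u)=1$ by $p(3,3,u)=p(3,1,u)=\tfrac12$. State $1$ still violates the bound, because $h_\lambda(1)=-2\lambda$.
\end{itemize}

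The two-case bound you actually proved is the correct version of the lemma:
\[
h_\lambda(i)\ge(\delta\underline c-\lambda)\underline\tau \quad\text{if } \lambda\le\delta\underline c,\qquad h_\lambda(i)\ge(\delta\underline c-\lambda)\overline\tau \quad\text{if } \lambda>\delta\underline c .
\]
It is sharp: in the cycle above, $\overline\tau=\E_3[\tau_3]=3$ and $h_\lambda(3)=-3\lambda=(\delta\underline c-\lambda)\overline\tau$. You should state this bound as your conclusion, together with the example, rather than leave the second case open.

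The regime $\lambda>\delta\underline c$ is not excluded anywhere downstream. So the quantities in Proposition~\ref{lem-bounds-h} built from this lemma need adjusting. These are $M'$ and the tilting lower bound inside $\widetilde N_*$. In each, $\underline\tau$ should be replaced by $\overline\tau$ whenever $\delta\underline c-\lambda<0$.
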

\begin{proof}
	 From  Proposition~\ref{prop-ESSP}(ii), we know that $ h_\lambda (i)= \inf_{v\in \Usm}\log \E_i^{v} \Big[e^{\sum_{t=0}^{\tau_n - 1}\big(\delta c(X_t, v(X_t)) - \lambda \big)}\Big]\,.$
	Using Jensen's inequality, we have
	\begin{equation*}
		h_\lambda(i)\geq \inf_{v\in \Usm}\E_i^{v} \bigg[\sum_{t=0}^{\tau_n - 1}\big(\delta c(X_t, v(X_t)) - \lambda \big)\bigg]\geq (\delta\underline c-\lambda)\min_{1\leq i\leq n}\inf_{v\in \Usm}\EE_i^{v}[\tau_n]\,. \end{equation*}
	From the definition of $\underline \tau$,  the lemma is proved. 
\end{proof}

Next we investigate the local bi-Lipschitz continuity of the map $\lambda\mapsto h_\lambda$, whenever  $\lambda\in \Lambda$. For  each $m>0$, we let
\begin{equation}\label{eq-lm} \Lambda_m \doteq \big\{\lambda\in \RR: \max_{1\leq i\leq n} h_\lambda (i) \leq m\big\}\subset \Lambda\,.\end{equation}
{ Since the DTMC $X$ is irreducible under all stationary policies by Assumption~\ref{assump-main}, we know that for each $i\in S$ and stationary policy $v$ there exists a finite path of states $i=x_0, x_1, \dots, x_{\ell^v(i) - 1}, x_{\ell^v(i)} = n$ with $\ell^v(i)<\infty$ such that $p(x_j,x_{j+1},v(x_j))>0$ for $j = 0,\dots, \ell^v(j) - 1$. We define
\begin{equation}\label{eq-L-max-transition-path}
	\overline L \doteq \max_{1\leq i\leq n}\sup_{v\in \mathfrak{U_{sm}}}\ell^v(i)\,.
\end{equation}
Let $\alpha^v(i) \doteq \Pi_{j=0}^{\ell^v(i) - 1}p(x_j,x_{j+1},v(x_j))\in (0,1)$, and define
\begin{equation}\label{eq-alpha-min-prob}
	\underline \alpha \doteq \min_{1\leq i\leq n} \inf_{v\in \mathfrak U_{sm}}\alpha^v(i). 
\end{equation}
Under Assumption~\ref{assump-main}, we have that $\overline L <\infty$ and $\underline{\alpha}\in (0,1)$.  
We are now in a position to state our result on the local bi-Lipschitz continuity of the map $\lambda \mapsto h_\lambda.$ 
}

\begin{proposition}\label{lem-bounds-h}
Suppose $\lambda,\lambda'\in \Lambda $ and $\lambda>\lambda'$ and let $m'\doteq \max_{1\leq i\leq n}h_{\lambda'}(i)$ and $ m\doteq \max_{1\leq i\leq n}h_{\lambda}(i)$. Then,  we have
\begin{equation}\label{eq-bounds-h}
	\widetilde  N_*(m,\lambda )(\lambda-\lambda') \leq h_{\lambda'}(i)-h_{\lambda}(i)\leq \widetilde  N^*(m',\lambda ')(\lambda-\lambda'),
\end{equation}
for $1\leq i\leq n$. Here, 
\begin{equation}\label{eq-N}
	\begin{aligned}\widetilde  N^*(m',\lambda')& \doteq \frac{\overline L e^{2\overline L({m'}-(\delta \underline c-\lambda' )\underline \tau)}}{\underline \alpha},\\
		\widetilde N_*(m,\lambda)& \doteq 1+ \min_{1\leq i\leq n} \inf_{v\in \Usm} \EE^v_i\Big[(\tau_n-1) e^{\big(( \delta \underline c-\lambda )\underline \tau-m\big)\tau_n}\Big]\,.
	\end{aligned}
\end{equation}
	\end{proposition}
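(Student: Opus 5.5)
The plan is to turn the fixed point equations into a pair of linear one-sided inequalities for the difference $D\doteq h_{\lambda'}-h_\lambda$, using Lemma~\ref{cor-F-var-rep}. Iterating these inequalities up to the return time $\tau_n$ (a Dynkin-type argument) should express both bounds as $(\lambda-\lambda')$ times an expected return time to $n$ under a tilted Markov chain. The work then reduces to controlling these return times in terms of the original kernel.

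\emph{Upper bound.} Let $v\in\Usm$ attain the minimum in $h_\lambda=F(h_\lambda,\lambda)$.
\begin{enumerate}
\item Since $v$ is suboptimal in $h_{\lambda'}=F(h_{\lambda'},\lambda')$, the $\delta c(i,v(i))$ terms cancel when I subtract the representation~\eqref{eq-F-var-rep} of $h_\lambda(i)$ from the bound for $h_{\lambda'}(i)$.
\item Take $q'(i,\cdot)$ to be the maximizer of the entropy problem for $h_{\lambda'}$ under $p(i,\cdot,v(i))$, that is, $q'(i,j)\propto e^{\tilde h_{\lambda'}(j)}p(i,j,v(i))$. Insert it as a suboptimal choice in the supremum for $h_\lambda$. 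This exactly as in the proof of Proposition~\ref{thm-contraction} gives
\[
D(i)\le (\lambda-\lambda')+\sum_{j=1}^{n-1}q'(i,j)D(j).
\]
\item Iterate along the chain with kernel $q'$, killed at $n$ (note $\tilde D(n)=0$), up to time $T$. The remainder $\E^{q'}_i[D(X_T)\Ind_{\{\tau_n>T\}}]$ vanishes as $T\to\infty$ because $D$ is bounded and $\tau_n<\infty$ a.s. ($q'\sim p$, so irreducibility is preserved). This yields $D(i)\le(\lambda-\lambda')\E^{q'}_i[\tau_n]$.
\item To bound $\E^{q'}_i[\tau_n]$, Lemma~\ref{lem-hl} gives $h_{\lambda'}\ge(\delta\underline c-\lambda')\underline\tau$, and $h_{\lambda'}\le m'$. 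Hence
\[
q'(i,j)\ge e^{-2(m'-(\delta\underline c-\lambda')\underline\tau)}\,p(i,j,v(i)),
\]
after checking the sign conventions with $\tilde h(n)=0$.
\item Along the path of length $\ell^v(i)\le\overline L$ to $n$, the $q'$-probability of reaching $n$ within $\overline L$ steps from any state is therefore at least $\underline\alpha e^{-2\overline L(m'-(\delta\underline c-\lambda')\underline\tau)}$.
\item A standard geometric-trials argument in blocks of $\overline L$ steps gives $\E^{q'}_i[\tau_n]\le\widetilde N^*(m',\lambda')$.
\end{enumerate}

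\emph{Lower bound.} This runs symmetrically.
\begin{enumerate}
\item Take $v'$ optimal for $\lambda'$ and use it as a suboptimal control for $\lambda$.
\item Use the maximizer $\tilde q(i,j)\propto e^{\tilde h_\lambda(j)}p(i,j,v'(i))$, coming from the $h_\lambda$ problem, as the suboptimal choice in the $h_{\lambda'}$ problem. This gives
\[
D(i)\ge(\lambda-\lambda')+\sum_{j\neq n}\tilde q(i,j)D(j).
\]
\item Iterating as before gives $D(i)\ge(\lambda-\lambda')\E^{\tilde q}_i[\tau_n]$.
\item Write $\E^{\tilde q}_i[\tau_n]=1+\E^{\tilde q}_i[\tau_n-1]$. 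Change measure path by path: each step carries a factor $e^{\tilde h_\lambda(x_{t+1})}/\sum_k e^{\tilde h_\lambda(k)}p\ge e^{(\delta\underline c-\lambda)\underline\tau-m}$, again by Lemma~\ref{lem-hl} and $h_\lambda\le m$.
\item This gives
\[
\E^{\tilde q}_i[\tau_n-1]\ge\E^{v'}_i\Big[(\tau_n-1)e^{((\delta\underline c-\lambda)\underline\tau-m)\tau_n}\Big],
\]
and taking the infimum over $v'$ and the minimum over $i$ yields $\widetilde N_*(m,\lambda)$.
\end{enumerate}

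\emph{Main obstacle.} The delicate step is the uniform return-time bound for the tilted chain $q'$ in the upper bound. The tilting depends on the unknown $h_{\lambda'}$, so the argument must use only the two-sided bounds $(\delta\underline c-\lambda')\underline\tau\le h_{\lambda'}\le m'$ together with the path constants $\overline L,\underline\alpha$. It must also handle the normalizing denominator carefully, since $\tilde h(n)=0$ may lie outside the range of $h_{\lambda'}$ and should be absorbed into the factor $2$ in the exponent. A secondary point is justifying that the truncated iteration's remainder vanishes, which uses finiteness of $\E^{q'}[\tau_n]$ and boundedness of $D$.
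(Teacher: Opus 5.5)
Your argument follows the paper's proof step for step: the same choice of $v$ and $v'$, the same exponentially tilted kernels, the Dynkin iteration up to $\tau_n$, geometric trials over blocks of $\overline L$ steps, and the path-wise change of measure. However, the step you flagged as the main obstacle is a genuine gap. The paper's proof has the same gap in \eqref{eq-q-ub} and in its bound on $\QQ_*(\tau_{*,n}=N)$.

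Write $a=(\delta\underline c-\lambda)\underline\tau$ for the lower bound from Lemma~\ref{lem-hl}, and $a'$ for the corresponding quantity at $\lambda'$. Because $\tilde h(n)=0$, the tilted density $e^{\tilde h(j)}/\sum_k e^{\tilde h(k)}p(i,k,\cdot)$ is in general only bounded below by $e^{\min\{a,0\}-\max\{m,0\}}$. This equals $e^{-(m-a)}$ only when $a\le 0\le m$.
\begin{itemize}
\item In the upper bound, the factor $2$ absorbs the discrepancy in that straddling case but not otherwise; for instance, when $a'>0$ you would need $m'\ge 2a'$.
\item In the lower bound, you claim the per-step factor $e^{a-m}$ with no slack at all. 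This is unjustified as soon as $m<0$ or $a>0$.
\item Separately, for $\lambda>\delta\underline c$ the Jensen argument of Lemma~\ref{lem-hl} only yields $(\delta\underline c-\lambda)\overline\tau$, not $(\delta\underline c-\lambda)\underline\tau$.
\end{itemize}

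This is not a bookkeeping issue that a more careful write-up fixes: the inequalities \eqref{eq-bounds-h} with the stated constants can fail. Take $n=2$, a single action, $p(i,j)=\tfrac12$, $c\equiv c_0$ and $\delta=1$. Then $h_\lambda(1)=h_\lambda(2)=s-\log(2-e^{s})$ with $s=c_0-\lambda$, $\Lambda=(c_0-\log 2,\infty)$ and $\underline\tau=\overline\tau=2$. The shortest paths give $\overline L=1$ and $\underline\alpha=\tfrac12$.
\begin{itemize}
\item Lower bound: for $\lambda=c_0+0.1$ and $\lambda'=c_0+0.09$, one gets $h_{\lambda'}(i)-h_\lambda(i)\approx 0.01834$, but $\widetilde N_*(m,\lambda)(\lambda-\lambda')\approx 1.964\cdot 0.01$.
\item Upper bound: for $\lambda'=c_0-0.1$ and $\lambda=c_0-0.09$, one gets $h_{\lambda'}(i)-h_\lambda(i)\approx 0.02222$, but $\widetilde N^*(m',\lambda')(\lambda-\lambda')\approx 2.045\cdot 0.01$.
\end{itemize}
Your Dynkin bounds are correct in this example. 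The tilted mean return time is $1+e^{h}=2/(2-e^{s})$, evaluated at the fixed point used for the tilt. It is about $1.826$ in the lower-bound case and about $2.235$ in the upper-bound case, and both bracket the true differences. So it is precisely the density comparisons that break.

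What your argument does prove is the statement with $m-a$ replaced by $K\doteq\max\{m,0\}-\min\{a,0\}$, and $m'-a'$ replaced by $K'$ defined likewise. Here $a$ should be $(\delta\underline c-\lambda)\underline\tau$ or $(\delta\underline c-\lambda)\overline\tau$ according as $\lambda\le\delta\underline c$ or not. With this change the tilted kernels satisfy $q\ge e^{-K}p$ directly, with no factor $2$ needed. This gives $\E^{q'}_i[\tau_n]\le \overline L e^{\overline L K'}/\underline\alpha$ and $\E^{\tilde q}_i[\tau_n]\ge 1+\min_{i}\inf_{v\in\Usm}\E^v_i[(\tau_n-1)e^{-K\tau_n}]$.
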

\begin{proof}
	 We first show that 
	\begin{equation}\label{eq-hl-u}h_{\lambda'}(i)-h_{\lambda}(i)\leq \widetilde  N^*(m,\lambda ')(\lambda-\lambda'), \quad \text{ for \,\, $1\leq i\leq n$}\,. \end{equation}
	From Corollary~\ref{cor-F-var-rep}, we have 
	\begin{equation*}
		h_{\lambda}(i)= \min_{u\in \bU(i)}\bigg[\delta c(i,u)+\sup_{q(i,\cdot,u) \in \mathcal{P}(S)}\bigg( \sum_{j=1}^n\tilde h_{\lambda}(j)q(i,j,u) - \sum_{j =1}^nq (i,j,u)\log \left( \frac{q(i,j,u)}{p(i,j,u)} \right) \bigg)\bigg]{-\lambda} ,
	\end{equation*}
	where $\tilde{h}_{\lambda}(j) \doteq  h_{\lambda}(j) $ for $1\leq j\leq n-1$ and $0$ for  $j=n$. 
	Now choose $v\in \Usm$ such that it is a minimizer of the above equation.  For this choice of $v\in \Usm$, we have 
	\begin{equation}\label{eq-lin-11} \scalemath{0.95}{h_{\lambda'} (i)\leq \delta c(i,v(i))  + \sup_{q(i,\cdot,v(i))\in \calP(S)}\bigg(\sum_{j=1}^n\tilde h_{\lambda'}(j)q(i,j,v(i)) - \sum_{j =1}^nq (i,j,v(i))\log \left( \frac{q(i,j,v(i))}{p(i,j,v(i))} \right) \bigg) -\lambda' \,.} \end{equation}
	Here, $\tilde h_{\lambda'}$ is defined similarly as above. Next, choose $q^*(i,\cdot,v(i))\in \calP(S)$ such that it achieves the supremum in the above display.  Note that $q^*(i,\cdot,v(i))$ is given by 
	{
		$$q^*(i,j,v(i)) = \frac{e^{\tilde h_{\lambda'}(j)}p(i,j,v(i))}{\sum_{j=1}^ne^{\tilde h_{\lambda'}(j)}p(i,j,v(i))},$$
		and so it is clear that $q^*$ satisfies the following upper and lower bounds
		\begin{equation}\label{eq-q-ub}
			e^{-2M'}p(i,j,v(i))\le q^*(i,j,v(i))\leq e^{2M'}p(i,j,v(i)),
		\end{equation}
		where $M' \doteq {m'}-(\delta \underline c-\lambda' )\underline \tau$ and we used  the definition of $m'$ from the hypothesis of the lemma and  the  bound from Lemma~\ref{lem-hl}. 
	} 
	From here, this gives us
	\begin{equation}\label{eq-lin-21} h_{\lambda}(i)\geq \delta c(i,v(i))+ \sum_{j=1}^n\tilde h_{\lambda}(j)q^*(i,j,v(i))-\sum_{j =1}^nq^* (i,j,v(i))\log \left( \frac{q^*(i.j,v(i))}{p(i,j,v(i))} \right){-\lambda}  \,.\end{equation}
	It is easy to deduce that $q^*=\big(q^*(i,j,v(i))\big)_{i,j\in S}$ is a Markov transition probability. Let $X^*=\{X^*_t\}_{t=0}^\infty$ denote the associated DTMC on $S$ and $\tau_n^*$ denote the corresponding  first return time to state $n$ starting from $i$ (we have suppressed the dependence on $i$). Finally, to keep the expressions below concise, we write the expectation associated with $q^*$ as $\EE^*$ and denote the  law of $X^*$ by $\QQ^*$. If $X^*_0=i$, then we write $\EE^*_i$ to emphasize this.

	For $T>0$, we apply Dynkin's formula to $h_{\lambda}(X_t^*)$ and $h_{\lambda'}(X_t^*)$ (up to the stopping time $(\tau^*_n-1)\wedge T$), where $h_\lambda(\cdot)$ and $h_{\lambda'}(\cdot)$ satisfy~\eqref{eq-lin-11} and~\eqref{eq-lin-21}, respectively. Fixing $i\neq n$, from Dynkin's formula, we obtain
	\begin{equation}\label{eq-line-31}
		h_{\lambda'}(i) = \tilde h_{\lambda'}(i) =  \E_i^*\big[ \tilde h_{\lambda'}(X^*_{\tau_n^* \wedge T })\big] + \E_i^*\Bigg[\sum_{t=0}^{(\tau_n^* - 1) \wedge T}-\mathcal{A}^* \tilde h_{\lambda'}(X^*_t)\Bigg],
	\end{equation}
	where $\mathcal A^*$ is the one step generator for the DTMC $X^*$, \emph{i.e.}, $\mathcal A^* f(i)\doteq  \E_i^*[f(X^*_1)] - f(i)$ for any $f:S\rightarrow \RR$. Note that we can replace $h_{\lambda'}(x)$ with $\tilde h_{\lambda'}(x)$ inside the sum of the generators. The same procedure holds if we take $i=n$. From the definition of $\mathcal A^* \tilde h_{\lambda'}(x) $ and the inequality in \eqref{eq-lin-21}, we have that 
	\begin{equation}\label{eq-line-41}
		\mathcal A^* \tilde h_{\lambda'}(x) \geq -\delta c(x, v(x)) + \lambda' + R\big(q^*(x,\cdot, v(x))|| \, p(x, \cdot, v(x)) \, \big).
	\end{equation}
	Combining \eqref{eq-line-31} and \eqref{eq-line-41} gives us
	\begin{equation*}
		h_{\lambda'}(i)\leq  \EE_i^*\Bigg[\sum_{t=0}^{(\tau^*_n-1)\wedge T} \bigg(\delta c(X^*_t,v(X^*_t))- R\big(q^*(X_t^*, \cdot, v(X_t^*)) \| \, p(X_t^*, \cdot, v(X^*_t))\big) - \lambda'\bigg)+ \tilde h_{\lambda'} (X^*_{\tau^*_n\wedge T})\Bigg].
	\end{equation*}
	An analogous approach can be used to establish 
	\begin{equation*}
		h_{\lambda}(i)\geq  \EE_i^*\Bigg[\sum_{t=0}^{(\tau^*_n-1)\wedge T} \big(\delta c(X^*_t,v(X^*_t)) - R\big(q^*(X_t^*, \cdot, v(X^*_t)) \| \, p(X_t^*, \cdot, v(X^*_t))\big) -\lambda \big)+ \tilde h_{\lambda} (X^*_{\tau^*_n\wedge T})\Bigg]\,.
	\end{equation*}
	From the above, we clearly have
	\begin{equation}\label{eq-lb-ret-1}
		h_{\lambda'}(i)-h_{\lambda}(i)\leq\EE_i^*\Big[ \tau^*_n\wedge T\Big](\lambda-\lambda')+ \EE_i^*\Big[\tilde h_{\lambda'}(X^*_{\tau^*_n\wedge T})-\tilde h_{\lambda}(X^*_{\tau^*_n\wedge T})\Big]\,.
	\end{equation}
	From the monotone convergence theorem, we have
	$ \EE_i^*[\tau^*_n\wedge T]\uparrow \EE_i^*[\tau^*_n], \text{ as $T\uparrow \infty$}\,.$
	Owing to this fact, we take $T\uparrow \infty$ in~\eqref{eq-lb-ret-1}. This gives us
	\begin{equation*}
		h_{\lambda'}(i)-h_{\lambda}(i){\leq}\EE_i^*\big[ \tau^*_n\big](\lambda-\lambda')+ \limsup_{T\to\infty}\EE_i^*\Big[|\tilde h_{\lambda'}(X^*_{\tau^*_n\wedge T})|+|\tilde h_{\lambda}(X^*_{\tau^*_n\wedge T})|\Big]\,.
	\end{equation*} 
	The term with $\limsup$ above goes to zero as $T\uparrow \infty$ as $\tilde h_\lambda(n)$ and $\tilde h_{\lambda'}(n)$ are zero. To summarize, until now, we have shown that  
	\begin{equation}\label{eq-lb-tau-0}
		h_{\lambda'}(i)-h_{\lambda}(i)\leq\EE_i^*\big[ \tau^*_n\big](\lambda-\lambda')\,.
	\end{equation} 
	{ Below we obtain a finite bound for $\EE_i^*[\tau_n^*]$ in terms of the original transition kernel and $M'= {m'}-(\delta \underline c-\lambda' )\underline \tau$.
		Recall the definition of the constants  $\overline L$ and $\underline \alpha$ in \eqref{eq-L-max-transition-path} and~\eqref{eq-alpha-min-prob}, respectively. From the lower bound on the transition kernel $q^*$ in \eqref{eq-q-ub}, we have that
		\begin{equation}\label{eq-tau-prob-bound-init}
			\mathbb Q_i^*\big(\tau^*_{n} \le \overline L\big) \ge \Pi_{j=0}^{\ell^v(i)-1}q^*(x_j,x_{j+1},v(x_j)) \ge e^{-2M'\overline L}\Pi_{j=0}^{\ell^v(i) - 1}p(x_j,x_{j+1},v(x_j)) \ge e^{-2M'\overline L} \underline \alpha\,.
		\end{equation}
		Under a similar argument as above and inducting on $k\ge 1$,
		we have that  
		\begin{equation}\label{eq-tau-prob-bound}
			\mathbb Q_i^*\big(\tau_n^* > k\overline L\big) \le \big(1 - e^{-2M'\overline L}\underline \alpha\big)^k\,.
		\end{equation}
		We prove this below. Note that the base case holds because of \eqref{eq-tau-prob-bound-init}. We now verify the induction step. Suppose that $\eqref{eq-tau-prob-bound}$ holds for some $k\ge 1$. We observe that
		\begin{align*}
			\mathbb Q_i^*\big(\tau_n^* > (k+1)\overline L\big) &= 	\mathbb Q_i^*\big(\tau_n^* > (k+1)\overline L \, \, \big| \tau^*_n > k\overline L\big)\, \mathbb Q_i^*(\tau^*_n > k\overline L)\\
			&=  \E_i^*\big[\mathbb Q^*\big(\tau^*_n > \overline L\, \big| X^*_0 = X^*_{k \overline L}\big) \big] \, \mathbb Q^*_i(\tau^*_n > k\overline L)\\
			&\le \big(1 - e^{-2M'\overline L}\underline \alpha\big) \cdot \big(1 - e^{-2M'\overline L}\underline \alpha\big)^k\\
			&= \big(1 - e^{-2M'\overline L}\underline \alpha\big)^{k+1}\,.
		\end{align*}
		In the above, to get the first equality we conditioned on the event $\tau^*_n > k \overline L$. To get the second equality, we used the strong Markov property and the fact that the process is stationary (where with slight abuse of notation we used $X_0^* = X^*_{k\overline L}$ to denote a new Markov process starting from  $X^*_{k\overline L}$ ). Lastly, to get the third line, we used \eqref{eq-tau-prob-bound-init} and the inductive hypothesis. This proves \eqref{eq-tau-prob-bound}.
		
		Using \eqref{eq-tau-prob-bound}, we bound the mean of $\tau^*_n$ as follows
		\begin{equation}\label{eq-tau-mean-bound}
			\begin{aligned}
				\E_i^*\big[\tau_n^*\big] &= \sum_{k\ge 0}\mathbb Q^*\big(\tau_n^* > k\,: X_0^* = i\big)\\
				&\le \overline L \sum_{k\ge 0}\mathbb Q^*\big(\tau_n^* > k\overline L\,: X_0^* = i\big)\\
				&\le \overline L \sum_{k\ge 0}\big(1 - e^{-2M'\overline L}\underline \alpha\big)^k\\
				&= \frac{\overline L e^{2M'\overline L}}{\underline \alpha}\,.
			\end{aligned}
		\end{equation}
		
		Combining~\eqref{eq-lb-tau-0} and~\eqref{eq-tau-mean-bound} completes the proof of~\eqref{eq-hl-u}.
	}
		
		We now move on to show that \begin{equation}\label{eq-hl-l}h_{\lambda'}(i)-h_{\lambda}(i)\geq \widetilde  N_*(m,\lambda ')(\lambda-\lambda'), \quad {\text{ for $1\leq i\leq n$}}\,. \end{equation} 
		The proof follows along the same lines as the proof of~\eqref{eq-hl-u}. Hence we omit the details and only provide the expression analogous to~\eqref{eq-lb-tau-0}. To do this, let us define a transition probability that is analogous to~\eqref{eq-q-ub}:
		\begin{equation*}
			q_*(i,j,v'(i)) = \frac{e^{\tilde h_\lambda(j)}p(i,j,v'(i))}{\sum_{j=1}^ne^{\tilde h_\lambda(j)}p(i,j,v'(i))}\,.
		\end{equation*} 
		In the above, we choose $v'\in \Usm$ such that 
		$$ h_{\lambda'} (i)=\delta c(i,v'(i)) -\lambda'+  \sup_{q(i,\cdot,v'(i))\in \calP(S)}\bigg(\sum_{j=1}^n\tilde h_{\lambda'}(j)q(i,j,v'(i))- \sum_{j =1}^nq (i,j,v'(i))\log \left( \frac{q(i,j,v'(i))}{p(i,j,v'(i))} \right) \bigg)\,.$$
		
		Let $X_*=\{X_{*,t}\}_{t=0}^\infty$ denote the associated DTMC on $S$ and $\tau_{*,n}$ denote the corresponding  first return time to state $n$ starting from $i$ (we have suppressed the dependence of $i$). Finally, to keep the expressions below concise, we write the expectation associated with $q_*$ as $\EE_*$ and denote the  law of $X_*$ by $\QQ_*$. If $X_{*,0}=i$, then we write $\EE_{*,i}$ to emphasize this.
		
		Using the above construction, the expression analogous to~\eqref{eq-lb-tau-0} is given by
		\begin{equation}\label{eq-lb-tau}
			h_{\lambda'}(i)-h_{\lambda}(i)\geq\EE_{*,i}\big[ \tau_{*,n}\big](\lambda-\lambda')\,.
		\end{equation} 
		
		{Below we obtain a simpler lower bound for $\EE_{*,i}\big[ \tau_{*,n}\big]$:
			\begin{equation*}
				\EE_{*,i}\big[ \tau_{*,n}\big]=1+ \sum_{N=1}^\infty (N-1)\QQ_*(\tau_{*,n}=N)\geq 1+ \sum_{N=1}^\infty (N-1)e^{N( \delta \underline c-\lambda )\underline \tau-mN} \PP(\tau_n=N).
			\end{equation*}
			The above inequality can be proved as follows. The probability $\QQ_*(\tau_n^*=N)$ involves exactly $N$ transitions. On the other hand, using~\eqref{eq-q-ub}, we know that the probabilities of each of the intermediate transitions (under $\QQ_*$)   are bounded from below by $e^{( \delta \underline c-\lambda )\underline \tau-m}$ times the probability of that transition under $\PP$ (the original measure). In other words, we have $\QQ_*(\tau_{*,n}=N)\geq e^{N( \delta \underline c-\lambda )\underline \tau-mN} \PP(\tau_n=N)\,.$

				Now it is clear that the following holds
				$$ \sum_{N=1}^\infty (N-1)e^{N( \delta \underline c-\lambda )\underline \tau-mN} \PP(\tau_n=N)= \EE^{v'}_i\big[(\tau_n-1) e^{\big(( \delta \underline c-\lambda )\underline \tau-m\big)\tau_n} \big]\,.$$
				This in turn implies that 
				\begin{equation}\label{eq-lb-tau-11}\begin{aligned} \EE_i^*\big[ \tau^*_n\big]&\ge 1+  \EE^{v'}_i\big[(\tau_n-1) e^{\big(( \delta \underline c-\lambda )\underline \tau-m\big)\tau_n}\big]\\
						&\geq 1+ \min_{1\leq i\leq n} \inf_{v'\in \Usm} \EE_i^{v'}\big[(\tau_n-1) e^{\big(( \delta \underline c-\lambda )\underline \tau-m\big)\tau_n}\big]= \widetilde N_*(m,\lambda)\,.\end{aligned}\end{equation}
				This completes the proof of the lemma.} 
\end{proof}
	
	\begin{remark}\label{rem-Lipschitz-difficult} 
		In the risk-neutral setting~\citep{Bertsekas_VI_98}, for any $\lambda\in \R$, the fixed point of the average cost Bellman-like operator (denoted by $h^{avg}_\lambda$) satisfies the following stochastic representation:
		$$ h^{avg}_\lambda(i) = \inf_{v\in \mathfrak U_{sm}} \E_i^v \bigg[\sum_{t=0}^{\tau_n - 1}\big(c(X_t, v(X_t)) - \lambda\big)\bigg], \quad i = 1,\dots, n\,.$$
		We review how this can be used to obtain the Lipschitz bounds in the risk-neutral setting. Recall that under Assumption~\ref{assump-main},  the  DTMC is positive recurrent uniformly over all stationary policies. Hence, letting  $\overline  N$ and $\underline N$ denote the maximum and minimum expected return times to state $n$ over all stationary policies (respectively), for any $\lambda' > \lambda$,  we have that the following inequality holds
		$$ \underline N (\lambda ' - \lambda)\le  h_\lambda^{avg}(i) - h_{\lambda'}^{avg}(i) \le \overline N (\lambda ' - \lambda), \quad \forall i = 1,\dots, n, $$
		resulting in the global bi-Lipschitz continuity property of $\lambda \mapsto h_{\lambda}^{avg}$. 
		
		In the risk-sensitive setting, because the stochastic representation for $h_\lambda$ is not additive due to the presence of the exponential moments of the cumulative cost up to $\tau_n$ (see Proposition~\ref{prop-ESSP}), we observe that this approach does not directly extend. If one attempts to use the variational representations to bound $h_\lambda(i) - h_{\lambda'(i)}$, one ends up with an expected return time under a new law corresponding to the maximizer of the variational representation, which may not be finite. It is for these reasons why we used the approach in the proof of Proposition~\ref{lem-bounds-h}.  
	\end{remark}
	
	Proposition~\ref{lem-bounds-h} gives us the following important proposition. Recall  $\Lambda_m$ from~\eqref{eq-lm}.
	\begin{proposition}\label{coro-connectivity_Lambda} The following hold: 
		\begin{enumerate}
			\item [(i)] For $\lambda,\lambda'\in \Lambda$ such that $\lambda{>} \lambda'$, we have
			$h_\lambda(i){<} h_{\lambda'}(i)$, for every $1\leq i\leq n$.
			\item [(ii)]There exists $\lambda_c\in \RR$ such that the set $\Lambda$ defined in~\eqref{eq-Lambda} can be written as  $\Lambda=(\lambda_{c},\, \infty)$. Moreover, it follows that for $1\leq i\leq n$, $$\lim_{\lambda \uparrow \infty }h_{\lambda}(i) = - \infty,\quad \lim_{\lambda \downarrow \lambda_{c}}h_{\lambda}(i) =  \infty\,.$$ 
			\item [(iii)]Let $m^*\doteq \max_{1\leq i\leq n} h^*(i)$. Then, for $m>m^*$ we have
			$$ \Big(\lambda^*-\frac{m-m^*}{\widetilde N_*(m,\lambda^*)},\,\, \infty\Big)\subset \Lambda_m\,.$$
		\end{enumerate} 
	\end{proposition}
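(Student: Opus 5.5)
Part (i) follows directly from the lower bound in Proposition~\ref{lem-bounds-h}. For $\lambda>\lambda'$ in $\Lambda$ we have $h_{\lambda'}(i)-h_\lambda(i)\geq \widetilde N_*(m,\lambda)(\lambda-\lambda')$. Since $\widetilde N_*\geq 1$ (the expectation term is nonnegative because $\tau_n\geq 1$), the difference is strictly positive.

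For (ii), I will first show that $\Lambda$ is upward closed and nonempty, using the representation \eqref{eq-V-lambda-def}.
\begin{itemize}
\item \emph{Upward closed.} If $\lambda\in\Lambda$ and $\lambda''>\lambda$, then $V_{\lambda''}(i)\leq V_\lambda(i)<\infty$, because the integrand $e^{\sum_{t<\tau_n}(\delta c-\lambda'')}$ is pointwise smaller. Finiteness of this infimum, together with the fact that the infimum over $\Usm$ is attained and satisfies the fixed point equation (via Theorem~5.1 of \cite{Cavazos-Cadena2002}, as in Proposition~\ref{prop-ESSP}), gives $\lambda''\in\Lambda$.
\item \emph{Nonempty.} For $\lambda>\delta\overline c$ every exponent is negative, so $V_\lambda\leq 1$ and $\lambda\in\Lambda$.
\end{itemize}
Set $\lambda_c\doteq\inf\Lambda$. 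Note $\lambda_c$ is finite: $\lambda^*\in\Lambda$, and for $\lambda\ll 0$ the integrand is at least $e^{(\delta\underline c-\lambda)\tau_n}$, whose expectation blows up. To see that $\Lambda$ is open at $\lambda_c$, I will argue by contradiction. If $\lambda_c\in\Lambda$, let $m=\max_i h_{\lambda_c}(i)$. Apply the contraction of Proposition~\ref{thm-contraction} on the ball $\{\|h\|_\infty\leq m+1\}$ to $F(\cdot,\lambda)$ for $\lambda$ slightly below $\lambda_c$. Since $F(h,\lambda)=F(h,\lambda_c)+(\lambda_c-\lambda)e$, we get
\[
\|F(h_{\lambda_c},\lambda)-h_{\lambda_c}\|_m\lesssim \lambda_c-\lambda .
\]
A standard local Banach fixed-point argument then produces a fixed point of $F(\cdot,\lambda)$ inside the ball, provided $(\lambda_c-\lambda)/(1-\beta_{m+1})$ is smaller than the radius. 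This contradicts $\lambda\notin\Lambda$. So $\Lambda=(\lambda_c,\infty)$.

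For the limits:
\begin{itemize}
\item As $\lambda\to\infty$, Proposition~\ref{lem-bounds-h} gives $h_{\lambda_0}-h_\lambda\geq(\lambda-\lambda_0)$ for a fixed $\lambda_0\in\Lambda$, so $h_\lambda\to-\infty$.
\item As $\lambda\downarrow\lambda_c$, $h_\lambda$ is monotone increasing. Suppose it stayed bounded by some $M$ at some coordinate. I first want boundedness in all coordinates. The upper bound in \eqref{eq-bounds-h} involves $m'=\max h_{\lambda'}$, which is the quantity in question, so I would use instead the same open-set argument. If $\max_i h_\lambda(i)\leq M$ along $\lambda\downarrow\lambda_c$, the contraction constant $\beta_{M+1}$ is uniform. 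The perturbation argument then extends $\Lambda$ below $\lambda_c$ by a uniform amount $\epsilon(M)>0$, which is a contradiction.
\item To pass from "$\max_i\to\infty$" to "every coordinate $\to\infty$", use irreducibility. Since $h_\lambda(i)=F_i(h_\lambda,\lambda)\geq\delta\underline c-\lambda+\log(\eta e^{h_\lambda(j)})$ for suitable successor chains, a large value at any state propagates backward along paths of positive probability to all states. Equivalently, $e^{h_\lambda(i)}\geq\underline\alpha e^{(\delta\underline c-\lambda)\overline L}\,e^{h_\lambda(j)}$-type bounds hold along paths to each $j\neq n$.
\end{itemize}

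For (iii), take $m>m^*$ and $\lambda\in(\lambda^*-\frac{m-m^*}{\widetilde N_*(m,\lambda^*)},\infty)$.
\begin{itemize}
\item If $\lambda\geq\lambda^*$, then by (i) $h_\lambda\leq h^*\leq m^*<m$.
\item If $\lambda<\lambda^*$, I need $\lambda\in\Lambda$ and $\max h_\lambda\leq m$. Let $\lambda_m\doteq\inf\{\lambda<\lambda^*: \max h_\lambda\leq m\}$. By continuity (from the bi-Lipschitz bounds) and the blow-up in (ii), $\lambda_m>\lambda_c$ and $\max_i h_{\lambda_m}(i)=m$. Apply the lower bound of Proposition~\ref{lem-bounds-h} with the pair $(\lambda^*,\lambda_m)$, whose larger element has max $m^*$:
\[
m-m^*\geq h_{\lambda_m}(i)-h^*(i)\geq \widetilde N_*(m^*,\lambda^*)(\lambda^*-\lambda_m).
\]
Since $\widetilde N_*(\cdot,\lambda)$ is decreasing in its first argument, this is at least $\widetilde N_*(m,\lambda^*)(\lambda^*-\lambda_m)$. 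Hence $\lambda_m\leq\lambda^*-\frac{m-m^*}{\widetilde N_*(m,\lambda^*)}$, and every $\lambda$ in the stated interval has $\max h_\lambda\leq m$.
\end{itemize}

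The main obstacle is the openness and blow-up at $\lambda_c$ in (ii). It requires a careful local fixed-point extension with $m$-dependent contraction constants, plus the propagation of unboundedness to every coordinate.
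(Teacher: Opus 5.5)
\textbf{Overall.} Your argument for (i) is fine. Your openness argument in (ii) is also sound: you build a local Banach fixed point around $h_{\lambda_c}$ using $F(h,\lambda)=F(h,\lambda_c)+(\lambda_c-\lambda)e$ and Proposition~\ref{thm-contraction}. (Take the radius from $\|h_{\lambda_c}\|_\infty$, not from $\max_i h_{\lambda_c}(i)$.) This is more careful than the paper, which applies Proposition~\ref{lem-bounds-h} to a $\lambda'$ not yet known to lie in $\Lambda$.

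\textbf{The gap in (iii).} The last step reverses an inequality. From $m-m^*\ge \widetilde N_*(m,\lambda^*)(\lambda^*-\lambda_m)$ you get $\lambda_m\ge\lambda^*-\frac{m-m^*}{\widetilde N_*(m,\lambda^*)}$, not $\le$. Also, $m-m^*\ge h_{\lambda_m}(i)-h^*(i)$ is only guaranteed at a maximizer of $h^*$. The lower Lipschitz bound says $h_\lambda$ increases at least at rate $\widetilde N_*$ as $\lambda$ decreases. So what it yields is the reverse inclusion $\Lambda_m\subset[\lambda^*-\frac{m-m^*}{\widetilde N_*(m,\lambda^*)},\infty)$.

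To push $\lambda_m$ far from $\lambda^*$ you need the upper bound of Proposition~\ref{lem-bounds-h}. Apply it at a maximizer $i_0$ of $h_{\lambda_m}$, with $m'=\max_i h_{\lambda_m}(i)=m$:
\[
m-m^*\le h_{\lambda_m}(i_0)-h^*(i_0)\le \widetilde N^*(m,\lambda_m)(\lambda^*-\lambda_m)\le \widetilde N^*(m,\lambda^*)(\lambda^*-\lambda_m),
\]
where the last step uses that $\widetilde N^*(m,\cdot)$ is increasing. This proves the inclusion with $\widetilde N^*(m,\lambda^*)$, which is the constant the paper's proof actually uses. Your first-exit point $\lambda_m$ also removes a circularity in the paper's proof, which substitutes $m$ for $\max_i h_\lambda(i)$ before knowing $\lambda\in\Lambda_m$.

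With $\widetilde N_*$, as printed in the statement, the inclusion is false. Take $S=\{1,2\}$, $n=2$, $p(1,2,\cdot)=p(2,1,\cdot)=1$, $c\equiv 0$. Then $h_\lambda=(-\lambda,-2\lambda)$, $\lambda^*=m^*=0$, and $\widetilde N_*(m,0)=1$ because $\tau_n\equiv 1$ from state $1$. The claimed interval is $(-m,\infty)$, whereas $\Lambda_m=[-m/2,\infty)$.

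\textbf{Two steps in (ii) cannot be completed.} The claims themselves fail in general.
\begin{itemize}
\item \emph{Finiteness of $\lambda_c$.} If $\tau_n$ is bounded under some policy, then $\E[e^{a\tau_n}]<\infty$ for every $a$. In the example above $\Lambda=\RR$, so you must allow $\lambda_c=-\infty$; the blow-up is then immediate from Lemma~\ref{lem-hl}.
\item \emph{Propagation to every coordinate.} A large $h_\lambda(j)$ enters $F_i(h_\lambda,\lambda)$ only through a transition $i\to j$, $j\neq n$, with positive probability under the minimizing control. There need not be any path from $i$ to $j$ that avoids $n$. Take $S=\{1,2,3\}$, $n=3$, $p(1,1)=p(1,3)=\tfrac12$, $p(2,3)=1$, $p(3,1)=p(3,2)=\tfrac12$, $c\equiv 0$. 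Then $h_\lambda(1)=\log\frac{e^{-\lambda}}{2-e^{-\lambda}}\to\infty$ as $\lambda\downarrow\lambda_c=-\log 2$, but $h_\lambda(2)=-\lambda\to\log 2$.
\end{itemize}
What your perturbation argument does establish is $\max_i h_\lambda(i)\to\infty$ as $\lambda\downarrow\lambda_c$. That is all the first-exit construction in (iii) needs.
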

	\begin{proof}
		Part (i) follows immediately from the application of  Proposition~\ref{prop-ESSP}(ii) and the fact that   $h_\lambda=\log V_\lambda$ and $h_{\lambda'}=\log V_{\lambda'}$.
		
		To prove part (ii), we apply Lemma~\ref{lem-hl} and Proposition~\ref{lem-bounds-h} to get
		$$ (\delta c-\lambda') \underline \tau \leq h_{\lambda'}(i)\leq h_\lambda(i)- \widetilde N_*(m,\lambda) (\lambda'-\lambda), \quad  \text{ for $1\leq i\leq n$,}$$
		where $m=\max_{1\leq i\leq n} h_\lambda (i)$ and $\widetilde N^*(m,\lambda)$ is defined in~\eqref{eq-N}. From here, we can conclude that for any $\lambda'>\lambda$, $h_{\lambda'}$ exists. In other words, $\Lambda$ is of the form $(\lambda_c,\infty)$ for some $\lambda_c\in \RR$. The rest of the proof for part (ii) follows trivially from here.
		
		To prove part (iii), we apply Proposition~\ref{lem-bounds-h} with $\lambda^*$ and $\lambda\in \Lambda$ such that $\lambda^*\geq \lambda\geq \lambda^*- \frac{m-m^*}{\widetilde N^*(m,\lambda^*)}$. This gives us $$h_{\lambda}(i)-h^*(i)\leq \widetilde N^*(m,\lambda) (\lambda^*-\lambda)\leq \frac{N^*(m,\lambda)(m-m^*)}{N(m,\lambda^*)}\,.$$ Using the fact that $\widetilde N^*(m,\lambda)\leq \widetilde N^*(m,\lambda^*)$ (which follows from the definition) and   the definition of $m^*$, we obtain that 
		$ h_{\lambda}(i)\leq m-m^*+h^*(i)\leq m\,.$
		Taking the maximum over $1\leq i\leq n$ and from the definition of $\Lambda_m$, we conclude the claim in part (iii). 
	\end{proof}

\section{Proof of Theorem~\ref{thm-main-1}} \label{sec-proof-main-1} 

We split the proof into two cases: $\{(V^k,\lambda^k)\}_{k\in \NN}$ is the sequence of  iterate pairs  of either  Algorithm~\ref{alg-rvi-bertsekas-JI} or   Algorithm~\ref{alg-rvi-bertsekas-GS}.   To begin with, recall that for $\lambda\in \Lambda$, $h_\lambda$ satisfies $h_\lambda=F(h_\lambda,\lambda)$ (or  $h_\lambda=G(h_\lambda,\lambda)$, as $F$ and $G$ inherit identical fixed point equations).  We fix $(h^0,\lambda^0)$ such that $ \lambda^*\leq \lambda^0\leq \overline c$, according to the hypothesis of Theorem~\ref{thm-main-1}; see also Remark~\ref{rem-init}.  Since $h^*$ exists, from Proposition~\ref{coro-connectivity_Lambda}(i), we can also infer that $h_{\lambda^0}$ exists, \emph{i.e.}, the operator $F(\cdot,\lambda^0)$ has  a unique fixed point denoted by $h_{\lambda^0}.$   

\subsection{Proof of~\eqref{eq-itr-contraction-1} in the case of  Algorithm~\ref{alg-rvi-bertsekas-JI}}\label{sec-proof-JI}

Set
\begin{equation}\label{def-m}
	\begin{aligned}
		m= \frac{1}{w^1_0}\Big\{&\big(\|h^0-h^*\|_{\infty}+\|h^*\|_{\infty}+ |\lambda^*-\lambda^0|\big)\vee \max_{1\leq i\leq n}h_{\lambda^0}(i)\Big\},\\
		N_*(m) &\doteq \widetilde N_*(m, \bar c), \quad\quad \quad N^*(m) \doteq \widetilde N^*(m, \bar c),\\
		\alpha_0&\doteq \big(\|h^0-h_{\lambda^0}\|_m\big) \vee \big(N_*(m) (\lambda^0-\lambda^*)\big)\,.
	\end{aligned}
\end{equation}
	We begin by proving the following result concerning the one-iteration behavior of Algorithm~\ref{alg-rvi-bertsekas-JI}.
	\begin{proposition}\label{prop-c-alpha0}
		Suppose for any $k\in \ZZ_+$,  $0<\alpha\leq \alpha_0$ and the following holds: 
		\begin{equation}\label{eq-alpha2}
			\begin{aligned}
				\|h^{k}-h_{\lambda^k}\|_m\leq \alpha  \quad&\text{and}\quad	|\lambda^{k}-\lambda^*|\leq  \frac{\alpha}{N_*(m)},\\
				\|h^{k}\|_\infty\leq m\quad&\text{and}\quad \lambda^{k}\in \Lambda_m\,.	
			\end{aligned}
		\end{equation}
		Then, whenever $|\lambda^{k+1}-\lambda^*|\leq N_*(m)^{-1} \alpha$ and
		\begin{equation}\label{eq-gamma-cond}\gamma_k\leq \Big(\frac{m-m^*}{ N_*(m)}\Big)\Big( \frac{1}{\frac{\alpha_0}{N_*(m)}+\overline c-\underline c+\varkappa(m)}\Big),\end{equation} 
		we have $\|h^{k+1}\|_\infty\leq m$ and $\lambda^{k+1}\in \Lambda_m$. Here, {$\varkappa(m)\doteq \max_{u\in \bU(n)}\{ (1-p(n,n,u))m\}$.} 
		
	\end{proposition}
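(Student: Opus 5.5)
The claim has two parts, $\lambda^{k+1}\in\Lambda_m$ and $\|h^{k+1}\|_\infty\le m$; I will prove the first first and then use it for the second.

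\emph{The membership $\lambda^{k+1}\in\Lambda_m$.} By Proposition~\ref{coro-connectivity_Lambda}(iii), every $\lambda>\lambda^*-\frac{m-m^*}{\widetilde N_*(m,\lambda^*)}$ lies in $\Lambda_m$. Since $\lambda^*\le\overline c$ and $\widetilde N_*(m,\cdot)$ is nonincreasing in its second argument, we have $N_*(m)=\widetilde N_*(m,\overline c)\le \widetilde N_*(m,\lambda^*)$. It therefore suffices to show
\begin{equation*}
\lambda^{k+1}\ge \lambda^*-\frac{m-m^*}{N_*(m)}.
\end{equation*}
Write $\lambda^{k+1}=\lambda^k+\frac{\gamma_k}{\delta}h^{k+1}(n)$. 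From the hypothesis, $\lambda^k\ge\lambda^*-\alpha_0/N_*(m)$, so we need a lower bound on the increment $\frac{\gamma_k}{\delta}h^{k+1}(n)$. Using \eqref{eq-F-var-rep} (or directly \eqref{eq-hjb-op}) with $\|h^k\|_\infty\le m$ and $\tilde h^k(n)=0$, we get
\begin{equation*}
h^{k+1}(n)=F_n(h^k,\lambda^k)\ge \delta\underline c-\lambda^k+\min_{u\in\bU(n)}\log\Big(p(n,n,u)+e^{-m}\big(1-p(n,n,u)\big)\Big).
\end{equation*}
By concavity of $\log$, the last term is at least $-\max_{u}(1-p(n,n,u))m=-\varkappa(m)$. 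Moreover $\lambda^k\le \lambda^*+\alpha_0/N_*(m)\le\overline c+\alpha_0/N_*(m)$. So the increment is bounded below by $-\gamma_k\big(\frac{\alpha_0}{N_*(m)}+\overline c-\underline c+\varkappa(m)\big)$, up to the bookkeeping of $\delta$, which is absorbed in the scaling of the constants. Condition \eqref{eq-gamma-cond} then makes the step-size contribution at most $\frac{m-m^*}{N_*(m)}$. Combining this with $\lambda^k$ gives $\lambda^{k+1}$ within the window, after also using the assumed bound $|\lambda^{k+1}-\lambda^*|\le\alpha/N_*(m)$. In fact this assumed bound already gives $\lambda^{k+1}\ge\lambda^*-\alpha_0/N_*(m)$, and it suffices once $\alpha_0\le m-m^*$. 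That inequality follows from the choice of $m$ in \eqref{def-m}, via $\alpha_0\le\|h^0-h_{\lambda^0}\|_m\vee N_*(m)(\lambda^0-\lambda^*)$ compared with $m-m^*$. I would try this simpler route first and keep the $\gamma_k$ argument above as the fallback that genuinely uses \eqref{eq-gamma-cond}.

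\emph{The bound $\|h^{k+1}\|_\infty\le m$.} We have $h^{k+1}=F(h^k,\lambda^k)$ and $h_{\lambda^k}=F(h_{\lambda^k},\lambda^k)$. Since $\lambda^k\in\Lambda_m$ and Lemma~\ref{lem-hl} bounds $h_{\lambda^k}$ from below, $h_{\lambda^k}$ is bounded in sup norm. Proposition~\ref{thm-contraction} then yields
\begin{equation*}
\|h^{k+1}-h_{\lambda^k}\|_m\le\beta_m\alpha.
\end{equation*}
The upper bound follows from $h^{k+1}(i)\le h_{\lambda^k}(i)+\beta_m\alpha$. The lower bound is more direct, since $F_i(h^k,\lambda^k)\ge\delta\underline c-\lambda^k-m$ and $F_i\le\delta\overline c-\lambda^k+m$ by monotonicity of $F$ in $h$. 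These two bounds must be combined with the definition of $m$, whose prefactor $1/w_0^1$ provides the slack needed.

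\emph{Main obstacle.} The hard step is making the constants close. The contraction estimate requires $\|h_{\lambda^k}\|_\infty\le m$ rather than only $\max_i h_{\lambda^k}(i)\le m$. To get this I would use Proposition~\ref{coro-connectivity_Lambda}(i) together with $\lambda^k\le\lambda^0$ and $\lambda^k\ge\lambda^*-\alpha_0/N_*(m)$, sandwiching $h_{\lambda^k}$ between $h_{\lambda^0}$ and a bound from Proposition~\ref{lem-bounds-h}. I would then check that the resulting sup-norm bound on $h^{k+1}$ fits under $m$ as defined in \eqref{def-m}.
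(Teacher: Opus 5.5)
Neither half of your argument closes.

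For $\lambda^{k+1}\in\Lambda_m$, three things go wrong. (1) Your reduction to Proposition~\ref{coro-connectivity_Lambda}(iii) runs the wrong way. Since $N_*(m)=\widetilde N_*(m,\overline c)\le\widetilde N_*(m,\lambda^*)$, the point $\lambda^*-\frac{m-m^*}{N_*(m)}$ lies to the \emph{left} of the endpoint $\lambda^*-\frac{m-m^*}{\widetilde N_*(m,\lambda^*)}$. A lower bound by it therefore does not put $\lambda^{k+1}$ in the interval of (iii). The paper's own appeal to (iii) with the constant $N_*(m)$ has the same mismatch, so the constants need care on any route. (2) The ``simpler route'' needs $\alpha_0\le m-m^*$, and this does not follow from \eqref{def-m}. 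The quantity $\alpha_0\ge\|h^0-h_{\lambda^0}\|_m$ involves $h^*-h_{\lambda^0}$, which by \eqref{eq-lb-tau} is at least an expected return time times $\lambda^0-\lambda^*$. Nothing in \eqref{def-m} controls that: it contains only $|\lambda^0-\lambda^*|$ and an upper bound on $h_{\lambda^0}$. Were $\alpha_0\le m-m^*$ automatic, \eqref{eq-gamma-cond} would be superfluous. (3) The fallback adds two deficits. Combining $\lambda^k\ge\lambda^*-\alpha_0/N_*(m)$ with a step of at least $-\frac{m-m^*}{N_*(m)}$ only gives $\lambda^{k+1}\ge\lambda^*-\frac{\alpha_0+m-m^*}{N_*(m)}$, which is outside the window.

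The missing idea is the paper's case split, which avoids quantitative estimates whenever the step is nonnegative. In that case $\lambda^{k+1}\ge\lambda^k$, and Proposition~\ref{coro-connectivity_Lambda}(i) gives $h_{\lambda^{k+1}}\le h_{\lambda^k}\le m$ directly from $\lambda^k\in\Lambda_m$. Your Jensen lower bound on $h^{k+1}(n)$ is correct and matches the paper's. It is needed, together with \eqref{eq-gamma-cond}, only when the step is negative and $\lambda^k>\lambda^*$. There $\lambda^{k+1}-\lambda^*\ge\lambda^{k+1}-\lambda^k$, so the position of $\lambda^k$ costs nothing. The remaining sub-case, $\lambda^k\le\lambda^*$ with a negative step, is dismissed in the paper by asserting $\lambda^{k+1}\ge\lambda^k$, which the hypotheses do not imply. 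A complete proof must treat it separately.

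For $\|h^{k+1}\|_\infty\le m$, anchoring at $h_{\lambda^k}$ cannot work. Membership $\lambda^k\in\Lambda_m$ controls only $\max_i h_{\lambda^k}(i)$, so $h^{k+1}(i)\le h_{\lambda^k}(i)+w^m_i\beta_m\alpha$ may exceed $m$. The contraction step also requires $\|h_{\lambda^k}\|_\infty\le m$, which Lemma~\ref{lem-hl} does not supply. Your crude bounds $\delta\underline c-\lambda^k-m\le h^{k+1}(i)\le\delta\overline c-\lambda^k+m$ overshoot $[-m,m]$. At least one of $\delta\overline c-\lambda^k$ and $\lambda^k-\delta\underline c$ is positive unless $c$ is constant. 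The prefactor in \eqref{def-m} multiplies specific quantities, so it gives no slack against these terms.

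The paper instead anchors at $h^*=F(h^*,\lambda^*)$, which is exactly what \eqref{def-m} is built from. Using $F(h,\lambda)=F(h,\lambda^*)+(\lambda^*-\lambda)e$, $\|h^*\|_\infty\le m$ and Proposition~\ref{thm-contraction}, one gets $\|h^{k+1}\|_m\le\beta_m\|h^k-h^*\|_m+\|h^*\|_m+|\lambda^k-\lambda^*|\,\|e\|_m$. This is then compared with $\|h^0-h^*\|_\infty+\|h^*\|_\infty+|\lambda^0-\lambda^*|$ via \eqref{norm-rel}. That comparison needs a bound such as $\|h^k-h^*\|_\infty\le\|h^0-h^*\|_\infty$ to be carried along the iteration. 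It is not among the hypotheses \eqref{eq-alpha2} and the paper leaves it implicit, so a complete argument has to build such a bound into the induction.
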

	\begin{proof}
		 To prove that $\|h^{k+1}\|_\infty\leq m$, using~\eqref{norm-rel}, we get
		\begin{equation*}
			\begin{aligned}
				\|h^{k+1}\|_\infty\leq \|h^{k+1}\|_m&\leq  \|F(h^k,\lambda^*) - F(h^*,\lambda^*) \|_m+ \|h^*\|_m+ |\lambda^k-\lambda^*|\\
				&\leq \beta_m \|h^k-h^*\|_m+\|h^*\|_m + |\lambda^0-\lambda^*|\\
				&\leq \frac{1}{w^0_1}\big( \|h^k-h^*\|_\infty+\|h^*\|_\infty)+ |\lambda^0-\lambda^*|\,.
			\end{aligned}
		\end{equation*} 
		In the above, to get the first line, we use the definition of $h^{k+1}$, the fact that $h^*$ satisfies $h^*=F(h^*,\lambda^*)$ and the triangle inequality; the second and third lines are consequences of Proposition~\ref{thm-contraction}, the fact that $\beta_m < 1$  and $|\lambda^k-\lambda^0|\leq |\lambda^0-\lambda^*|$ (which is clear from the hypothesis),  and~\eqref{norm-rel}. From the definition of $m$, we can infer that $\|h^{k+1}\|_\infty\leq m$.
		
		To prove that $\lambda^{k+1}\in \Lambda_m$, we consider two cases: \\
		{\noindent \bf Case (a): $\lambda^k\leq \lambda^*$.} Since $|\lambda^{k+1}-\lambda^*|\leq N_*(m)^{-1} \alpha$ from the hypothesis, it is clear that $\lambda^{k}\leq \lambda^{k+1}$ and from Proposition~\ref{coro-connectivity_Lambda}(i), we know that $h_{\lambda^{k+1}}$ exists and $ h_{\lambda^{k+1}}(i)\leq h_{\lambda^{k}}(i)\leq m,$ for $1\leq i\leq n$. This proves that $\lambda^{k+1}\in \Lambda_m$.
		
		{\noindent \bf Case (b): $\lambda^k>\lambda^*$.}    We further split the proof into two sub-cases: (b1) $h^{k+1}(n)> 0$ and (b2) $h^{k+1}(n)\leq 0$. Under Case (b1), 
		the proof is similar to that of proof in Case (a) as $\lambda^{k}< \lambda^{k+1}$. 
		
		For  Case (b2), we obtain a lower bound on $h^{k+1}(n)$  as follows: let $u^*\in\bU(n)$ be such that 
		\begin{equation*}
			{ \begin{aligned}
					h^{k+1}(n)&= c(n,u^*)  +  \log \Big(p(n,n,u^*) + \sum_{j =1}^{n-1} e^{h^k(j)}p(n,j,u^*)\Big){-\lambda^k}\\
					&\geq \underline c+ \sum_{j =1}^{n-1} {h^k(j)}p(n,j,u^*){-\lambda^k}\\
					&\geq \underline c - (1-p(n,n,u^*))m{-\lambda^k}\\
					&\geq \underline c-\lambda^k-\lambda^*+\lambda^* -\varkappa(m)\\
					&\geq -\frac{\alpha}{N_*(m)}-\overline c+\underline c-\varkappa(m)\,.
			\end{aligned}}
		\end{equation*}
		In the above, to get the first inequality, we use Jensen's inequality; to get the second inequality, we use the fact that $\|h^k\|_\infty\leq m$; { to get the third inequality, we add and subtract $\lambda^*$ together with  $\varkappa(m)\ge (1-p(n,n,u^*))m$}, and to get the fourth inequality, we use the second inequality in~\eqref{eq-alpha2} and the fact that $\underline c\leq \lambda^*\leq \overline c$. We then obtain \begin{equation*}
			\lambda^{k+1}-\lambda^*= \lambda^k-\lambda^* +\gamma_kh^{k+1}(n)\geq \gamma_k\Big(-\frac{\alpha}{N_*(m)}-\overline c+\underline c-\varkappa(m)\Big)\,.
		\end{equation*}
		From Proposition~\ref{coro-connectivity_Lambda}(iii), the fact that $\alpha\leq \alpha_0$ and the hypothesis of the lemma, $\lambda^{k+1}\in \Lambda_m$.
		This completes the proof. 
	\end{proof}
	
	\begin{proposition}\label{prop-c-alpha}
		Suppose for any $k\in \ZZ_+$ and  $0<\alpha\leq \alpha_0$,~\eqref{eq-alpha2} holds.
		Then, we have
		\begin{equation}\label{eq-c-alpha2}
			\|h^{k+1}-h_{\lambda^{k+1}}\|_m\leq \widetilde c_h \alpha, \quad\text{and}\quad	|\lambda^{k+1}-\lambda^*|\leq \widetilde  c_\lambda  \frac{\alpha}{N_*(m)},
		\end{equation} for some constants $\widetilde c_h=\widetilde c_h(\gamma_k,m)$, $\widetilde c_\lambda=\widetilde  c_\lambda(\gamma_k,m)$, such that  $\widetilde  c_h,\widetilde c_\lambda\in (0,1)$, whenever $\gamma_k\in (0, \widetilde  \gamma]$, for some $\widetilde\gamma= \widetilde \gamma(m)>0 $.			
	\end{proposition}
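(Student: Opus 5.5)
The plan is to follow the one-step heuristic sketched at the end of Section~\ref{sec-main-result}. I will first control $\lambda^{k+1}-\lambda^*$ through $h^{k+1}(n)$, and then control $h^{k+1}-h_{\lambda^{k+1}}$ by a triangle inequality that passes through $h_{\lambda^k}$. Throughout, $\lambda^k\in\Lambda_m$ guarantees that $h_{\lambda^k}$ exists with $\max_i h_{\lambda^k}(i)\le m$. Lemma~\ref{lem-hl} gives a lower bound of order $(\delta\underline c-\overline c)\underline\tau$. After possibly enlarging $m$ (harmless, since it only worsens $\beta_m<1$), both $h^k$ and $h_{\lambda^k}$ lie in the ball where Proposition~\ref{thm-contraction} applies.

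\emph{Step 1 (the $\lambda$-update).} Decompose
\[
h^{k+1}(n)=h_{\lambda^k}(n)+\big[F_n(h^k,\lambda^k)-F_n(h_{\lambda^k},\lambda^k)\big].
\]
Since $w^m_n=1$, Proposition~\ref{thm-contraction} bounds the bracket in absolute value by $\beta_m\|h^k-h_{\lambda^k}\|_m\le\beta_m\alpha$. For the first term I use $h_{\lambda^*}(n)=0$ together with Proposition~\ref{lem-bounds-h} applied to the pair $\{\lambda^k,\lambda^*\}$. This yields, with $s\doteq\operatorname{sgn}(\lambda^k-\lambda^*)$,
\[
N_*(m)\,|\lambda^k-\lambda^*|\le -s\,h_{\lambda^k}(n)\le N^*(m)\,|\lambda^k-\lambda^*|.
\]
Here I need to check that the local constants $\widetilde N_*,\widetilde N^*$ of \eqref{eq-N} are dominated uniformly by $N_*(m)=\widetilde N_*(m,\overline c)$ and $N^*(m)=\widetilde N^*(m,\overline c)$. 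This should follow from their monotonicity in $(m,\lambda)$, together with $\lambda^k\le\overline c$ and the fact that all relevant maxima of $h_\lambda$ are at most $m$.

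Substituting into $\lambda^{k+1}-\lambda^*=(\lambda^k-\lambda^*)+\frac{\gamma_k}{\delta}h^{k+1}(n)$ shows that the drift term pulls toward $\lambda^*$. Choose $\gamma_k\le\delta/N^*(m)$, so that no overshoot occurs from the main term. Then
\[
|\lambda^{k+1}-\lambda^*|\le\Big(1-\tfrac{\gamma_k}{\delta}N_*(m)\Big)|\lambda^k-\lambda^*|+\tfrac{\gamma_k}{\delta}\beta_m\alpha.
\]
Using $|\lambda^k-\lambda^*|\le\alpha/N_*(m)$, this is at most $\frac{\alpha}{N_*(m)}\big(1-\frac{\gamma_k}{\delta}N_*(m)(1-\beta_m)\big)$. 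So I can take $\widetilde c_\lambda\doteq 1-\gamma_kN_*(m)(1-\beta_m)/\delta\in(0,1)$.

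\emph{Step 2 (the $h$-update).} Step 1 gives $|\lambda^{k+1}-\lambda^*|\le\alpha/N_*(m)$. For $\gamma_k$ also satisfying \eqref{eq-gamma-cond}, Proposition~\ref{prop-c-alpha0} then gives $\lambda^{k+1}\in\Lambda_m$, so $h_{\lambda^{k+1}}$ exists. Now write
\[
\|h^{k+1}-h_{\lambda^{k+1}}\|_m\le\|F(h^k,\lambda^k)-F(h_{\lambda^k},\lambda^k)\|_m+\|h_{\lambda^k}-h_{\lambda^{k+1}}\|_m .
\]
The first term is at most $\beta_m\alpha$ by Proposition~\ref{thm-contraction}. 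For the second term, Proposition~\ref{lem-bounds-h} and \eqref{norm-rel} give a bound of $\frac{N^*(m)}{w^0_1}|\lambda^{k+1}-\lambda^k|$. By Step 1 we have $|\lambda^{k+1}-\lambda^k|=\frac{\gamma_k}{\delta}|h^{k+1}(n)|\le\frac{\gamma_k}{\delta}\big(\tfrac{N^*(m)}{N_*(m)}+\beta_m\big)\alpha$. Hence
\[
\widetilde c_h\doteq\beta_m+\gamma_k\,C(m),\qquad C(m)\doteq\frac{N^*(m)}{\delta w^0_1}\Big(\frac{N^*(m)}{N_*(m)}+1\Big).
\]
Finally, take $\widetilde\gamma(m)$ to be the minimum of $\delta/N^*(m)$, the right side of \eqref{eq-gamma-cond}, and $(1-\beta_m)/(2C(m))$. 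With this choice both $\widetilde c_h$ and $\widetilde c_\lambda$ lie in $(0,1)$.

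\emph{Main obstacle.} The delicate point is Step 1's use of Proposition~\ref{lem-bounds-h}. Its constants depend on $\max_i h_\lambda(i)$ and on $\lambda$, and the two sides involve different arguments ($\widetilde N^*(m',\lambda')$ versus $\widetilde N_*(m,\lambda)$). I must verify that, for $\lambda^k$ on either side of $\lambda^*$, these constants are uniformly sandwiched by $N_*(m)$ and $N^*(m)$ evaluated at $\overline c$. This requires checking the monotonicity of the exponents $\pm\big((\delta\underline c-\lambda)\underline\tau-m\big)$ in $\lambda$, and confirming that $\lambda^k\le\overline c$ is preserved; the latter I would obtain from the non-overshoot choice of $\gamma_k$.
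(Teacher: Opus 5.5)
Up to one point below, which the paper's proof shares, your argument is correct, and for the $\lambda$-estimate it is genuinely simpler than the paper's.

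\textbf{Comparison with the paper.} The paper (Appendix~\ref{app-c-alpha2}) follows Bertsekas. It introduces auxiliary points $\underline\lambda<\widehat\lambda<\overline\lambda<\lambda^*$ with $h_{\underline\lambda}(n)=\alpha$ and $h_{\overline\lambda}(n)=\alpha\beta_m$. It then splits into $\lambda^k\le\widehat\lambda$ and $\widehat\lambda<\lambda^k\le\lambda^*$, with sub-cases on the sign of $h^{k+1}(n)$ (Lemmas~\ref{lemma-geom_conv_lambda} and~\ref{lemma-geom_conv_lambda-1}). The case $\lambda^k>\lambda^*$ is left to an omitted mirror argument, and the result is $\widetilde c_\lambda=\max\{c_1(\gamma,m),(1+\beta_m)/2\}$.

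You instead split $\lambda^{k+1}-\lambda^*$ into the main part $(\lambda^k-\lambda^*)+\tfrac{\gamma_k}{\delta}h_{\lambda^k}(n)$ and the perturbation $\tfrac{\gamma_k}{\delta}\big(h^{k+1}(n)-h_{\lambda^k}(n)\big)$. You then use the two-sided bound of Proposition~\ref{lem-bounds-h} directly. The upper constant with $\gamma_k\le\delta/N^*(m)$ keeps the main part on the same side of $0$ as $\lambda^k-\lambda^*$. The lower constant bounds its modulus by $(1-\tfrac{\gamma_k}{\delta}N_*(m))|\lambda^k-\lambda^*|$.

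The paper imposes the same no-overshoot bound ($\gamma_k\le 1/N^*(m)$ in Lemma~\ref{lemma-geom_conv_lambda}, where the factor $1/\delta$ is dropped). So the auxiliary points add nothing. Your version handles both signs of $\lambda^k-\lambda^*$ at once and gives the sharper factor $1-\tfrac{\gamma_k}{\delta}N_*(m)(1-\beta_m)$. The paper's constant is at least $1-\gamma(1-\beta_m)N_*(m)^2/(2N^*(m))$.

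For the $h$-estimate you pass through $F(\cdot,\lambda^k)$, whereas Lemma~\ref{lemma-geom_conv_h_k} passes through $F(\cdot,\lambda^{k+1})$. Yours avoids the extra term $\|e\|_m|\lambda^{k+1}-\lambda^k|$ and never applies the contraction at $h_{\lambda^{k+1}}$. Both routes rely on Proposition~\ref{prop-c-alpha0} for $\lambda^{k+1}\in\Lambda_m$.

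\textbf{The flawed claim.} One claim in your ``main obstacle'' is false, although the paper's proof has the same gap without comment. The non-overshoot choice of $\gamma_k$ does not keep the iterates below $\overline c$. Take $\lambda^0=\overline c$ and $h^0(j)$ large for $j<n$. Irreducibility and compactness give $\max_u p(n,n,u)<1$, hence $h^1(n)=F_n(h^0,\overline c)>0$ and $\lambda^1>\overline c$ for every $\gamma_0>0$. At the next step, $\widetilde N_*(\cdot,\lambda^1)$ can then be smaller than $N_*(m)=\widetilde N_*(m,\overline c)$. So your sandwich is unjustified there, and so is the paper's use of $N_*(m)$ and $N^*(m)$.

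\textbf{How to repair it.} Evaluate $\widetilde N_*$ and $\widetilde N^*$ at an a priori upper bound for the iterates instead of at $\overline c$. Indeed, \eqref{eq-alpha2} gives $\lambda^k\le\lambda^*+\alpha_0/N_*(m)$. Since $\widetilde N_*\ge 1$ by \eqref{eq-N}, we have $\alpha_0/N_*(m)\le\|h^0-h_{\lambda^0}\|_m\vee(\lambda^0-\lambda^*)$, which by \eqref{norm-rel} is bounded independently of $m$, so there is no circularity.

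Likewise, you need $\min_i h_{\lambda^k}(i)\ge -m$ to apply Proposition~\ref{thm-contraction}. The enlargement of $m$ that provides this must be built into \eqref{def-m} from the start. It cannot be done inside the proof, because $m$ also fixes the norm in \eqref{eq-alpha2} and the constants $\alpha_0$, $N_*(m)$, $N^*(m)$.
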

	The proof of Proposition~\ref{prop-c-alpha} follows very closely the arguments used in the risk-neutral setting from Proposition 2.1 in \cite{Bertsekas_VI_98} with the only difference being that  the constants involved are dependent on $m$.  We however, provide the  proof for the sake of completeness in Appendix~\ref{app-c-alpha2}.

	We now show that Propositions~\ref{prop-c-alpha0} and~\ref{prop-c-alpha} together imply the first and second inequalities in~\eqref{eq-itr-contraction-1}: from Proposition~\ref{prop-c-alpha0}, we know that $\|h^k\|_\infty\leq m$, $\lambda^k\in \Lambda_m$, for every $k\in\ZZ_+$.
	Hence, we obtain 
	\begin{equation*}
		\begin{aligned}
			\|h^{k+1}-h^*\|_m&\leq \|h^{k+1}-h_{\lambda^{k+1}}\|_m+ \|h_{\lambda^{k+1}}-h^*\|_m\\
			&\leq  \widetilde c_h(\gamma_k,m) \|h^k-h^*\|_m+ \widetilde  c_h(\gamma_k,m) \|h_{\lambda^k}-h^*\|_m  +  \|h_{\lambda^{k+1}}-h^*\|_m\\
			&\leq \widetilde  c_h(\gamma_k,m) \|h^k-h^*\|_m+ { \frac{\big(\widetilde c_h(\gamma_k,m)+\widetilde c_\lambda(\gamma_k,m)\big)N^*(m)}{w^0_1}|{\lambda^k}-\lambda^*|}\,. 
		\end{aligned}
	\end{equation*}
	In the above, to get  the second inequality we use~\eqref{eq-c-alpha2} and the triangle inequality; to get the third inequality, we apply Proposition~\ref{lem-bounds-h} to the pairs $(h_{\lambda^k},h^*)$ and $(h_{\lambda^{k+1}},h^*)$ and use~\eqref{eq-c-alpha2}.
	
	Therefore, the first and second inequalities of~\eqref{eq-itr-contraction-1} are implied by setting $\bar \gamma=\min\{\widehat \gamma,\widetilde \gamma\}$, $c_h(\gamma,m)=\widetilde c_h(\gamma,m)$, $c_\lambda(\gamma,m)=\widetilde c_\lambda(\gamma,m)$ and  $$L(m)=\frac{\big(\widetilde c_h(\gamma_k,m)+\widetilde c_\lambda(\gamma_k,m)\big)N^*(m)}{w^0_1}\,.$$

	\subsection{Proof of~\eqref{eq-itr-contraction-1} in the case of Algorithm~\ref{alg-rvi-bertsekas-GS}} \label{sec-proof-GS}
	In this section, we suppose that $\{(V^k,\lambda^k)\}_{k\in\NN}$ is the sequence of iterate pairs of Algorithm~\ref{alg-rvi-bertsekas-GS}. Again, we provide the proof in terms of $h^k=\log V^k$.  Recall operator $G$ defined in~\eqref{def-G}.
	Also, recall that in terms of operator $G$ and $(h^k,\lambda^k)$, Algorithm~\ref{alg-rvi-bertsekas-GS} can be equivalently expressed as in~\eqref{eq-G-itr}.
	
	We first show that $G(\cdot, \lambda)$ satisfies a local contraction property for any fixed $\lambda \in \Lambda$, by using the local contraction property of the operator $F(\cdot,\lambda)$  represented according to~\eqref{eq-F-alt-rep} and an argument which parallels the argument in the proof of Proposition~\ref{thm-contraction}. Before we proceed, we give a lemma, which provides a bound on $G_i(h,\lambda)-\lambda$ in terms $\|h\|_\infty$, for $1\leq i\leq n$. 
	Since $\eta$, defined in~\eqref{eq-transition_prob_const}, is strictly positive,  we have $\eta n\leq  \sum_{j=1}^n p(i,j,u)=1$. This means that  \begin{equation}\label{eq-bareta}
		0<\bar \eta\doteq1-(n-1)\eta <1\end{equation} and recall $\bar c$ from~\eqref{eq-c-r}.
	\begin{lemma}\label{lem-bound-G}For  $1\leq i\leq n$, $\lambda\in \RR$ and $m> 0$, let  $h\in \RR^n$ be such that $\|h\|_\infty\leq m$. Then, we have the following:
		\begin{equation}\label{eq-G-b} -  \frac{1}{1-\bar \eta}\leq \frac{G_i( h,\lambda)}{\bar c+\|h\|_\infty+|\lambda|}\leq  \frac{1-\chi(m+|\lambda|)^{n-1}}{1-\chi(m+|\lambda|)}\,.\end{equation}
		Here, for $l\geq 0$,
		$$\chi(l)\doteq  \max_{(i,u)\in S_\bU}\frac{(1-p(i,n,u))e^{\bar c +l}}{p(i,n,u)+(1-p(i,n,u)) e^{\bar c +l }}\,.$$
	\end{lemma}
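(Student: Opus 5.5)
The plan is an induction on the index $i$ in the recursive definition \eqref{eq-G-tilde}. Set $K\doteq \bar c+\|h\|_\infty+|\lambda|$. I assume the bounds are meant with the $\delta$ factor absorbed into $\bar c$, as in the proof of Proposition~\ref{prop-c-alpha0}.

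By \eqref{def-G}, for each $i$,
\[
G_i(h,\lambda)=\min_{u\in\bU(i)}\Big[\delta c(i,u)+\log\Big(p(i,n,u)+\sum_{j<i}e^{G_j(h,\lambda)}p(i,j,u)+\sum_{i\le j<n}e^{h(j)}p(i,j,u)\Big)\Big]-\lambda .
\]
Each $G_i$ is a cost term, minus $\lambda$, plus the log of a $p(i,\cdot,u)$-mixture of the values $1$ (at $n$), $e^{G_j}$ (for $j<i$) and $e^{h(j)}$ (for $i\le j<n$). I will prove the upper and lower bounds separately, each by strong induction on $i$, with explicit sequences $a_i$ and $b_i$ that are then summed geometrically.

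\textbf{Upper bound.} For any fixed $u$, the minimum is at most the value at $u$, so
\[
G_i\le \bar c+|\lambda|+\log\big(p+(1-p)e^{x_i}\big),\qquad p=p(i,n,u),
\]
where $x_i\doteq \max\{\|h\|_\infty,\max_{j<i}G_j\}$. For $i=1$ this gives $G_1\le \bar c+|\lambda|+\|h\|_\infty=K$, so $a_1=1$. The inductive step should produce $a_i\le 1+\chi\, a_{i-1}$ with $\chi=\chi(m+|\lambda|)$; iterating over at most $n-1$ steps then gives $a_i\le 1+\chi+\dots+\chi^{n-2}=\frac{1-\chi^{n-1}}{1-\chi}$.

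To get the factor $\chi$, I will split $x_i=\|h\|_\infty+(x_i-\|h\|_\infty)$ and bound the increment of $x\mapsto\log(p+(1-p)e^x)$. The derivative of this map is $\frac{(1-p)e^x}{p+(1-p)e^x}$. At the level $x=\bar c+m+|\lambda|$ this derivative is at most $\chi$, by the definition of $\chi$ maximised over $(i,u)$.

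\textbf{Main obstacle.} The function $x\mapsto\log(p+(1-p)e^x)$ is convex (it is a log-sum-exp), so its derivative grows beyond that level. Consequently the naive mean-value bound does not give the factor $\chi$ on the whole increment. I would first try to exploit the additive structure of $G_i$ itself. The idea is to write $e^{G_j}=e^{\delta c(j,\cdot)-\lambda}\cdot(\text{mixture})$ and factor $e^{\bar c+|\lambda|}$ through the mixture, so that the comparison takes place at the point $e^{\bar c+m+|\lambda|}$, where the weight on the non-$n$ part is exactly the quantity maximised in $\chi$. If this does not close, I would settle for a cruder ratio that is still strictly less than one.

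\textbf{Lower bound.} Here I use Jensen's inequality (concavity of $\log$), as in the proof of Proposition~\ref{prop-c-alpha0}. This gives
\[
G_i\ge \delta\underline c-\lambda+\sum_{j<i}G_j\,p(i,j,u)+\sum_{i\le j<n}h(j)\,p(i,j,u)\ge -|\lambda|-\big(1-p(i,n,u)\big)y_i,
\]
where $y_i\doteq \max\{\|h\|_\infty,\max_{j<i}(-G_j)^+\}$. Each positive transition probability is at least $\eta$ by \eqref{eq-transition_prob_const}, so every individual weight is at most $1-(n-1)\eta=\bar\eta$. The aim is therefore the recursion $b_i\le 1+\bar\eta\, b_{i-1}$ with $b_1=1$, which yields $G_i\ge -K/(1-\bar\eta)$. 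The delicate point is the same kind of bookkeeping as in the upper bound: I need the mass carried by the possibly large earlier terms $G_j$ to be at most $\bar\eta$, rather than merely at most one. I would handle this by tracking only the single worst index $j$ in the induction.
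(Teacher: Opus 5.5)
Your route is the paper's own. Its lower bound comes from the variational formula with $q=p$, which is your Jensen step. Its upper bound comes from $f(x)=\log(\rho+(1-\rho)e^{x})$ evaluated at the largest earlier value, together with $f(x)\le xf'(x)$ for $x\ge0$. However, both inductive steps you leave open are genuine gaps, and they cannot be closed for \eqref{eq-G-b} as written.

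\textbf{Upper bound.} The paper carries out only $i=2$. There the argument satisfies $x\le K\le\bar c+m+|\lambda|$, so $f'(x)\le\chi(m+|\lambda|)$, and the paper then says ``similarly''. That does not remove your obstacle: for $i\ge3$ the argument can be as large as $K(1+\chi)$, past the point up to which the slope is controlled by $\chi$.

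Your count is also off by one. $a_1=1$ and $a_i\le1+\chi a_{i-1}$ give $a_i\le1+\chi+\dots+\chi^{i-1}$, which has $n$ terms at $i=n$, and at $i=n$ the stated bound is false. Take $n=2$, a single action, all $p(i,j)=\tfrac12$, $c\equiv0$, $h=0$ and $\lambda=-L<0$. Then $G_1=L$ and $G_2=L+\log\frac{1+e^{L}}{2}>L=K$, whereas $\chi<1$ makes the right-hand side of \eqref{eq-G-b} equal to $1$.

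What you can prove rigorously is the following. Split at $x_0\doteq\bar c+m+|\lambda|\ge K$, and use $f(x)\le\chi x$ on $[0,x_0]$ and $f'\le1$ beyond. This gives $a_i\le a_{i-1}+\chi$, i.e.\ $G_i\le K\,(1+(i-1)\chi)$, which is weaker than \eqref{eq-G-b}.

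\textbf{Lower bound.} The same defect appears. Your recursion $b_i\le1+\bar\eta\,b_{i-1}$ needs the mass multiplying $y_i$ to be at most $\bar\eta$; that mass is your $1-p(i,n,u)$, or at best $\sum_{j<i}p(i,j,u)$. The paper's ``similarly'' for $i\ge3$ needs the latter bound too. Already at $i=2$ it uses $p(2,1,v(2))\le\bar\eta$, which, like its claim $n\eta\le\sum_j p(i,j,u)$, presumes that every transition probability is at least $\eta$, i.e.\ that none vanishes. None of this follows from the definition of $\eta$ as the smallest \emph{positive} $\bar p(i,j)$.

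Bounds on individual weights would not suffice anyway, because the total mass on earlier indices multiplies $\max_{j<i}(-G_j)^+$. Following a single worst index does not change this.

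The inequality can in fact fail. Take $n=3$, one action, $p(1,3)=1$, $p(2,1)=p(3,2)=0.6$ and $p(2,3)=p(3,3)=0.4$. Then $\eta=0.4$, $\bar\eta=0.2$, and the claimed lower bound is $-1.25$. But with $c\equiv0$, $h=0$ and $\lambda=L\downarrow0$ one gets $G_2=\log(0.4+0.6e^{-L})-L\approx-1.6L=-1.6K$.

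\textbf{What can be proved unconditionally.} Jensen gives $G_i\ge-(\|h\|_\infty+i|\lambda|)$. Bounding the mixture by its largest term gives $G_i\le\|h\|_\infty+i(\bar c+|\lambda|)$. These linear-in-$i$ bounds suffice for the way the lemma is used in Proposition~\ref{prop-contraction-GS}, which is an a priori bound on $\|\tilde h_1\|_\infty$, once $m$ there is redefined accordingly.
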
 
	\begin{remark}Observe that $\chi(l)<1$ for every $l>0$ and $\lim_{l\to\infty}\chi(l)= 1$.
	\end{remark}
	\begin{proof}
		 Fix $\lambda\in \RR$ and $h\in \RR^n$ satisfying $\|h\|_\infty\leq m$. Now for $1\leq i\leq n$, choose $v\in \Usm$ such that $v(i)$ achieves the minimum in the definition of $G_i(h,\lambda)$. For $1\leq i\leq n$, we apply Proposition~\ref{prop-ent-var} for the following choices:  $\calX=S$, $P=p(i,\cdot,v(i))$ and $f(\cdot)=\bar G^i(\cdot)$  with $\bar G^i: \RR^n\rightarrow \RR$ defined as 
		$$ \bar G^i(j) = \begin{cases}
			G_j(h, \, \lambda), &j \leq i-1,\\
			h(j),\, &i \leq j \leq n-1,\\
			0, \, &j = n,
		\end{cases}$$
		for $1<i\leq n$ and $\bar G^1(\cdot)=h(\cdot)$.
		This gives us
		\begin{equation*}
			\scalemath{0.95}{\begin{aligned}
					G_1(h,\lambda) &=  c(1,v(1)) + \sup_{q(1,\cdot,v(1)) \in \mathcal{P}(S)}\bigg(  \sum_{j=1}^{n-1}h(j)q(1,j,v(1)) - \sum_{j =1}^n q (1,j,v(1))\log \left( \frac{q(1,j,v(1))}{p(1,j,v(1))} \right) \bigg) -\lambda,\\	
					G_i(h,\lambda) &= c(i,v(i)) +\sup_{q(i,\cdot,v(i)) \in \mathcal{P}(S)}\bigg( \sum_{j=1 }^{i-1} G_j(h,\lambda)q(i,j,v(i))  +\sum_{j=i}^{n-1}h(j)q(i,j,v(i))\\
					&\quad\quad  - \sum_{j =1}^nq (i,j,v(i))\log \left( \frac{q(i,j,v(i))}{p(i,j,v(i))} \right) \bigg)-\lambda,
			\end{aligned}}
		\end{equation*}
		for $2\leq i\leq n$. It is clear from choosing $q(i,j,v(i))= p(i,j,v(i))$ that we get
		\begin{equation}\label{eq-G1-l}
			\begin{aligned}
				G_1(h,\lambda) &\geq c(1,v(1))  + \sum_{j=1}^{n-1}h(j)p(1,j,v(1))-|\lambda|, \\
				G_i(h,\lambda) &\geq   c(i,v(i))   + \sum_{j=1 }^{i-1} G_j(h,\lambda)p(i,j,v(i))  +\sum_{j=i}^{n-1}h(j)p(i,j,v(i))-|\lambda|,
			\end{aligned}
		\end{equation}
		for $2\leq i\leq n$. From the first inequality in~\eqref{eq-G1-l}, it is clear that
		\begin{equation}\label{eq-G1-lb}
			G_1(h,\lambda)\geq -\bar c-\|h\|_\infty-|\lambda|\,.
		\end{equation}
		Now let us compute the lower bound on $G_2(h,\lambda)$. From the second inequality in~\eqref{eq-G1-l} and~\eqref{eq-G1-lb}, we get
		\begin{equation*}
			\begin{aligned}
				G_2(h,\lambda)&\geq c(2,v(2))   + G_1(h,\lambda)p(2,1,v(2))  +\sum_{j=2}^{n-1}h(j)p(2,j,v(2))-|\lambda|\\
				&\geq  -(\bar c+\|h\|_\infty+|\lambda|) - (\bar c+\|h\|_\infty+|\lambda|) p(2,1,v(2))\\
				&\geq  -(\bar c+\|h\|_\infty+|\lambda|) - (\bar c+\|h\|_\infty+|\lambda|)\bar \eta\,.
			\end{aligned}
		\end{equation*}
		Recall $\bar \eta$ from~\eqref{eq-bareta}. In the above, to arrive at the second line, we use the definition of $\bar c$, $\|h\|_\infty$ and the fact that $\sum_{j=2}^{n-1}p(2,j,v(2))<1$.  Similarly, for any $i\geq 2$, we obtain 
		\begin{equation}\label{eq-Gi-lb}G_i(h,\lambda)\geq - (\bar c+\|h\|_\infty+|\lambda|) \big(1+\bar \eta+\bar \eta^2+\ldots+\bar \eta^{i-1}\big)\geq -\frac{\bar c +m+|\lambda|}{1-\bar \eta}\,.\end{equation}
		To get the second inequality, we use the fact that $\bar \eta<1$;  see~\eqref{eq-bareta}. Therefore,~\eqref{eq-G1-lb} and~\eqref{eq-Gi-lb} together give us the desired lower bound in~\eqref{eq-G-b}.

		We next move on to the upper bound in~\eqref{eq-G-b}.  It is easy to see  that 
		\begin{equation}\label{eq-G1bound} G_1(h,\lambda)\leq \bar c +\|h\|_\infty +|\lambda|\,. \end{equation}
		Now we consider the case $i=  2$. Again, with the same $v=v(\cdot)$ as above,  we have
		\begin{equation}\label{eq-log-1} 
			\begin{aligned}\nonumber
				G_2(h,\lambda) &=  c(2,v(2)) + \log \Big(p(2,n,v(2)) +  e^{G_1(h,\lambda)}p(2,1,v(2))  +\sum_{j=2}^{n-1}e^{h(j)}p(2,j,v(2))\Big) -\lambda\\\nonumber
				&\leq \bar c +|\lambda| + \log  \Big(p(2,n,v(2)) +  e^{G_1(h,\lambda)}p(2,1,v(2))  +\sum_{j=2}^{n-1}e^{h(j)}p(2,j,v(2))\Big) \\\nonumber
				&\leq \bar c +|\lambda| + \log  \Big(p(2,n,v(2)) +  e^{G_1(h,\lambda)\vee \|h\|_\infty}  \sum_{j=1}^{n-1}p(2,j,v(2))\Big)\\
				&\leq \bar c +|\lambda| + \log  \Big(p(2,n,v(2)) +  e^{G_1(h,\lambda)\vee \|h\|_\infty}\big( 1-p(2,n,v(2))\big)\Big)\,.
			\end{aligned} 
		\end{equation}
		In the above, the first inequality is obtained from the definition of $\bar c$; the third inequality is obtained from the fact that $\sum_{j=1}^n p(2,j,v(2))=1$. Now for $\rho>0$, consider the following function $f(x)\doteq \log \big (\rho +(1-\rho)e^x\big)$. It is clear that for $x\in \RR$,
		$$ f'(x)= \frac{(1-\rho)e^x}{\rho+(1-\rho) e^x}\leq 1\,.$$
		From here, using the mean value theorem and  the fact that $f(0)=0$, we have $f(x)\leq f'(x) x$, for $x\geq 0$. Using this,~\eqref{eq-log-1} and the definition of $\chi(\cdot ) $ from the hypothesis of the lemma, we have
		\begin{equation*}
			\begin{aligned}
				G_2(h,\lambda)&\leq \bar c +|\lambda| + \chi(m+|\lambda|) \big(\bar c +\|h\|_\infty +|\lambda| \big)\vee \|h\|_\infty\\
				&\leq  \bar c +\chi(m+|\lambda|) \|h\|_\infty + |\lambda| + \chi(m+|\lambda|) \big(\bar c +\|h\|_\infty +|\lambda| \big) \\
				&\leq \bar c +\|h\|_\infty+ |\lambda| + \chi(m+|\lambda|) \big(\bar c +\|h\|_\infty +|\lambda| \big)\,.
			\end{aligned}
		\end{equation*}
		In the above, to get the first inequality, we use the fact that $f(x)\leq f'(x)x$, for the function $f$ defined above, the bound on $G_1$ from~\eqref{eq-G1bound} and the definition of $\chi(\cdot)$; to get the second inequality, we use the fact that $a\vee b\leq a+ b$, for non-negative real numbers $a$ and $b$; to get the final inequality, we use the fact that $\chi(m+|\lambda |)\leq 1$.
		
		Following this argument similarly for $2<i\leq n$,  we obtain the desired upper bound in~\eqref{eq-G-b}. This completes the proof of the lemma.
	\end{proof}
	Just like in the case of Algorithm~\ref{alg-rvi-bertsekas-JI}, local Lipschitz continuity of $G(\cdot,\cdot)$ is crucial for the proof of~\eqref{eq-itr-contraction-1}. However, due to the iterative structure of the definition of $G(h,\lambda)$, the proof is more involved than in the case of $F(h,\lambda)$. For example, local Lipschitz continuity of $G(h,\cdot)$ is also not at all immediate, unlike the local Lipschitz continuity of $F(h,\cdot)$, which directly follows from its definition.

	\begin{proposition}\label{prop-contraction-GS}
		Fix $\lambda_1, \lambda_2 \in \Lambda$ and $l>0$. Let $h_1,h_2\in \RR^n$ be such that $\|h_1\|_\infty,\|h_2\|_\infty\leq l$ and 
		$$m\doteq \big(\bar c + l + |\lambda_1|\vee|\lambda_2|\big)\max\Big\{\frac{1}{1-\bar \eta}, \frac{1-\chi(l+|\lambda_1|\vee |\lambda_2|)^{n-1}}{1-\chi(l+|\lambda_1| \vee |\lambda_2|)} \Big\},$$ 
		with $\chi(\cdot)$ as defined in Lemma~\ref{lem-bound-G}. Then, we have  
		\begin{equation}\label{eq-contraction-GS}
			\norm{G(h_1,\lambda_1) - G(h_2,\lambda_2)}_{m} \leq \beta_{ m} \norm{h_1 - h_2}_{m} + \Delta^{m}|\lambda_1 - \lambda_2|,
		\end{equation}
		where $\Delta^{m} \doteq  \max_{1\leq i\leq n}\Delta^{ m}_i$, and $\{\Delta^{ m}_i\}_{1\leq i\leq n}$ is defined recursively as follows:
		\begin{equation}
			\Delta^{ m}_1 = \frac{1}{w_1^{ m}}, \quad \Delta^{m}_i = \frac{1 + \max_{1\leq j \leq i-1}\Delta^{m}_j}{w^{m}_i}, \quad 2\leq i\leq n\,.
		\end{equation}
	\end{proposition}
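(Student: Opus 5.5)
We argue componentwise by induction on $i$, mirroring the proof of Proposition~\ref{thm-contraction} but tracking the recursive dependence of $G_i$ on $G_1,\dots,G_{i-1}$. First, by Lemma~\ref{lem-bound-G} applied with $\lambda=\lambda_1$ and $\lambda=\lambda_2$ and $\|h_r\|_\infty\le l$, every component $G_j(h_r,\lambda_r)$, $r=1,2$, is bounded in absolute value by $m$. Together with $\|h_r\|_\infty\le l\le m$, this means the ``effective input vectors'' $\bar G^{i}_r$ (defined as in the proof of Lemma~\ref{lem-bound-G}: $G_j(h_r,\lambda_r)$ for $j<i$, $h_r(j)$ for $i\le j\le n-1$, and $0$ at $n$) all have sup norm at most $m$. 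This is exactly what the choice of $m$ in the statement is designed for.

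Next, for fixed $i$, we pick $v_2(i)$ optimal for $G_i(h_2,\lambda_2)$ and use it as a suboptimal control for $G_i(h_1,\lambda_1)$. We then use the variational representation (Proposition~\ref{prop-ent-var}) as in \eqref{eq-contraction-3}--\eqref{eq-contraction-4}, with the tilted maximizer $q^*$ built from $\bar G^i_1$. This gives
\[
G_i(h_1,\lambda_1)-G_i(h_2,\lambda_2)\le \sum_{j<i}\big(G_j(h_1,\lambda_1)-G_j(h_2,\lambda_2)\big)q^*(i,j)+\sum_{j=i}^{n-1}(h_1(j)-h_2(j))q^*(i,j)+|\lambda_1-\lambda_2|,
\]
with $q^*(i,j)\ge e^{-2m}p(i,j,v_2(i))$ by the sup-norm bound on $\bar G^i_1$. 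The symmetric inequality follows by swapping roles.

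We then run the induction: assume that for $j<i$,
\[
|G_j(h_1,\lambda_1)-G_j(h_2,\lambda_2)|\le w_j^m\big(\beta_m\|h_1-h_2\|_m+\Delta_j^m|\lambda_1-\lambda_2|\big).
\]
Inserting this into the display and using $\beta_m\le1$ bounds the $h$-part by $\|h_1-h_2\|_m\sum_{j\le n-1}w_j^mq^*(i,j)$. The $\lambda$-part is at most $(1+\max_{j<i}\Delta^m_j)|\lambda_1-\lambda_2|$, using $\sum_j w_j^m q^*(i,j)\le 1$. The Tseng-type weight inequality (Lemma 3 of \cite{tseng1990solving}, as invoked in Proposition~\ref{thm-contraction}) gives $\sum_{j=1}^{n-1}w^m_jq^*(i,j)\le\beta_m w^m_i$, since $q^*$ dominates $e^{-2m}p$. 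Dividing by $w_i^m$ yields the claimed bound for component $i$ with $\Delta_i^m$ as in the recursion; the base case $i=1$ is just Proposition~\ref{thm-contraction} plus the $|\lambda_1-\lambda_2|/w_1^m$ term. Taking the maximum over $i$ gives \eqref{eq-contraction-GS}.

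The main obstacle is the first step: ensuring that the tilted kernel $q^*$ satisfies the lower bound $e^{-2m}p$, so that the weighted-norm inequality with $\beta_m$ applies. This requires the a priori bound $|G_j|\le m$ from Lemma~\ref{lem-bound-G} for the recursively updated components, uniformly in both $\lambda_1,\lambda_2$. A secondary subtlety is that the weight inequality must be applied to the mixed vector, where earlier components carry the factor $\beta_m$ while later ones do not. Bounding everything by the undamped sum $\sum_j w_j^m q^*(i,j)\le\beta_m w_i^m$ should suffice.
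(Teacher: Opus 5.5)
Your proposal is correct and follows essentially the paper's proof. Both arguments induct on the component index, use the variational representation with a tilted kernel bounded below by $e^{-2m}p$ (via the a priori bound of Lemma~\ref{lem-bound-G}), apply the weight inequality $\sum_{j<n} w^m_j q^*(i,j)\le \beta_m w^m_i$, and use the recursion defining $\Delta^m_i$. The only difference is that you treat the joint perturbation $(h_1,\lambda_1)\to(h_2,\lambda_2)$ with a single tilted kernel, whereas the paper bounds the $h$-variation at fixed $\lambda_1$ and the $\lambda$-variation at fixed $h_2$ separately and combines them by the triangle inequality; both routes give the same constants.
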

	The proof of this result is given in Appendix~\ref{app-proof-contraction-GS}.

	\subsubsection{ Completing the proof of~\eqref{eq-itr-contraction-1} in the case of Algorithm~\ref{alg-rvi-bertsekas-GS}} 
	As mentioned earlier, the proof of Theorem~\ref{thm-main-1} for Algorithm~\ref{alg-rvi-bertsekas-GS} follows from the same arguments as those used in the proof of Theorem~\ref{thm-main-1} for Algorithm~\ref{alg-rvi-bertsekas-JI}. Hence, we only discuss the differences and omit the proof, with the main difference being the explicit value of the constant 
	$\widetilde c_h(\gamma,m)$ (appearing in Lemma~\ref{lemma-geom_conv_h_k}). Therefore, to avoid repetition of the arguments  for Algorithm~\ref{alg-rvi-bertsekas-JI}, we simply provide the explicit value of the analogous constant in this case. Also, to avoid introducing new notation we simply use the existing notation. Set
	\begin{equation}\label{def-m-2}
		m=  \frac{1}{w^1_0}\Big\{\big(\|h^0-h^*\|_{\infty}+\|h^*\|_{\infty}+ \widetilde \Delta |\lambda^*-\lambda^0|\big)\vee \max_{1\leq i\leq n}h_{\lambda^0}(i)\Big\},
	\end{equation}
	\begin{equation*}\\\nonumber
		\widetilde c_h(\gamma,m)=\beta_m+ \gamma \Big(\Delta^m+\frac{\beta_m N^*(m)}{w^0_1}\Big) \Big( \beta_m+ \frac{N^*(m)}{N_*(m)}\Big)\,.
	\end{equation*}
	Here, $\widetilde \Delta\doteq \sup_{m>0} \Delta^m$ which can be shown to be finite from the definition of $\{w^m_i\}_{1\leq i\leq n}$ in~\eqref{eq-weight_vector}.
	
	\subsection{Completing the proof of Theorem~\ref{thm-main-1}} From Sections~\ref{sec-proof-JI} and~\ref{sec-proof-GS}, the first part of Theorem~\ref{thm-main-1} follows.  To see how the second part of the theorem follows from the first part, we observe that the first part implies the following: 
	$$ \|h^{k}-h^*\|_{ m} \leq C_0 e^{-C_1 k},$$
	for some constants $C_0=C_0( m),C_1=C_1( m)>0$ with $m$ to be chosen accordingly,  \emph{i.e.}, $m$ is given by~\eqref{def-m} for Algorithm~\ref{alg-rvi-bertsekas-JI} and by~\eqref{def-m-2} for Algorithm~\ref{alg-rvi-bertsekas-GS}. Therefore, in terms of $V^k$ and $V^*$, this gives us 
	\begin{equation*}
		\|\mathscr{V}^k\|_m\leq C_0 e^{-C_1 k} \quad \text{ with }\,\, \mathscr{V}^k(i)\doteq\log \frac{V^k(i)}{V^*(i)}, \quad \text{ for\,\,  $1\leq i \leq n$\,.}
	\end{equation*}
	From the definition of $\|\cdot\|_{ m}$ in~\eqref{def-norm}, supposing that for a large $k$, $V^k(i)\geq V^*(i)$, then $ \frac{V^k(i)}{V^*(i)}\leq e^{w^m_iC_0 e^{-C_1 k}}$ which in turn means that $V^k(i) -V^*(i)\leq V^*(i)(e^{w^m_iC_0 e^{-C_1 k}}-1)\leq C_2 w^m_iC_0 V^*(i)e^{-C_1 k}$, for some $C_2=C_2(m)>0$. Similarly, supposing that $V^k(i)\leq V^*(i)$ gives $V^*(i)-V^k(i)\leq C_3 w^m_iC_0 V^*(i)e^{-C_1 k}$, for some $C_3=C_3(m)>0$. To summarize, we have  argued that $\|V^k-V^*\|_m \leq \max\{C_2,C_3\}C_0e^{-C_1 k}$. From here, the second part of the theorem follows.

\section{Numerical Implementations}\label{sec-num-imp}
In this section, we numerically illustrate the performances of Algorithms~\ref{alg-rvi-bertsekas-JI}--\ref{alg-rvi-bertsekas-GS} in two  examples. 
From Theorem~\ref{thm-main-1}, we know that an approximate choice of $\gamma_k$, for every $k$ (in particular, not arbitrary), is sufficient to ensure the convergence of Algorithms~\ref{alg-rvi-bertsekas-JI} and~\ref{alg-rvi-bertsekas-GS}. Our numerical experiments suggest that a judicious choice of  $\gamma_k$, for every $k$, is also necessary for these algorithms to perform well. For instance, 
if the $\gamma_k$'s are too large, Theorem~\ref{thm-main-1} does not guarantee the convergence of Algorithms~\ref{alg-rvi-bertsekas-JI} and~\ref{alg-rvi-bertsekas-GS}. 
However, if the  $\gamma_k$'s are too small, then the iterates generated by the algorithm will update quite slowly, which may lead to a  slower convergence. 
The contingency  on the size of the step-size parameters is analogous to the average cost algorithms on Pg. 746 of~\cite{Bertsekas_VI_98}, and following the author's approach there, we  make a  choice to use a $\gamma_k$ which starts at a value closer to $1$ and slowly vanishes based on the path of the algorithm. The different forms of $\gamma_k$ as well as the corresponding constants for each example are chosen by experimentation. This ensures that the step-sizes do not decrease too quickly, and only decrease after each algorithm has made enough progress in the sense that the signs of $h^k(n)$ oscillate about $0$. { For both examples, we used either $\gamma_k \propto k^{-1}$ or $\gamma_k~\propto \xi^k$ with $\xi\in (0,1)$, depending on the problem and which step-size decay rate facilitated faster convergence. For the example in Section~\ref{sec-exit}, our tests indicate that $\gamma_k \propto k^{-1}$ and starting with a small value for $\gamma_1$ is more effective. For the example in Section~\ref{sec-queue}, our tests indicate that using $\gamma_k~\propto \xi^k$ with $\xi \approx 0.9$ is more effective.}

\subsection{Example: Service-effort control of a single-server queue }\label{sec-queue}
We consider a service rate control problem for a discrete-time single-server queue with finite capacity. We let $Q_t$ denote the number of customers in the system at time $t$.
Let  $A=\{A_t\}_{t=0}^\infty$ be the arrival process, with $A_t$ being the number of customers who enter the system at time $t$. We assume that $A_t$ is a sequence of i.i.d. random variables taking values on a finite set - we make precise the choice of this finite set in both cases later.  Every customer that enters the system will either wait in a queue or get served by a single server, if available, in the first-come first-served discipline.  At time $t,$ a customer in service is successfully served with a probability $s$, which will be appropriately chosen and hence, will be treated as the control.   Suppose that $s$ lies in a compact subset $\mathbb{U}$ of $ (0,1)$ and denote by $\Uadm$  the set of admissible service-rate control policies that are $\mathbb U$--valued.  The  Bernoulli process associated with the customers serviced is denoted by $\{S_t\}_{t=0}^\infty$.  The objective is to find  a  control policy which achieves the following:
$$ \inf_{ i} \inf_{\{S_t\}\in \Uadm} \limsup_{T\to\infty} \frac{1}{\delta T} \log \EE\big[e^{\delta \sum_{t=0}^{T-1} c(Q_t,S_t)}|Q_0=i\big],$$
where $c(i,s)\doteq  r(i) + g(s)$ with $r(i)$ being the congestion cost when $i$ customers are present in the system and $g(s)$ is the energy cost when service effort is $s$, and $\delta >0$ is the sensitivity parameter. We assume that both $r$ and $g$ are increasing  and  $g$ is convex.

Suppose that the system has a maximum capacity of $n$, so that any new arrivals when there are $n $ customers in the system are rejected. Suppose that $A_t$ is a sequence of i.i.d. Bernoulli random variables taking values $\{0,1\}$, with $\mathbb P(A_t = 1) = \alpha \in (0,1)$. The dynamics of $Q_t$ can be expressed as follows: 
$Q_{t+1}= \big[Q_t+A_{t}\Ind_{Q_t < n} - S_{t}\big]^+ \,.$
For a fixed $s\in \mathbb U$, $Q_t$ is a controlled DTMC with the transition probability $p(i,j,s)$ (depending the choice of $s\in \mathbb U$) given by the following: for $1<i <n$, 
\begin{equation*}
	\begin{aligned} p(i,j,s) &= \begin{cases}
			\alpha (1-s), \, &   j = i+1,\\
			(1-\alpha)(1-s) + \alpha \,s, \, & j = i,\\
			(1-\alpha)s, \, &j = i - 1,
		\end{cases}\\
		p(0,1,s) = \alpha,\quad p(0,0,s) &= 1- \alpha,\quad  p(n,n-1,s) = s\quad \text{and}\quad p(n,n,s) = 1 - s\,.
	\end{aligned}
\end{equation*}

Figure~\ref{fig-plot_single_server_finite} illustrates the convergence of the RVI algorithms. 
The first trend is how the Jacobi iterates seem to frequently oscillate in a damped manner. This trend also occurs for the Gauss-Seidel iterates, but within the first $30$ iterations. We next perform a  sensitivity analysis to understand the effect that the size of the state space $n$ and the sensitivity parameter $\delta$  have on the convergence of the discussed RVI algorithms {(we use $\gamma_k = 0.95^{\hat k}$, as our testing has shown that starting with a larger threshold which stays above $k^{-1}$ for $k<100$ seems to be more effective)}. 
The results are shown in Table~\ref{table:2}. 
We can infer the following for this example: (i)
among our three algorithms, the higher the risk sensitivity, the better the performance of the Gauss-Seidel iteration algorithm,  and (ii) as one would expect, the larger the state space, the slower the  convergence of  all three algorithms; this effect is more pronounced when the risk sensitivity is lower.

\begin{figure}[t]
	\centering
	\includegraphics[width=9cm, height=6cm]{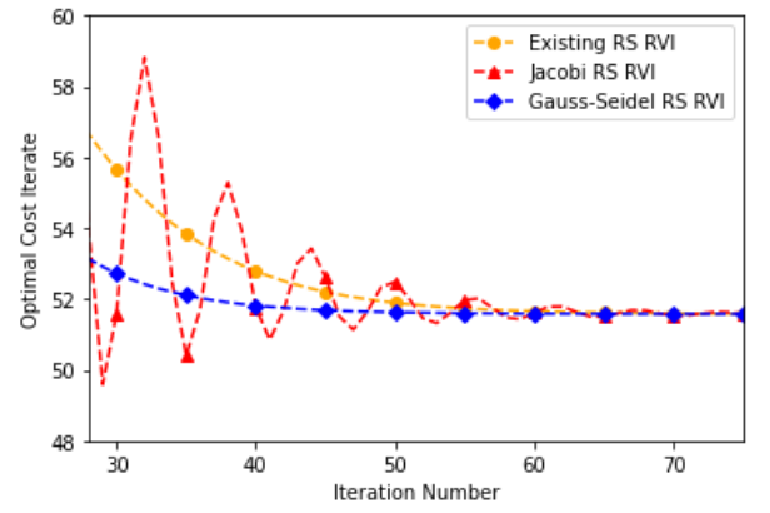}
	\caption{Plot of $\lambda^k$ \emph{vs} the iteration number $k$ for the single server queue with a maximum capacity $n=20$, $\alpha = 0.4,$ $\mathbb U = \{0.1,0.25, 0.4,0.5,0.75,0.9\}$, $c(i,s) = 5(i-1)^+ + 0.25s^2,$ $\gamma_k = 0.95^{\hat k}$ (with updating thresholds $\theta=0.75,0.85$ for Algorithms~\ref{alg-rvi-bertsekas-JI} and~\ref{alg-rvi-bertsekas-GS}, respectively), and $\delta = 1\times 10^{-2}$. The existing RVI algorithm and Algorithms~\ref{alg-rvi-bertsekas-JI} and~\ref{alg-rvi-bertsekas-GS} fall within the tolerance of $\varepsilon = 1\times 10^{-4}$, after $120, 119,$ and $105$ iterations, respectively.  }\label{fig-plot_single_server_finite}
\end{figure}

\begin{table}
	\centering
	\begin{tabular}{||c| c |c c c||} 
		\hline
		$n$ & $\delta$& Existing ERSC RVI  & \textbf{Jacobi} ERSC RVI & \textbf{Gauss-Seidel} ERSC RVI \\ [0.5ex] 
		\hline\hline
		
		$20$ & $5\times 10^{-2}$ & $48$ & $66$ & $45$ \\
		$40$ &$5\times 10^{-2}$ & $68$ & $86$ & $62$ \\
		$60$ & $5\times 10^{-2}$ & $88$ & $104$ & $82$ \\
		\hline 
		$20$ & $1\times 10^{-2}$ & $120$ & $119$ & $105$ \\
		$40$ &$1\times 10^{-2}$ & $144$ & $144$ & $130$ \\
		$60$ & $1\times 10^{-2}$ & $166$ & $164$ & $150$ \\
		\hline
		$20$ & $1\times 10^{-3}$ & $71$ & $83$ & $177$ \\
		$40$ &$1\times 10^{-3}$ & $147$ & $147$ & $405$ \\
		$60$ & $1\times 10^{-3}$ & $276$ & $277$ & $738$ \\
		\hline
	\end{tabular}
	\captionof{table}{Number of iterations required for the difference of successive cost iterates produced by each algorithm to be smaller than a tolerance of $10^{-4}$. }
	\label{table:2}
\end{table} 
\raggedbottom

\subsection{Example: Maximizing the exit-rate from a finite domain}\label{sec-exit}
Consider a controlled DTMC on a finite set that is irreducible for all stationary policies.  Our goal is to find  a control policy that maximizes the rate of exit of the DTMC from a fixed  subset, if the DTMC starts inside that subset. For the sake of  concreteness, we fix the finite set to be a weighted connected graph with $n$ nodes (we denote the set of nodes by $S=\{1,2,\ldots,n\}$) and   the control set  $\bU$ to be a finite set.  The controlled DTMC $X= \{X_t\}_{t=0}^\infty$ is now defined through the transition probability $\{p(i,j,u): i,j\in S, u\in \bU\}$  which is in turn assigned through the weight function $w:S\times S\times \bU\rightarrow [0,\infty)$ (with $w(i,j,\cdot) = 0$ only if  the nodes $i$ and $j$ are not connected)  as follows:
\begin{equation} \label{eq-ex_2_p_probs}
	p(i,j,u)=\frac{w(i,j,u)}{\sum_{k\in S} w(i,k,u)}\,.
\end{equation} 
The graph connectivity implies that the above $p(i,j,u)$  is a valid transition probability and that the DTMC $X$ is irreducible under any stationary Markov control policy.
Let us fix a connected sub-graph $S_0 $ of $S$ and for a control $U=\{U_t\}_{t=0}^\infty$, denote by $ \hat \tau(U)$, the first exit time from the set $S_0$, \emph{i.e.}, $\hat \tau(U) = \inf\{t\geq 0: X_t\notin S_0\}$. As mentioned above, we are interested in the following problem:
\begin{equation}\label{eq-minim-exit} \lambda^*=\max_{i\in S_0} \sup_{U\in \Uadm}  \liminf_{T\to\infty}- \frac{1}{T}\log \PP(\hat \tau(U)>T|X_0=i)\,.\end{equation}
Here, $\Uadm $ is the class of admissible controls that are $\bU$--valued. Note the minus sign in the above display - this is included because the exit-rate, by convention, is non-negative and the quantity $ \frac{1}{T}\log \PP(\hat \tau(U)>T)\leq 0$  as the exit time $\hat \tau(U)$ is almost surely finite.  

We are now in a position to re-cast this problem as an ERSC problem involving the DTMC $X$ and an appropriate running cost. To do this, we first define a controlled DTMC that is restricted to $S_0$. This can be done as follows: define $\overline p_0(i,u)\doteq  \sum_{j \in S_0}p(i,j,u) $ 
and for $i,j\in S_0$, $q(i,j,u)\doteq \frac{p(i,j,u)}{\overline p_0(i,u)}$.
The above expression is well-defined due to the connectedness of $S_0$, \emph{i.e.},  $\overline p_0(i,\cdot)>0$ for all $i\in S$. Hence, $q(i,j,u)$ is also  a transition probability. Let the associated  controlled DTMC be denoted by $Y=\{Y_t\}_{t=0}^\infty$. From the definition of $q(\cdot,\cdot,\cdot)$, it is clear that $Y$  is restricted to $S_0$ and is irreducible.  From here,  for any $U\in \Uadm$, we can verify that  
$$ \PP(\hat \tau(U)>T|X_0=i) = \EE_i^U \Big[e^{\sum_{t=0}^{T-1} \log \overline p_0(Y_t,U_t)}\Big]\,.$$
This, in addition to interchanging the $\limsup$ of a negative quantity with $-\liminf$, reduces~\eqref{eq-minim-exit}  to 
\begin{equation*}
	\lambda^*=-\min_{i\in S_0} \inf_{U\in \Uadm}  \limsup_{T\to\infty} \frac{1}{T}\log \EE_i^U \Big[e^{\sum_{t=0}^{T-1} c(Y_t,U_t)}\Big],
\end{equation*}
which resembles the ERSC problem defined in~\eqref{eq-ERSC-val} with the running cost $c(i,u)= \log \overline p_0(i,u)$.  

For the above problem to be completely defined, it only remains to make a particular choice of the weight function. In our numerical implementation, we consider  two particular choices of weight functions which correspond to the case of a complete graph and the case of a `sparse' graph (that is neither complete nor a tree). In both the cases,  we fix (i) the number of nodes $n=20$ that are randomly chosen inside a disc of radius $R=10$, (ii) the nodes are connected (depending on the case), and (iii) we set $\mathbb U = \{1,1.3,1.6,1.9\}$ and  $S_0 = \{1,\dots,m\}$ with $m=5$. The weight function $w:S\times S\times \bU\rightarrow [0,\infty)$ as $w(i,j,u) = \frac{d(i,j)}{u}$  if nodes  $i,j$ are connected and $w(i,j,u) =0$ otherwise.
	Here, $d(i,j)$ is the Euclidean distance between nodes $i$ and $j$. Regarding the control $u$ as the speed with which the controller travels across the edge connecting $i$ and $j$,  the weight function $w(i,j,u)$ represents 
	the  time taken to travel across the edge connecting nodes $i$ and $j$.

	\begin{figure}[h]
		\centering 
		\begin{subfigure}[t]{0.49\textwidth}
			\includegraphics[width=\textwidth, height=6cm]{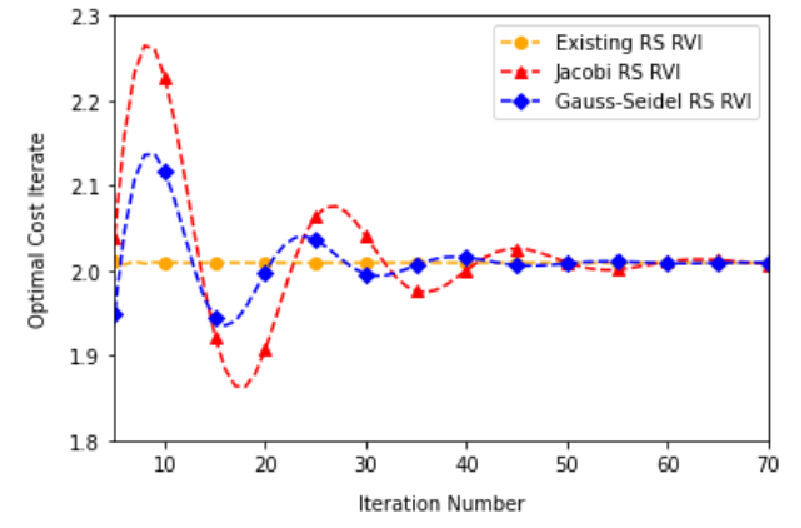}
			
			\caption{}\label{fig-ex-graph_plot_complete}
		\end{subfigure}
		\begin{subfigure}[t]{0.49\textwidth}
			\includegraphics[width=\textwidth, height=6cm]{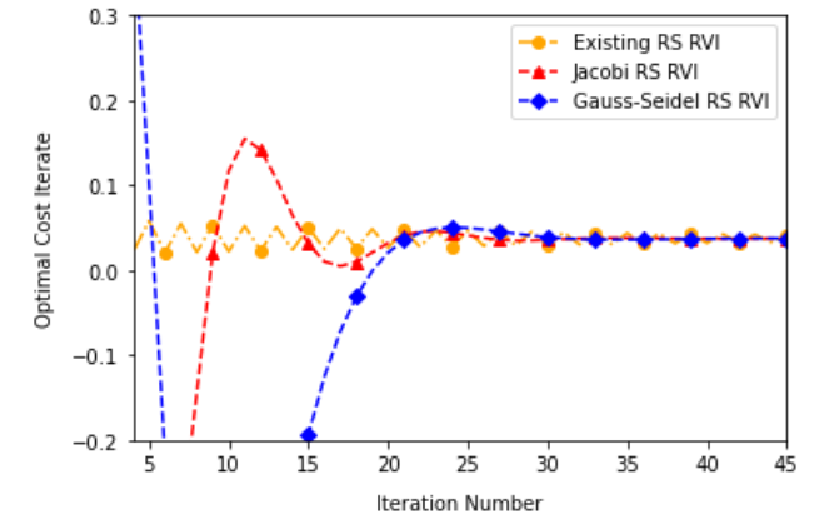}
			
			\caption{}\label{fig-ex-graph_plot_sparse}
		\end{subfigure}
		\caption{Plot of $-\lambda^k$ \emph{vs} the iteration number $k$ for graphs with two different connectivities and parameters $n=20$, $R=10$, $\mathbb U = \{1,1.3,1.6,1.9\}$, and $m=5$. On the left, the graph is complete and the iterates of the existing RVI algorithm and Algorithms~\ref{alg-rvi-bertsekas-JI} and~\ref{alg-rvi-bertsekas-GS} fall within the  tolerance of $\varepsilon = 1\times 10^{-4}$, after $18$, $104$ and  $88$ iterations, respectively. On the right, the graph is neither complete nor a tree and the iterates of the existing algorithm and Algorithms~\ref{alg-rvi-bertsekas-JI} and~\ref{alg-rvi-bertsekas-GS} fall within the  tolerance of $\varepsilon = 1\times 10^{-4}$, after $319$, $66$ and  $68$ iterations, respectively.}
	\end{figure}

	In the implementation of Algorithms~\ref{alg-rvi-bertsekas-JI} and~\ref{alg-rvi-bertsekas-GS}, we set step-sizes $\gamma_k \propto \hat k^{-1}$, where $\hat k$ counts the number of iterations that the iterates $h^k(n)$ sequentially change signs and are larger than a fixed threshold of $\theta = 0.5$ { (which we selected based on a simple grid search procedure)}. 
	Figure~\ref{fig-ex-graph_plot_complete} corresponds to the case where the underlying graph is complete with $\gamma_k= 0.1\hat k^{-1}$ and $\gamma_k= 0.08\hat k^{-1}$ for Algorithms~\ref{alg-rvi-bertsekas-JI} and~\ref{alg-rvi-bertsekas-GS}, respectively. Figure~\ref{fig-ex-graph_plot_sparse} corresponds to the case where the graph is neither complete nor a tree with $\gamma_k=0.25\hat k^{-1}$ and $\gamma_k=0.15\hat k^{-1}$ for Algorithms~\ref{alg-rvi-bertsekas-JI} and~\ref{alg-rvi-bertsekas-GS}, respectively. We observe that for a connected graph, the less the number of edges (in the graph), the better the performance of our Jacobi-like  and Gauss-Seidel-like algorithms, in comparison to the performance of  the existing algorithm.  
	
	{
		We also briefly comment on the algorithmic performance after performing the aperiodicity transformation from~\citep{Cavazos_2003, Murthy_ERSC_MPI_2024} on the cost and transition matrix. This can be done for any ERSC problem with an irreducible transition matrix in order to ensure the controlled process is also aperiodic (without changing the optimal policy), which is usually required for convergence when implementing the existing RVI and modified policy iteration algorithms. We note that for our proposed RVI algorithms, this aperiodicity transformation is not required for convergence, and refer the reader to Appendix~\ref{app-add-num} to see an application of this transformation in the context of this exit rate problem.
	}

	\section{Concluding Remarks}\label{sec-conclude}

	We introduced a Jacobi-like RVI algorithm and a Gauss-Seidel-like implementation  for the ergodic risk-sensitive control problem of a finite-state controlled Markov chain. These algorithms consist of a coupled iteration for the optimal cost and value function, where the  cost is updated through a one-dimensional line-search involving a user-specified step-size and the value function is updated according to a risk-sensitive Bellman-like operator.  Under the assumption that the controlled process is irreducible and recurrent under every stationary policy, we proved that the proposed RVI algorithms converge geometrically. The proof required establishing local contraction properties for both risk-sensitive Bellman-like operators and a local bi-Lipschitz continuity property for the fixed points of these operators. We also evaluated the performance of the proposed algorithms on two examples. 
		
		Several directions remain open for future work. One immediate task is to consider RVI algorithms for ergodic risk-sensitive control problems with countably infinite state spaces, in particular, investigating the convergence (as well as the convergence rate) for RVI algorithms through finite state truncations.  It would also be interesting to explore RVI algorithms for partially observable risk-sensitive control problems, where the original state process is hidden. Another interesting direction is to consider  the learning counterparts for the proposed algorithms for settings where the underlying transition kernel is unknown. The Jacobi-like RVI algorithm and its Gauss-Seidel-like implementation suggest several possible Q-learning and other related reinforcement learning algorithms.

				 \section*{Acknowledgement}
This work is funded by  the NSF Grant DMS 2216765. 

\bibliographystyle{abbrv}
					\bibliography{ERSC_RVI}

\begin{thebibliography}{10}

\bibitem{Arapostathis_2019}
A.~Arapostathis and V.~S. Borkar.
\newblock On the relative value iteration with a risk-sensitive criterion.
\newblock {\em Banach Center Publications}, 122:9--24, 2020.

\bibitem{Arapostathis2013}
A.~Arapostathis, V.~S. Borkar, and K.~S. Kumar.
\newblock Relative value iteration for stochastic differential games.
\newblock In V.~K{\v{r}}ivan and G.~Zaccour, editors, {\em Advances in Dynamic
  Games: Theory, Applications, and Numerical Methods}, Annals of the
  International Society of Dynamic Games 13, pages 3--27. Springer
  International Publishing, 2013.

\bibitem{Barz_RS_revenue_2007}
C.~Barz and K.~Waldmann.
\newblock Risk-sensitive capacity control in revenue management.
\newblock {\em Mathematical Methods of Operations Research}, 65:565--579, 2007.

\bibitem{Basu_RS_Q_learning_2008}
A.~Basu, T.~Bhattacharyya, and V.~S. Borkar.
\newblock A learning algorithm for risk-sensitive cost.
\newblock {\em Mathematics of Operations Research}, 33(4):880--898, 2008.

\bibitem{Bertsekas_VI_98}
D.~P. Bertsekas.
\newblock A new value iteration method for the average cost dynamic programming
  problem.
\newblock {\em SIAM Journal on Control and Optimization}, 36(2):742--759, 1998.

\bibitem{Bertsekas_DP}
D.~P. Bertsekas.
\newblock {\em Dynamic programming and optimal control}.
\newblock Athena Scientific, 2005.

\bibitem{Bielecki_1999}
T.~Bielecki, D.~Hernandez-Hernandez, and S.~Pliska.
\newblock Value iteration for controlled {M}arkov chains with risk sensitive
  cost criterion.
\newblock {\em Proceedings of the 38th IEEE Conference on Decision and
  Control}, 1:126--130, 1999.

\bibitem{Bielecki_RS_portfolio_RVI_1999}
T.~Bielecki, D.~Hernández-Hernández, and S.~Pliska.
\newblock Risk sensitive control of finite state {M}arkov chains in discrete
  time, with applications to portfolio management.
\newblock {\em Mathematical Methods of Operations Research}, 50:167--188, 1999.

\bibitem{Bielecki_RS_Dynamic_manage_1999}
T.~Bielecki and S.~Pliska.
\newblock Risk-sensitive dynamic asset management.
\newblock {\em Applied Mathematics and Optimization}, 39:337--360, 1999.

\bibitem{Bielecki_Cox_2005}
T.~Bielecki, S.~Pliska, and S.-J. Sheu.
\newblock Risk sensitive portfolio management with {C}ox--{I}ngersoll--{R}oss
  interest rates: the {H}{J}{B} equation.
\newblock {\em SIAM Journal on Control and Optimization}, 44(5):1811--1843,
  2005.

\bibitem{BISWAS2023118}
A.~Biswas and V.~S. Borkar.
\newblock Ergodic risk-sensitive control—a survey.
\newblock {\em Annual Reviews in Control}, 55:118--141, 2023.

\bibitem{Biswas2021ERSC}
A.~Biswas and S.~Pradhan.
\newblock Ergodic risk-sensitive control of {M}arkov processes on countable
  state space revisited.
\newblock {\em ESAIM: Control, Optimisation and Calculus of Variations}, 28,
  2021.

\bibitem{Borkar_sensitivity_RS_2001}
V.~Borkar.
\newblock A sensitivity formula for risk-sensitive cost and the actor–critic
  algorithm.
\newblock {\em Systems \& Control Letters}, 44(5):339--346, 2001.

\bibitem{Borkar_2002_Q_learning}
V.~S. Borkar.
\newblock Q-learning for risk-sensitive control.
\newblock {\em Mathematics of Operations Research}, 27(2):294--311, 2002.

\bibitem{Borkar_2002}
V.~S. Borkar and S.~P. Meyn.
\newblock Risk-sensitive optimal control for {M}arkov decision processes with
  monotone cost.
\newblock {\em Mathematics of Operations Research}, 27(1):192--209, 2002.

\bibitem{Bouakiz_inventory_exp_1992}
M.~Bouakiz and M.~J. Sobel.
\newblock Inventory control with an exponential utility criterion.
\newblock {\em Operations Research}, 40(3):603--608, 1992.

\bibitem{Baurle2023_survey}
N.~Bäuerle and A.~Jaśkiewicz.
\newblock Markov decision processes with risk-sensitive criteria: An overview.
\newblock {\em Mathematical Methods of Operations Research}, 99:141--178, 2024.

\bibitem{Cavazos-Cadena2002}
R.~Cavazos-Cadena and E.~Fern{\'a}ndez-Gaucherand.
\newblock Risk-sensitive optimal control in communicating average {M}arkov
  decision chains.
\newblock In M.~Dror, P.~L'Ecuyer, and F.~Szidarovszky, editors, {\em Modeling
  Uncertainty: An Examination of Stochastic Theory, Methods, and Applications},
  International Series in Operations Research \& Management Science 46, pages
  515--553. Springer US, 2002.

\bibitem{Cavazos_2003}
R.~Cavazos-Cadena and R.~Montes-de Oca.
\newblock The value iteration algorithm in risk-sensitive average {M}arkov
  decision chains with finite state space.
\newblock {\em Mathematics of Operations Research}, 28(4):752--776, 2003.

\bibitem{Chen_RS_Discrete_2023}
X.~Chen and Q.~Wei.
\newblock Risk-sensitive average optimality for discrete-time {M}arkov decision
  processes.
\newblock {\em SIAM Journal on Control and Optimization}, 61(1):72--104, 2023.

\bibitem{Chow2015risk}
Y.~Chow, A.~Tamar, S.~Mannor, and M.~Pavone.
\newblock Risk-sensitive and robust decision-making: a cvar optimization
  approach.
\newblock In {\em Advances in Neural Information Processing Systems 28}, pages
  1522--1530, 2015.

\bibitem{Coraluppi_1999}
S.~P. Coraluppi and S.~I. Marcus.
\newblock Risk-sensitive and minimax control of discrete-time, finite-state
  {M}arkov decision processes.
\newblock {\em Automatica}, 35(2):301--309, 1999.

\bibitem{Davis_Math_Finance_2019}
M.~H. Davis.
\newblock {\em Mathematical finance: a very short introduction}.
\newblock Oxford University Press, 2019.

\bibitem{dupuis1997weak}
P.~Dupuis and R.~S. Ellis.
\newblock {\em A weak convergence approach to the theory of large deviations}.
\newblock John Wiley \& Sons, 1997.

\bibitem{Dupuis_robust_RS_2000}
P.~Dupuis, M.~James, and I.~Petersen.
\newblock Robust properties of risk-sensitive control.
\newblock {\em Mathematics of Control, Signals, and Systems}, 13:318--332,
  2000.

\bibitem{Feng_RS_perishable_2008}
Y.~Feng and B.~Xiao.
\newblock A risk-sensitive model for managing perishable products.
\newblock {\em Operations Research}, 56(5):1305--1311, 2008.

\bibitem{Fleming_RS_Finite_State_1997}
W.~H. Fleming and D.~Hern\'{a}ndez-Hern\'{a}ndez.
\newblock Risk-sensitive control of finite state machines on an infinite
  horizon i.
\newblock {\em SIAM Journal on Control and Optimization}, 35(5):1790--1810,
  1997.

\bibitem{Fleming_opt_growth_RS_1999}
W.~H. Fleming and S.~Sheu.
\newblock Optimal long term growth rate of expected utility of wealth.
\newblock {\em The Annals of Applied Probability}, 9(3):871–903, 1999.

\bibitem{Guin_RS_Actor_2026}
S.~Guin, V.~S. Borkar, and S.~Bhatnagar.
\newblock An actor–critic algorithm with function approximation for risk
  sensitive cost {M}arkov decision processes.
\newblock {\em IEEE Transactions on Automatic Control}, 71(1):474–481, 2026.

\bibitem{Jaquette1976utility}
S.~C. Jaquette.
\newblock A utility criterion for markov decision processes.
\newblock {\em Management Science}, 23(1):43--49, 1976.

\bibitem{Levin_mixing_2008}
D.~A. Levin, Y.~Peres, and E.~L. Wilmer.
\newblock {\em {M}arkov Chains and Mixing Times}.
\newblock American Mathematical Society, 2008.

\bibitem{Li2022quantile_mdp}
X.~Li, H.~Zhong, and M.~L. Brandeau.
\newblock Quantile {M}arkov decision processes.
\newblock {\em Operations Research}, 70(3):1428--1447, 2022.

\bibitem{Moharrami_policy_grad_exp_2024}
M.~Moharrami, Y.~Murthy, A.~Roy, and R.~Srikant.
\newblock A policy gradient algorithm for the risk-sensitive exponential cost
  mdp.
\newblock {\em Mathematics of Operations Research}, 50(1):431--458, 2024.

\bibitem{Murthy_ERSC_MPI_2024}
Y.~Murthy, M.~Moharrami, and R.~Srikant.
\newblock On the convergence of modified policy iteration in risk sensitive
  exponential cost {M}arkov decision processes.
\newblock {\em Forthcoming in Operations Research}, 2025.

\bibitem{Nagai_RS_Portfolio_2002}
H.~Nagai and S.~Peng.
\newblock Risk-sensitive dynamic portfolio optimization with partial
  information on infinite time horizon.
\newblock {\em The Annals of Applied Probability}, 12(1):173--195, 2002.

\bibitem{Noorani_RS_RL_2025}
E.~Noorani, C.~Mavridis, and J.~Baras.
\newblock Risk-sensitive reinforcement learning with exponential criteria.
\newblock {\em IEEE Transactions on Cybernetics}, 55(8):3774--3787, 2025.

\bibitem{Peterson_min_max_entropy_2000}
I.~R. Petersen, M.~R. James, and P.~Dupuis.
\newblock Minimax optimal control of stochastic uncertain systems with relative
  entropy constraints.
\newblock {\em IEEE Transactions on Automatic Control}, 45(3):398--412, 2002.

\bibitem{Sobel1994meanvar}
M.~J. Sobel.
\newblock Mean-variance tradeoffs in an undiscounted {MDP}.
\newblock {\em Operations Research}, 42(1):175--183, 1994.

\bibitem{tseng1990solving}
P.~Tseng.
\newblock Solving ${H}$-horizon, stationary {M}arkov decision problems in time
  proportional to $\log ({H})$.
\newblock {\em Operations Research Letters}, 9(5):287--297, 1990.

\bibitem{White1963}
D.~White.
\newblock Dynamic programming, {M}arkov chains, and the method of successive
  approximations.
\newblock {\em Journal of Mathematical Analysis and Applications},
  6(3):373--376, 1963.

\bibitem{WuXuRSMDPUtility2023}
Z.~Wu and R.~Xu.
\newblock Risk-sensitive {M}arkov decision process and learning under general
  utility functions.
\newblock arXiv:2311.13589, 2023.

\end{thebibliography}
					\newpage
\appendix

\section{Summary of key notation and  constants}

\noindent
\setlength{\tabcolsep}{6pt}
\renewcommand{\arraystretch}{1.15}

\begin{tabular}{@{}p{0.22\linewidth}p{0.76\linewidth}@{}}

	$\Lambda$ & Set of parameters $\lambda$ for which the risk-sensitive Bellman-like operator $\widetilde F(\cdot,\lambda)$ has a fixed point. \\[3pt]
	
	$\Lambda_m$ & Subset of $\Lambda$ for which the supremum norm of the fixed point $h_\lambda$ is bounded by $m$. \\[3pt]
	
	$\eta$ & Smallest strictly positive transition probability, after taking the infimum over controls; used in the construction of the weighted supremum norm. \\[3pt]
	
	$\| \cdot \|_m$ & Weighted supremum norm for which the local contraction property of $F(\cdot, \lambda),G(\cdot,\lambda)$ holds and also for which the geometric convergence of $h_k$ in Theorem~\ref{thm-main-1} holds. \\[3pt]
	
	$w_i^m$ & Weights used to define the weighted supremum norm $|\cdot|_m$. \\[3pt]

	$m$ &
	Constant used in the definition of the weighted supremum norm $\|\cdot\|_m$.\\[3pt]
	
	$\overline{\gamma}$ &
	Upper bound on the step sizes $\gamma_k$ ensuring geometric convergence of Algorithms~\ref{alg-rvi-bertsekas-JI} and~\ref{alg-rvi-bertsekas-GS}.\\[3pt]
	
	$c_h(\gamma,m)$ &
	One-step contraction constant for the $h$-iterate; for $\gamma\in(0,\overline{\gamma}]$ one has $0<c_h(\gamma,m)<1$.\\[3pt]
	
	$c_\lambda(\gamma,m)$ &
	One-step contraction constant for the $\lambda$-iterate; for $\gamma\in(0,\overline{\gamma}]$ one has $0<c_\lambda(\gamma,m)<1$.\\[3pt]
	
	$L(m)$ &
	Lipschitz constant used for the fixed point map $\lambda\mapsto h_\lambda$ in the proof of Theorem~\ref{thm-main-1}.\\[3pt]
	
	$\rho(m)$ &
	Geometric convergence parameter ($\rho(m)>1$) appearing in the asymptotic rate statement for the algorithm iterates.\\[3pt]
	
	$\beta_m$ &
	Local contraction constant for the operator $F(h,\lambda)$ with respect to the weighted supremum norm $\|\cdot\|_m$ on the set $\{h:\|h\|_\infty\le m\}$.\\[3pt]
	
	$\widetilde N^*(m',\lambda'),\ \widetilde N_*(m,\lambda)$ &
	Local Lipschitz constants for the map $\lambda\mapsto h_\lambda$ (with $\lambda>\lambda'$, $m=\|h_\lambda\|_\infty$, and $m'=\|h_{\lambda'}\|_\infty$).\\[3pt]
	
	$\underline c,\overline c$ & Minimum and maximum running costs over all states and controls.\\[3pt]
	
	$\tau_n$ & First return time to the reference state $n.$ \\[3pt]
	
	$\underline \tau,\overline \tau$ & Uniform lower and upper bounds on expected return times to state $n$, taken over initial states and stationary Markov policies. \\[3pt]
	
	$\overline L,\underline \alpha$ & Uniform path-length and path-probability constants used in the hitting time estimates for the transformed Markov chains appearing in the proof for local bi-Lipschitz continuity proof of $\lambda \mapsto h_\lambda $ in Proposition~\ref{lem-bounds-h}. \\[3pt]
	
	$\bar\eta,\chi(\cdot)$ & Constants used in the analysis of the Gauss-Seidel-like operator $G$. \\[3pt]
	$\Delta_i^m,\Delta^m$ & Constants used in the contraction for the Gauss-Seidel-like operator $G$. 
\end{tabular}

\section{Entropy variational formula}\label{app-ent-var}
We recall the well-known entropy variational formula as given in Proposition 1.4.2 of \cite{dupuis1997weak}, we have used at several places in our analysis. 
\begin{proposition}\label{prop-ent-var}
	Let $(\mathcal{X},\mathcal{G})$ be a Borel measurable space and $f:\mathcal{X} \rightarrow \RR$ be  a bounded and measurable functional. Then for a probability measure $P$ on  $\mathcal{X}$, 
	\begin{equation}\label{eq-ent-var}
		\log \int_\mathcal{X} e^{f(x)}P(dx) = \sup_{Q\in \mathcal{P}(\mathcal{X})} \bigg[  \int_\mathcal{X}f(x) Q(dx) -R(Q \,||\,P)\bigg],
	\end{equation}
	where $R(Q||P)$ denotes the relative entropy of $Q$ with respect to $P$, \emph{i.e.}, 	
	\begin{equation*}
		R(Q || P) \doteq\begin{cases}
			\int_\mathcal{X} \log \frac{dQ}{dP}(x) Q(dx),\, &\text{ if } Q \ll P, \\
			\infty, \, &\text{ otherwise.}
		\end{cases}
	\end{equation*}
	Furthermore, the supremum in \eqref{eq-ent-var} is uniquely attained at the probability measure $Q^*$ defined by
	\begin{equation}\label{eq-ent-var-max}
		Q^*(dx) \doteq \frac{e^{f(x)}}{\int_\mathcal{X}e^{f(y)}P(dy)}P(dx)\,.
	\end{equation}
\end{proposition}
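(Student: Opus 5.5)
The plan is the standard Gibbs-measure argument: rewrite $\int f\,dQ - R(Q\,\|\,P)$ as $\log\int e^{f}dP$ minus a relative entropy with respect to the tilted measure $Q^*$ in \eqref{eq-ent-var-max}, and then use nonnegativity of relative entropy and its equality case. Set $Z\doteq\int_{\mathcal X}e^{f}\,dP$. Since $f$ is bounded, say $|f|\le K$, we have $e^{-K}\le Z\le e^{K}$. Hence $Q^*$ is a well-defined probability measure with density $dQ^*/dP=e^{f}/Z\in[e^{-2K},e^{2K}]$. In particular $Q^*$ and $P$ are mutually absolutely continuous, so $Q\ll P$ if and only if $Q\ll Q^*$, and then $\frac{dQ}{dQ^*}=\frac{dQ}{dP}\cdot\frac{Z}{e^{f}}$.

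\textbf{Step 1 (trivial cases).} If $Q\not\ll P$ or $R(Q\,\|\,P)=\infty$, the bracket in \eqref{eq-ent-var} equals $-\infty$, because $\int f\,dQ$ is finite ($f$ is bounded). Such $Q$ cannot attain or affect the supremum. So restrict to $Q\ll P$ with $R(Q\,\|\,P)<\infty$.

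\textbf{Step 2 (decomposition).} For such $Q$, take logarithms in the density identity and integrate against $Q$:
\begin{equation*}
R(Q\,\|\,Q^*)=\int\log\frac{dQ}{dP}\,dQ-\int f\,dQ+\log Z = R(Q\,\|\,P)-\int f\,dQ+\log Z .
\end{equation*}
Splitting the integral is legitimate because $\log(Z e^{-f})$ is bounded and $\log\frac{dQ}{dP}$ is $Q$-integrable. The latter holds since $R(Q\,\|\,P)<\infty$ and the negative part of $x\log x$ is bounded by $e^{-1}$, so that $\int(\log\frac{dQ}{dP})^{-}dQ\le e^{-1}$. Rearranging gives
\begin{equation*}
\int f\,dQ-R(Q\,\|\,P)=\log Z-R(Q\,\|\,Q^*).
\end{equation*}

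\textbf{Step 3 (Gibbs inequality and uniqueness).} It remains to show $R(Q\,\|\,Q^*)\ge 0$, with equality iff $Q=Q^*$. Write $g=dQ/dQ^*$, so that $R(Q\,\|\,Q^*)=\int g\log g\,dQ^*$. Since $\varphi(x)=x\log x$ is strictly convex and $\int g\,dQ^*=1$, Jensen's inequality gives $\int\varphi(g)\,dQ^*\ge\varphi(1)=0$. Equality in strict Jensen forces $g$ to be $Q^*$-a.s. constant, hence $g=1$ and $Q=Q^*$. Combined with Step 2, every admissible $Q$ satisfies $\int f\,dQ-R(Q\,\|\,P)\le\log Z$, and equality holds exactly at $Q=Q^*$.

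\textbf{Step 4 (attainment).} To see that the supremum is attained, evaluate at $Q=Q^*$: since $R(Q^*\,\|\,Q^*)=0$, Step 2 gives value $\log Z$. This yields \eqref{eq-ent-var}, and Step 3 gives uniqueness of the maximizer.

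The only delicate point is the integrability bookkeeping in Step 2, namely that $\log\frac{dQ}{dP}$ is $Q$-integrable when $R(Q\,\|\,P)<\infty$. Boundedness of $f$ makes everything else routine.
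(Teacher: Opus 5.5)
Your proof is correct and is essentially the standard argument behind Proposition~1.4.2 of Dupuis and Ellis. The paper cites that result rather than proving it, and the argument is the same: decompose $\int f\,dQ - R(Q\,\|\,P) = \log Z - R(Q\,\|\,Q^*)$, then use $R(Q\,\|\,Q^*)\ge 0$ with equality if and only if $Q=Q^*$. The only point you leave implicit is that $Q^*$ itself satisfies $R(Q^*\,\|\,P)<\infty$, so Step~2 applies to it; this follows immediately from your bound $dQ^*/dP\in[e^{-2K},e^{2K}]$.
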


\section{Additional Numerical Implementations}\label{app-add-num}

We provide a numerical implementation of the exit rate problem from Section~\ref{sec-exit} after performing the previously mentioned aperiodicity transformation of \citep{Cavazos_2003, Murthy_ERSC_MPI_2024}. Upon doing so, we find that the performance of the existing RVI algorithm increases slightly while the performances for Algorithms~\ref{alg-rvi-bertsekas-JI} and~\ref{alg-rvi-bertsekas-GS} decrease marginally (for both the complete and sparse graphs). However, if the aperiodicity coefficient increases, we have observed that the performance of Algorithms~\ref{alg-rvi-bertsekas-JI} and~\ref{alg-rvi-bertsekas-GS} worsens.

\begin{figure}[h]
	\centering 
	\begin{subfigure}[t]{0.49\textwidth}
		\includegraphics[width=\textwidth, height=6cm]{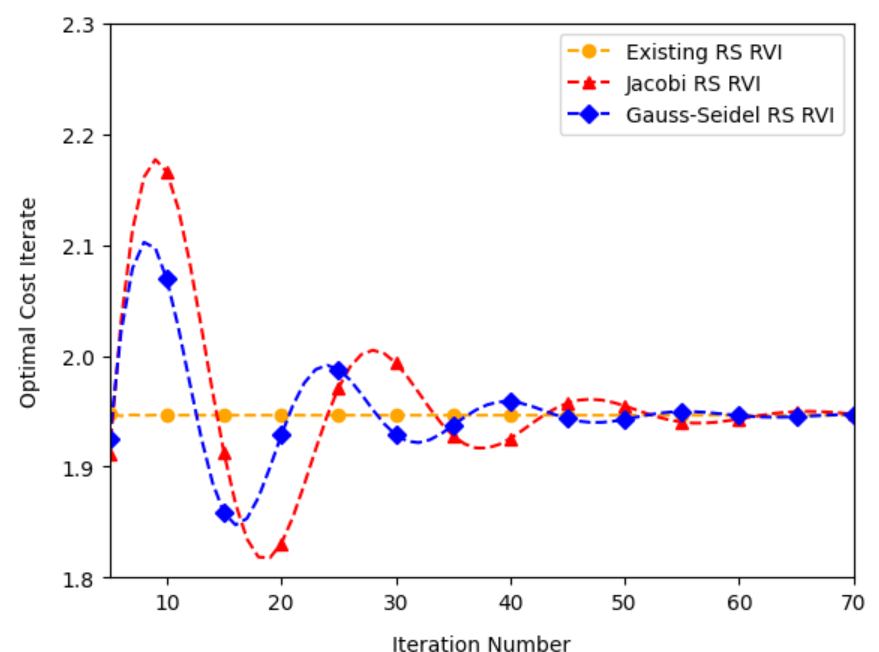}
		
		\caption{}\label{fig-ex-graph_plot_complete-2}
	\end{subfigure}
	\begin{subfigure}[t]{0.49\textwidth}
		\includegraphics[width=\textwidth, height=6cm]{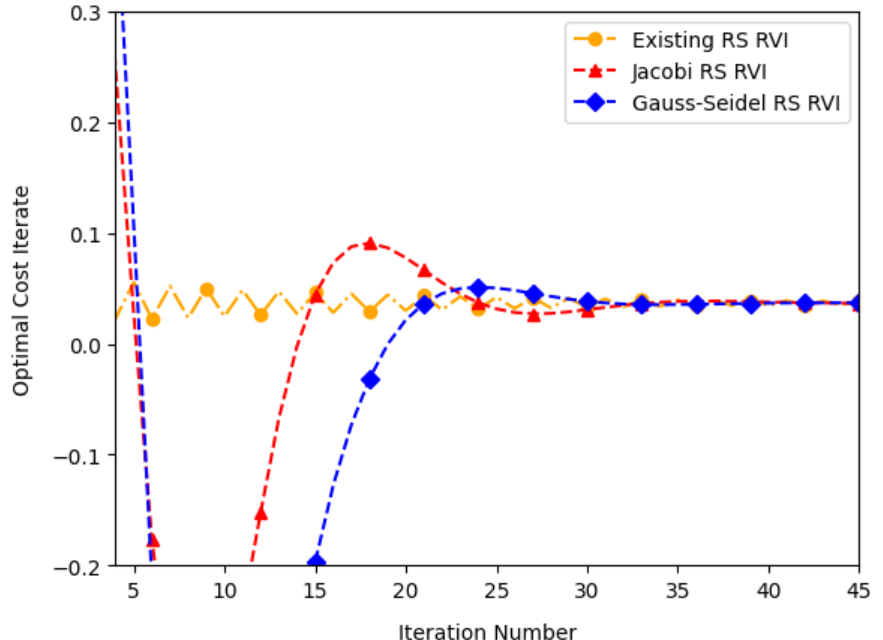}
		
		\caption{}\label{fig-ex-graph_plot_sparse-2}
	\end{subfigure}
	\caption{Plot of $-\lambda^k$ \emph{vs} the iteration number $k$ for graphs with two different connectivities and parameters $n=20$, $R=10$, $\mathbb U = \{1,1.3,1.6,1.9\}$, and $m=5$, where both problems have undergone the aperiodicity transformation with aperiodicity coefficient $\kappa = 0.05$. On the left, the graph is complete and the iterates of the existing RVI algorithm and Algorithms~\ref{alg-rvi-bertsekas-JI} and~\ref{alg-rvi-bertsekas-GS} fall within the  tolerance of $\varepsilon = 1\times 10^{-4}$, after $12$, $71$ and  $99$ iterations, respectively. On the right, the graph is neither complete nor a tree and the iterates of the existing algorithm and Algorithms~\ref{alg-rvi-bertsekas-JI} and~\ref{alg-rvi-bertsekas-GS} fall within the  tolerance of $\varepsilon = 1\times 10^{-4}$, after $198$, $78$ and  $68$ iterations, respectively.}
\end{figure}

\section{Proof of Proposition~\ref{prop-c-alpha}}\label{app-c-alpha2}
Recall that for  $k\geq 1$, the iterates $(h^{k},\lambda^k)$ are given recursively by~\eqref{eq-F-itr}. The proof in this case is divided into several lemmas whose proofs hinge heavily on the use of Proposition~\ref{thm-contraction} and Proposition~\ref{lem-bounds-h}. 
We begin by proving that $|\lambda^{k+1}-\lambda^*|\leq \widetilde c_\lambda N_*^{-1}(m) \alpha$. The proof of this estimate is split into two cases depending on whether $\lambda^k\leq \lambda^*$ or $\lambda^k>\lambda^*$. We only provide  the proof of the estimate of $|\lambda^{k+1}-\lambda^*|$  in detail  in the case where  $\lambda^k\leq \lambda^*$. For the case where $\lambda^k>\lambda^*$, we highlight the key differences in the arguments involved at the end of the proof of Lemma~\ref{lemma-geom_conv_lambda-1}.

{ \noindent\bf Case: $\lambda^k\leq \lambda^*$.} The idea is to use Proposition~\ref{coro-connectivity_Lambda} (i)-(ii) to choose intermediate $\lambda$ values such that $|\lambda-\lambda^*| \propto \alpha$ which is similar to~\eqref{eq-c-alpha2} and then, on a case-by-case basis, show that we can choose a step-size $\gamma_k$ so that $\lambda^{k+1}$ is close enough to the intermediate $\lambda$ values. The approach is summarized as follows:
\begin{enumerate}
	\item[(i)] Using the explicit bounds for the difference between any two scalars in $\Lambda$  from Proposition~\ref{lem-bounds-h} as well as the bijectivity of the function $\lambda \mapsto h_\lambda(n)$ from Proposition~\ref{coro-connectivity_Lambda} (i)-(ii), we  choose $\overline{\lambda}, \underline \lambda\in \Lambda_m$ so that $h_{\overline \lambda}(n) = \alpha\beta_m,\, h_{\underline \lambda}(n) = \alpha.$ We then define $\widehat \lambda$ as the midpoint of $\overline \lambda$ and $\underline \lambda$.
	\item[(ii)] We next analyze $|\lambda^* - \lambda^{k+1}|$ on a case-by-case basis depending on whether our newly defined $\widehat \lambda$ is larger or smaller than $\lambda^k$, where both cases are handled through comparisons with the intermediate values $\widehat \lambda, \underline \lambda, $ and $\overline \lambda.$ In each case, we show there exists a function $c(\gamma, m)$ that is strictly less than 1 whenever $\gamma$ is less than  a certain threshold value. For the step size $\gamma_k$ less than this threshold, we then have  $|\lambda^* - \lambda^{k+1}| \leq c(\gamma_k, m)N_*^{-1}(m) {\alpha}{}$. 	\end{enumerate}

We now give a lemma which provides the existence of the aforementioned $\bar \lambda$ and $\underline \lambda$. 
\begin{lemma}\label{lem-lambda-intermediate} For any $\alpha>0$, there exist $m>0$ large enough  and unique  $\overline \lambda,\underline \lambda \in \Lambda_m$ such that $\lambda^*>\overline \lambda>\underline \lambda$ and 
	\begin{equation*}
		h_{\overline \lambda}(n) = \alpha\beta_m \quad \text {and} \quad h_{\underline \lambda}(n) = \alpha\,.\end{equation*}	
\end{lemma}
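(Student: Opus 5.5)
The argument is an intermediate value argument for the scalar map $\phi(\lambda)\doteq h_\lambda(n)$ on $\Lambda=(\lambda_c,\infty)$. By Proposition~\ref{coro-connectivity_Lambda}(i), $\phi$ is strictly decreasing. Proposition~\ref{lem-bounds-h} gives local Lipschitz bounds on compact subintervals of $\Lambda$, since the constants $\widetilde N^*$ depend continuously on $m',\lambda'$, which stay bounded there. Hence $\phi$ is continuous on $\Lambda$. By Proposition~\ref{coro-connectivity_Lambda}(ii), $\phi(\lambda)\to\infty$ as $\lambda\downarrow\lambda_c$ and $\phi(\lambda)\to-\infty$ as $\lambda\uparrow\infty$. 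So $\phi$ is a strictly decreasing continuous bijection $\Lambda\to\RR$. Finally, $\phi(\lambda^*)=h^*(n)=0$ by Proposition~\ref{prop-ESSP}(i).

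Fix $\alpha>0$. Since $\beta_m\in(0,1)$ for every $m$, both targets $\alpha\beta_m$ and $\alpha$ are strictly positive, and $\alpha\beta_m<\alpha$. The bijectivity of $\phi$ gives unique $\overline\lambda,\underline\lambda\in\Lambda$ with $\phi(\overline\lambda)=\alpha\beta_m$ and $\phi(\underline\lambda)=\alpha$. Strict monotonicity together with $0<\alpha\beta_m<\alpha$ and $\phi(\lambda^*)=0$ yields $\lambda^*>\overline\lambda>\underline\lambda$.

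The main subtlety is the dependence on $m$: $\overline\lambda$ depends on $m$ through $\beta_m$, and we also need membership in $\Lambda_m$, i.e.\ $\max_i h_{\overline\lambda}(i)\le m$ and $\max_i h_{\underline\lambda}(i)\le m$. I would handle this as follows. The point $\underline\lambda$ does not depend on $m$, so $M_0\doteq\max_i h_{\underline\lambda}(i)<\infty$ is a fixed number. For any $\lambda\ge\underline\lambda$, Proposition~\ref{coro-connectivity_Lambda}(i) gives $h_\lambda(i)\le h_{\underline\lambda}(i)\le M_0$ componentwise. Since $\overline\lambda>\underline\lambda$ for every choice of $m$, both points lie in $\Lambda_m$ as soon as $m\ge M_0$. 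This circumvents the circularity between $m$ and $\beta_m$. Uniqueness is inherited from the injectivity of $\phi$ and does not depend on the chosen $m$.

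If one also wants compatibility with the $m$ fixed in~\eqref{def-m} (where $\alpha\le\alpha_0$), I would instead use Proposition~\ref{coro-connectivity_Lambda}(iii). It suffices that $\lambda^*-\underline\lambda<(m-m^*)/\widetilde N_*(m,\lambda^*)$. Since $\phi(\underline\lambda)=\alpha$, the lower bound in Proposition~\ref{lem-bounds-h} controls $\lambda^*-\underline\lambda$ by $\alpha$ divided by a positive constant, and this is small relative to $m-m^*$ once $m$ is large. The lemma only asserts existence of a large enough $m$, so the first argument already suffices.
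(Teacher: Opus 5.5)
Your proof is correct and follows the route the paper indicates. The paper omits the proof, calling the lemma a straightforward consequence of Proposition~\ref{coro-connectivity_Lambda}(ii), i.e., that $\lambda\mapsto h_\lambda(n)$ is a strictly decreasing bijection of $\Lambda$ onto $\RR$ vanishing at $\lambda^*$. You also make explicit two points the paper leaves implicit: continuity of $\lambda\mapsto h_\lambda(n)$, from the local Lipschitz bounds of Proposition~\ref{lem-bounds-h}, and membership of both points in $\Lambda_m$, by monotonicity once $m\ge\max_i h_{\underline\lambda}(i)$, which cleanly avoids the dependence of $\overline\lambda$ on $m$ through $\beta_m$.
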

This is straightforward consequence of Proposition~\ref{coro-connectivity_Lambda}(ii) and hence, we omit the proof.
In what follows, we refer to  $\overline \lambda$ and $\underline \lambda$ as those defined  in the above lemma. Also, let	$\widehat \lambda \doteq \frac{1}{2}({\overline \lambda + \underline \lambda}).$ We immediately have $\lambda^*>\overline \lambda>\widehat \lambda>\underline \lambda$, from Proposition~\ref{coro-connectivity_Lambda}(i). For a more quantitative comparison between the intermediate $\lambda$ values, we provide the following lemma. 	{}
\begin{lemma}\label{lemma-geom_conv_lambda_identities} The following inequalities hold among $\lambda^*$, $\overline \lambda$, $\underline \lambda$ and $\widehat \lambda$. 
	\begin{align}\nonumber
		\frac{(1 - \beta_m)\alpha}{N^*(m)} &\leq \overline \lambda - \underline \lambda \leq \frac{(1-\beta_m)\alpha}{N_*(m)},\\\nonumber
		\frac{\alpha \beta_m }{N^*(m)} &\leq \lambda^* - \overline \lambda \leq \frac{ \alpha \beta_m}{N_*(m)},\\ 
		\label{eq-tilde_lam-overline_lam}
		\frac{\alpha}{N^*(m)}& \leq \lambda^* - \underline \lambda \leq \frac{\alpha}{N_*(m)},\\\nonumber
		\frac{(1+\beta_m)\alpha}{2N^*(m)} &\leq \lambda^* - \widehat \lambda \leq \frac{(1+\beta_m)\alpha}{2N_*(m)},\\\nonumber
		\frac{(1-\beta_m)\alpha}{2N^*(m)} &\leq \overline \lambda - \widehat \lambda \leq \frac{(1-\beta_m)\alpha}{2N_*(m)}\,.
	\end{align}
	\end{lemma}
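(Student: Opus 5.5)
The plan is to derive all five pairs of inequalities from Proposition~\ref{lem-bounds-h}, applied to suitable pairs among $\lambda^*,\overline\lambda,\underline\lambda$ and evaluated only at the reference state $i=n$. The values of $h$ there are known exactly: $h^*(n)=h_{\lambda^*}(n)=0$ by Proposition~\ref{prop-ESSP}(i), $h_{\overline\lambda}(n)=\alpha\beta_m$ and $h_{\underline\lambda}(n)=\alpha$ by Lemma~\ref{lem-lambda-intermediate}. Proposition~\ref{coro-connectivity_Lambda}(i) gives the ordering $\lambda^*>\overline\lambda>\underline\lambda$, so each pair meets the hypothesis $\lambda>\lambda'$ of Proposition~\ref{lem-bounds-h}.

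First I will replace the pair-dependent constants $\widetilde N_*(\cdot,\cdot)$ and $\widetilde N^*(\cdot,\cdot)$ in \eqref{eq-bounds-h} by the uniform constants $N_*(m)=\widetilde N_*(m,\overline c)$ and $N^*(m)=\widetilde N^*(m,\overline c)$ from \eqref{def-m}.
\begin{itemize}
\item All three of $\lambda^*,\overline\lambda,\underline\lambda$ lie in $\Lambda_m$, so the relevant maxima of $h$ are at most $m$.
\item By \eqref{eq-N}, $\widetilde N^*(m',\lambda')$ is increasing in $m'$ and in $\lambda'$.
\item $\widetilde N_*(m,\lambda)$ is decreasing in $m$ and in $\lambda$, since the exponent $((\delta\underline c-\lambda)\underline\tau-m)\tau_n$ decreases in both.
\item Every parameter involved satisfies $\lambda\le\lambda^*\le\overline c$.
\end{itemize}
Together these should yield, for any two of the three points $\lambda>\lambda'$ in $\Lambda_m$ with $\lambda,\lambda'\le\overline c$,
\[
N_*(m)(\lambda-\lambda')\le h_{\lambda'}(n)-h_\lambda(n)\le N^*(m)(\lambda-\lambda').
\]

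The main obstacle is this monotonicity bookkeeping. The first slot of $\widetilde N^*$ is $\max_i h_{\lambda'}(i)$, which may be smaller than $m$, so I must check the direction in which each substitution moves the bound. The exponent also carries a term $-\lambda'$, so replacing $\lambda'$ by $\overline c$ only enlarges $\widetilde N^*$ and only shrinks $\widetilde N_*$, which is the direction needed. A further subtlety is that the paper's $N^*$ uses $m$ in the first slot while the proposition uses $m'$; since $m'\le m$ here, monotonicity covers this.

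Next comes the routine step of plugging in the three pairs:
\begin{itemize}
\item $(\lambda^*,\underline\lambda)$: $h_{\underline\lambda}(n)-h^*(n)=\alpha$ gives $\alpha/N^*\le\lambda^*-\underline\lambda\le\alpha/N_*$.
\item $(\lambda^*,\overline\lambda)$: the difference $\alpha\beta_m$ gives the second line.
\item $(\overline\lambda,\underline\lambda)$: the difference $\alpha-\alpha\beta_m=(1-\beta_m)\alpha$ gives the first line.
\end{itemize}

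Finally, the statements for $\widehat\lambda=\tfrac12(\overline\lambda+\underline\lambda)$ follow by linear combination, with no further appeal to $h_{\widehat\lambda}$. For the fourth line, $\lambda^*-\widehat\lambda=\tfrac12[(\lambda^*-\overline\lambda)+(\lambda^*-\underline\lambda)]$, and adding the second and third lines gives bounds $(1+\beta_m)\alpha/(2N^*)$ and $(1+\beta_m)\alpha/(2N_*)$. For the last line, $\overline\lambda-\widehat\lambda=\tfrac12(\overline\lambda-\underline\lambda)$, so halving the first line gives it.
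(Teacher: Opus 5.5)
Your proposal is correct and follows the paper's proof. Like the paper, you apply Proposition~\ref{lem-bounds-h} at state $n$ to the pairs $(\lambda^*,\underline\lambda)$, $(\lambda^*,\overline\lambda)$, $(\overline\lambda,\underline\lambda)$ using $h_{\lambda^*}(n)=0$, $h_{\overline\lambda}(n)=\alpha\beta_m$, $h_{\underline\lambda}(n)=\alpha$, and then average to handle $\widehat\lambda$.

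Two points improve on the paper's write-up. Your monotonicity check, which replaces $\widetilde N_*,\widetilde N^*$ by $N_*(m),N^*(m)$, makes explicit a step the paper leaves implicit. You also obtain the last line by halving the first line; the paper instead speaks of ``subtracting the second line from the fourth,'' which does not literally give the stated bounds unless $N_*(m)=N^*(m)$.
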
	
\begin{proof}
	Using Proposition~\ref{lem-bounds-h}, we have 
\begin{equation*}
	\begin{aligned}
		&0 < N_*(m) (\overline \lambda - \underline \lambda) \leq h_{\underline \lambda}(n) - h_{\overline \lambda}(n) \leq N^*(m)(\overline \lambda - \underline \lambda) \\
		&\implies  N_*(m) (\overline \lambda - \underline \lambda) \leq (1-\beta_m)\alpha \leq N^*(m)(\overline \lambda - \underline \lambda)\,.
	\end{aligned}
\end{equation*}
by our choice of $\underline \lambda$ and $\overline \lambda. $ This gives us the first inequality in~\eqref{eq-tilde_lam-overline_lam}.

Similarly, using the fact that $h_{\lambda^*}(n) = 0$, we obtain the second and third inequalities in~\eqref{eq-tilde_lam-overline_lam}.  Lastly, the fourth inequality in~\eqref{eq-tilde_lam-overline_lam} is obtained from the definition of $\widehat \lambda$, and the fifth inequality is a result of  subtraction of the second inequality in~\eqref{eq-tilde_lam-overline_lam} from the fourth.
\end{proof}	

\begin{lemma}\label{lemma-geom_conv_lambda}
	Suppose~\eqref{eq-alpha2} holds for some $\alpha>0$, $\lambda^k\leq \widehat \lambda$ and $\gamma_k\leq \frac{1}{N^*(m)}$. Then,
\begin{equation*}
	|\lambda^{k+1} - \lambda^*| \leq c_1(\gamma_k, m) \frac{\alpha}{N_*(m)}, 
\end{equation*}
where 
$$c_1(\gamma, m) \doteq  \max \left\{ 1 - \gamma \frac{(1-\beta_m)N_*(m)^2}{2N^*(m)},\, \beta_m \gamma N_*(m) \right\}\,.$$
\end{lemma}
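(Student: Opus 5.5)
We work in the case $\lambda^k\le\widehat\lambda<\lambda^*$ and use $\lambda^{k+1}-\lambda^*=(\lambda^k-\lambda^*)+\gamma_k h^{k+1}(n)$. Up to the factor $\delta^{-1}$, which we treat as absorbed into the normalisation as in \eqref{eq-F-itr}, this splits into the two sides $\lambda^{k+1}\le\lambda^*$ and $\lambda^{k+1}>\lambda^*$, matching the two terms in $c_1$. The plan is to sandwich $h^{k+1}(n)$ and then translate each side into a distance bound.

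\textbf{Bounds on $h^{k+1}(n)$.} Since $h_{\lambda^k}=F(h_{\lambda^k},\lambda^k)$ and both $h^k$ and $h_{\lambda^k}$ lie in the $m$-ball (by \eqref{eq-alpha2} and $\lambda^k\in\Lambda_m$), Proposition~\ref{thm-contraction} gives
\[
|h^{k+1}(n)-h_{\lambda^k}(n)|\le w^m_n\beta_m\|h^k-h_{\lambda^k}\|_m\le\beta_m\alpha,
\]
using $w^m_n=1$. Hence
\[
h_{\lambda^k}(n)-\beta_m\alpha\le h^{k+1}(n)\le h_{\lambda^k}(n)+\beta_m\alpha .
\]

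\textbf{Lower bound (progress).} Apply Proposition~\ref{lem-bounds-h} to the pair $\widehat\lambda>\lambda^k$, together with the monotonicity in Proposition~\ref{coro-connectivity_Lambda}(i). This gives $h_{\lambda^k}(n)\ge h_{\widehat\lambda}(n)$. Applying the same proposition to $\overline\lambda>\widehat\lambda$ gives
\[
h_{\widehat\lambda}(n)\ge h_{\overline\lambda}(n)+N_*(m)(\overline\lambda-\widehat\lambda)=\alpha\beta_m+N_*(m)(\overline\lambda-\widehat\lambda).
\]
Using Lemma~\ref{lemma-geom_conv_lambda_identities}, $\overline\lambda-\widehat\lambda\ge(1-\beta_m)\alpha/(2N^*(m))$. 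Therefore
\[
h^{k+1}(n)\ge N_*(m)(1-\beta_m)\alpha/(2N^*(m))>0,
\]
so $\lambda^{k+1}>\lambda^k$.

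\textbf{Case $\lambda^{k+1}\le\lambda^*$.} Here $|\lambda^{k+1}-\lambda^*|=(\lambda^*-\lambda^k)-\gamma_k h^{k+1}(n)$. Bound $\lambda^*-\lambda^k\le\alpha/N_*(m)$ by \eqref{eq-alpha2}, and subtract $\gamma_k$ times the lower bound just obtained. The result is
\[
|\lambda^{k+1}-\lambda^*|\le\frac{\alpha}{N_*(m)}\Big(1-\gamma_k\frac{(1-\beta_m)N_*(m)^2}{2N^*(m)}\Big),
\]
which is the first term of $c_1$.

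\textbf{Case $\lambda^{k+1}>\lambda^*$ (overshoot).} We need an upper bound on $h^{k+1}(n)$, and this is where I expect the main difficulty.

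1. Upper bound $h_{\lambda^k}(n)$ by Proposition~\ref{lem-bounds-h} with the pair $\lambda^*>\lambda^k$, using $h_{\lambda^*}(n)=0$: this gives $h_{\lambda^k}(n)\le N^*(m)(\lambda^*-\lambda^k)$. The constant $\widetilde N^*$ is evaluated at $\lambda^k$ rather than $\overline c$, so a monotonicity-in-$\lambda$ remark is needed to replace it by $N^*(m)$.

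2. Combine with the sandwich and the hypothesis $\gamma_k\le 1/N^*(m)$:
\[
\lambda^{k+1}-\lambda^*\le(\lambda^k-\lambda^*)+\gamma_kN^*(m)(\lambda^*-\lambda^k)+\gamma_k\beta_m\alpha\le\gamma_k\beta_m\alpha .
\]

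3. Since $\gamma_k\beta_m\alpha=\beta_m\gamma_kN_*(m)\cdot\alpha/N_*(m)$, this is exactly the second term of $c_1$.

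The delicate point is that the factor $1-\gamma_kN^*(m)$ must be nonnegative so that the $\lambda^k-\lambda^*$ term can be dropped. That is precisely why the hypothesis $\gamma_k\le 1/N^*(m)$ is imposed, and why the constants must be taken uniformly at level $m$.

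Taking the maximum of the two cases gives the claimed bound with $c_1(\gamma_k,m)$.
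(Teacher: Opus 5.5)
Your proposal is correct and follows essentially the same route as the paper: you sandwich $h^{k+1}(n)$ within $\beta_m\alpha$ of $h_{\lambda^k}(n)$ via the local contraction, lower-bound $h_{\lambda^k}(n)$ through $\widehat\lambda,\overline\lambda$ and Lemma~\ref{lemma-geom_conv_lambda_identities}, upper-bound it through $\lambda^*$ using Proposition~\ref{lem-bounds-h}, and use $\gamma_k\le 1/N^*(m)$ to discard the $(1-\gamma_kN^*(m))(\lambda^*-\lambda^k)$ term. Your two cases $\lambda^{k+1}\le\lambda^*$ and $\lambda^{k+1}>\lambda^*$ are just the paper's upper and lower bounds on $\lambda^*-\lambda^{k+1}$, and your monotonicity remark for replacing $\widetilde N^*(m',\lambda^k)$ by $N^*(m)$ makes explicit a step the paper leaves implicit.
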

\begin{proof}
	Since the map  $\lambda\mapsto h_\lambda(n)$ is monotonically decreasing, Proposition~\ref{lem-bounds-h} and our choice of $\overline \lambda$ give us 
\begin{equation}\label{eq-h_lam_lower_bound}
	h_{\lambda^k}(n) \geq h_{\widehat \lambda}(n) \geq h_{\over{\lambda}}(n) + N_*(m)(\overline \lambda - \widehat \lambda) \geq \alpha \beta_m + \frac{\alpha(1-\beta_m) N_*(m)}{2N^*(m)}\,.
\end{equation}
From the definition of $h^{k+1}$, the norm $\|\cdot\|_m$ defined in~\eqref{def-norm}, Proposition~\ref{thm-contraction} and the first inequality in~\eqref{eq-alpha2}, we obtain 
\begin{equation}\label{eq-h_k+1_h_lambda_k_contract_prop}
	|h^{k+1}(n) - h_{\lambda^k}(n)| \leq \norm{F(h^k,\lambda^k) - F(h_{\lambda^k},\lambda^k)}_m \leq \beta_m \norm{h^k - h_{\lambda^k}}_m \leq \alpha \beta_m\,. 
\end{equation}
Hence, $h^{k+1}(n) \geq h_{\lambda^k}(n) - \alpha \beta_m.$ Combining this with \eqref{eq-h_lam_lower_bound} shows that $h^{k+1}(n) \geq  \frac{\alpha(1-\beta_m) N_*(m)}{2N^*(m)},$	which enables us to bound $\lambda^* - \lambda^{k+1}$ from above as follows:
\begin{equation}\label{eq-iteration_case_i_upper_bound}
	\begin{aligned}
		\lambda^* - \lambda^{k+1} &= \lambda^* - \lambda^k - \gamma_k h^{k+1}(n) \\
		&\leq \lambda^* - \lambda^k - \gamma_k\frac{\alpha(1-\beta_m) N_*(m)}{2N^*(m)} \\
		&\leq  \frac{\alpha}{N_*(m)} - \gamma_k \frac{\alpha (1-\beta_m)N_*(m)}{2N^*(m)}\\ 
		&= \frac{\alpha}{N_*(m)}\bigg(1 - \gamma_k \frac{(1-\beta_m)N_*(m)^2}{2N^*(m)}\bigg)\,.
	\end{aligned}
\end{equation}
Next, we turn to bound $\lambda^* - \lambda^{k+1}$ from below. To begin with, from~\eqref{eq-h_k+1_h_lambda_k_contract_prop}, we obtain $$h^{k+1}(n) \leq h_{\lambda^k}(n) + \alpha \beta_m\,.$$ From the above display, Proposition~\ref{lem-bounds-h} and the fact that $\lambda^k\leq \lambda^*$, we immediately get 
\begin{equation}\label{eq-hkn-ub}
	h^{k+1}(n) \leq N^*(m)(\lambda^* - \lambda^k) + \alpha \beta_m\ .
\end{equation}
Now  observe that
\begin{equation}\label{eq-iteration_case_i_lower_bound}
	\begin{aligned}
		\lambda^* - \lambda^{k+1} &= \lambda^* - \lambda^k - \gamma_k h^{k+1}(n) \\
		&\geq \lambda^* - \lambda^k - \gamma_k \left(N^*(m)(\lambda^* - \lambda^k) + \alpha \beta_m \right)\\
		&= (1-\gamma_k N^*(m))(\lambda^* - \lambda^k) - \alpha \beta_m\gamma_k\\ 
		&\geq -\alpha \beta_m \gamma_k\,.
	\end{aligned}
\end{equation}
In the above, to get the first line we use~\eqref{eq-F-itr}; to get the second line, we use~\eqref{eq-hkn-ub}; to get the fourth  line, we use the fact that $\gamma_k \leq \frac{1}{N^*(m)}$. Therefore, the fourth lines in~\eqref{eq-iteration_case_i_upper_bound} and~\eqref{eq-iteration_case_i_lower_bound} imply that 
$$ |\lambda^* - \lambda^{k+1}|\leq c_1(\gamma_k, m)\frac{\alpha}{N_*(m)}.$$
This completes the proof. \end{proof}
	\begin{lemma}\label{lemma-geom_conv_lambda-1}
	Suppose~\eqref{eq-alpha2} holds for some $\alpha>0$, $ \widehat \lambda<\lambda^k\leq \lambda^*$  and 
\begin{equation}\label{eq-gam-bound} \gamma_k\leq \min\Big\{ \frac{1+\beta_m}{\beta_mN_*(m)+N^*(m)},\,\,  \frac{1-\beta_m}{2\beta_mN^*(m)}\Big\}\,. \end{equation}
Then 
\begin{equation}\label{eq-l-bound-1}
	|\lambda^{k+1} - \lambda^*| \leq c_2( m) \frac{\alpha}{N_*(m)},\quad\text{where}\quad c_2(m) \doteq  \frac{(1+\beta_m)}{2}\,.
\end{equation}

\end{lemma}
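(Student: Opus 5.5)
We mirror the proof of Lemma~\ref{lemma-geom_conv_lambda} and bound $\lambda^*-\lambda^{k+1}$ from above and from below separately. Throughout we use the update $\lambda^{k+1}=\lambda^k+\gamma_k h^{k+1}(n)$ from \eqref{eq-F-itr}. We also use the one-step estimate \eqref{eq-h_k+1_h_lambda_k_contract_prop},
\[
|h^{k+1}(n)-h_{\lambda^k}(n)|\le \alpha\beta_m,
\]
which holds under \eqref{eq-alpha2} by Proposition~\ref{thm-contraction}.

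\emph{Upper bound.} Since $\widehat\lambda<\lambda^k\le\lambda^*$, the fourth inequality of Lemma~\ref{lemma-geom_conv_lambda_identities} gives
\[
\lambda^*-\lambda^k<\lambda^*-\widehat\lambda\le \frac{(1+\beta_m)\alpha}{2N_*(m)}.
\]
By Proposition~\ref{lem-bounds-h} and $h_{\lambda^*}(n)=0$, we have $h_{\lambda^k}(n)\ge N_*(m)(\lambda^*-\lambda^k)\ge 0$. Hence $h^{k+1}(n)\ge N_*(m)(\lambda^*-\lambda^k)-\alpha\beta_m$, and therefore
\[
\lambda^*-\lambda^{k+1}\le (1-\gamma_kN_*(m))(\lambda^*-\lambda^k)+\gamma_k\alpha\beta_m .
\]
The right side is affine in $x:=\lambda^*-\lambda^k\in[0,(1+\beta_m)\alpha/(2N_*(m))]$, so it suffices to check the two endpoints. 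At $x=(1+\beta_m)\alpha/(2N_*(m))$ it equals
\[
\frac{(1+\beta_m)\alpha}{2N_*(m)}-\gamma_k\alpha\,\frac{1-\beta_m}{2}\le \frac{(1+\beta_m)\alpha}{2N_*(m)}.
\]
At $x=0$ it equals $\gamma_k\alpha\beta_m$. Since $N_*(m)\le N^*(m)$, the second constraint in \eqref{eq-gam-bound} gives
\[
\gamma_k\alpha\beta_m\le \frac{(1-\beta_m)\alpha}{2N^*(m)}\le \frac{(1+\beta_m)\alpha}{2N_*(m)}.
\]

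\emph{Lower bound.} Using $h^{k+1}(n)\le h_{\lambda^k}(n)+\alpha\beta_m\le N^*(m)(\lambda^*-\lambda^k)+\alpha\beta_m$, we get
\[
\lambda^*-\lambda^{k+1}\ge (1-\gamma_kN^*(m))(\lambda^*-\lambda^k)-\gamma_k\alpha\beta_m .
\]
Under \eqref{eq-gam-bound} we have $\gamma_kN^*(m)<1$, since the first constraint gives $\gamma_k\le (1+\beta_m)/(\beta_mN_*(m)+N^*(m))$, and the second term dominates at small $x$. So the right side is at least $-\gamma_k\alpha\beta_m$. By the second constraint in \eqref{eq-gam-bound},
\[
\gamma_k\alpha\beta_m\le \frac{(1-\beta_m)\alpha}{2N^*(m)}\le \frac{(1+\beta_m)\alpha}{2N_*(m)}.
\]
This yields $\lambda^*-\lambda^{k+1}\ge -c_2(m)\alpha/N_*(m)$.

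\emph{Role of the first constraint, and the main obstacle.} If only the crude bound $\gamma_kN^*(m)\le 1$ were needed, the first constraint in \eqref{eq-gam-bound} would be superfluous. The main difficulty is that $\gamma_kN^*(m)<1$ does not obviously follow from the second constraint alone. If it fails, one instead bounds the lower side at the endpoint $x=(1+\beta_m)\alpha/(2N_*(m))$:
\[
(1-\gamma_kN^*(m))\frac{(1+\beta_m)\alpha}{2N_*(m)}-\gamma_k\alpha\beta_m\ \ge\ -\frac{(1+\beta_m)\alpha}{2N_*(m)} .
\]
Rearranged, this is exactly $\gamma_k(\beta_mN_*(m)+N^*(m))\le 1+\beta_m$, which is the first constraint. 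Checking that affine endpoint bookkeeping is the only delicate step. The case $\lambda^k>\lambda^*$ then follows by the symmetric argument with the roles of $N_*$ and $N^*$ interchanged.
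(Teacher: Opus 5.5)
Your proof is correct, but it is organized differently from the paper's. The paper splits on the sign of $h^{k+1}(n)$:
\begin{itemize}
\item If $h^{k+1}(n)<0$, it deduces $h_{\lambda^k}(n)\le\alpha\beta_m$, hence $\lambda^k\ge\overline\lambda$. It then uses the second constraint in \eqref{eq-gam-bound} together with $\overline\lambda-\widehat\lambda\ge(1-\beta_m)\alpha/(2N^*(m))$ to show that the downward step cannot carry $\lambda^{k+1}$ below $\widehat\lambda$.
\item If $h^{k+1}(n)\ge0$, it gets $\lambda^*-\lambda^{k+1}\le\lambda^*-\widehat\lambda$ from monotonicity, and bounds the overshoot by discarding $\lambda^k-\lambda^*\le0$.
\end{itemize}
You instead keep $x=\lambda^*-\lambda^k$ in both directions. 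You get affine bounds in $x$ from Proposition~\ref{lem-bounds-h} and the one-step contraction estimate, and check the endpoints of $[0,(1+\beta_m)\alpha/(2N_*(m))]$. You need neither $\overline\lambda$ nor the sign split, only the fourth inequality of Lemma~\ref{lemma-geom_conv_lambda_identities}.

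This is more than cosmetic. In the paper's case $h^{k+1}(n)\ge0$, dropping $\lambda^k-\lambda^*$ only yields $\lambda^{k+1}-\lambda^*\le\gamma_k\alpha\big(\beta_m+N^*(m)/N_*(m)\big)\le(1+\beta_m)\alpha/N_*(m)=2c_2(m)\alpha/N_*(m)$. That is a factor $2$ short of the stated bound. Your retained term $(\gamma_kN^*(m)-1)x$ delivers $c_2(m)$ exactly as claimed.

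Two of your side remarks are wrong, though neither is load-bearing.
\begin{itemize}
\item The first constraint in \eqref{eq-gam-bound} does not imply $\gamma_kN^*(m)<1$, because $(1+\beta_m)N^*(m)/(\beta_mN_*(m)+N^*(m))\ge1$ whenever $N_*(m)\le N^*(m)$. It is the second constraint that gives it. With $y=\eta e^{-2m}$ and $l\ge2$, one has $\beta_m=\sum_{j=0}^{2l-2}y^j\big/\sum_{j=0}^{2l-1}y^j\ge(2l-1)/(2l)\ge3/4$, whence $\gamma_kN^*(m)\le(1-\beta_m)/(2\beta_m)\le1/6$.
\item The right-endpoint condition rearranges to $\gamma_k\big(\tfrac{1+\beta_m}{2}N^*(m)+\beta_mN_*(m)\big)\le1+\beta_m$. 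The first constraint implies this, but it is not equivalent to it.
\end{itemize}
An affine function on an interval is minimized at an endpoint, and you verify both endpoints. So your lower bound holds regardless of the sign of $1-\gamma_kN^*(m)$; state it that way and drop the incorrect justification.
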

	\begin{proof} The proof of this lemma is divided into two cases depending on the sign of $h^{k+1}(n)$, \emph{i.e.},  (a) $h^{k+1}(n) \geq 0$, or (b) $h^{k+1}(n) < 0.$
		
		{\bf \noindent Case (a):} From the definition of $\lambda^{k+1}$, we have $\lambda^{k+1} \geq \lambda^k$. Using the bounds on the difference between $\widehat \lambda $ and $\lambda^*$ in the fourth inequality of~\eqref{eq-tilde_lam-overline_lam} as well as the hypothesis that $ \lambda^k >\widehat \lambda $, we get 
		\begin{equation}\label{eq-lambda_opt_leq_lambda_iter}
			\lambda^* \leq \widehat \lambda + \frac{(1+\beta_m)\alpha}{2N_*(m)} \leq \lambda^k+ \frac{(1+\beta_m)\alpha}{2N_*(m)} \leq \lambda^{k+1} + \frac{(1+\beta_m)\alpha}{2N_*(m)}\,.
		\end{equation}
		We next bound $h^{k+1}(n)$ from above:
		\begin{equation}\label{eq-bounds_lam_iterates}
			\begin{aligned}
				0\leq h^{k+1}(n)& \leq |h^{k+1}(n) - h_{\lambda^k}(n)| + |h_{\lambda^k}(n)|\\
				&\leq\norm{h^{k+1} - h_{\lambda^k}}_m + N^*(m)|\lambda^k - \lambda^*| \\
				&\leq \norm{F(h^k,\lambda^k) - F(h_{\lambda^k},\lambda^k)}_m + N^*(m)|\lambda^k - \lambda^*|\\
				&\leq \beta_m \norm{h^k - h_{\lambda^k}}_m + N^*(m)|\lambda^k - \lambda^*|\\
				&\leq \beta_m\alpha+ \alpha \frac{N^*(m)}{N_*(m)}\,.
			\end{aligned}
		\end{equation}
			This, in conjunction with $\lambda^k \leq \lambda^*,$  implies that
			\begin{equation}\label{eq-lam_k_leq_lam_*}
				\lambda^{k+1} = \lambda^k + \gamma_k h^{k+1}(n) \leq \lambda^* + \gamma_k \bigg(\alpha \beta_m + \alpha \frac{N^*(m)}{N_*(m)}\bigg)\,.
			\end{equation}
			Hence, 
			\begin{equation*}
				\lambda^{k+1} - \lambda^* \leq \gamma_k \alpha\bigg( \beta_m + \frac{N^*(m)}{N_*(m)}\bigg) \leq \frac{(1+\beta_m)\alpha}{N_*(m)}\,.
			\end{equation*}
			To get the second inequality above, we use the fact that $\gamma_k$ satisfies~\eqref{eq-gam-bound}. Additionally, using \eqref{eq-lambda_opt_leq_lambda_iter}, we see that
			\begin{equation*}
				\lambda^* - \lambda^{k+1} \leq \frac{(1+\beta_m)\alpha}{2N_*(m)}\,.
			\end{equation*}
			As a result, we obtain~\eqref{eq-l-bound-1}.	
			This completes the proof for {Case (a)}.
			
			{\noindent \bf Case (b):}	The assumption that $\|h^k - h_{\lambda^k}\|_m \leq \alpha$  and the bound on $|h^{k+1}(n)-h_{\lambda^k}(n)|$ from~\eqref{eq-h_k+1_h_lambda_k_contract_prop} give us 
			\begin{equation}\label{eq-h_lam_k_leq_h_k+1}
				h_{\lambda^k}(n) \leq h^{k+1}(n) + \alpha \beta_m \leq \alpha\beta_m,
			\end{equation}
			where the last inequality holds because $h^{k+1}(n) < 0$ by assumption. However, recall that $\overline \lambda$ was chosen so that $h_{\overline \lambda}(n) = \alpha \beta_m.$ Since $\lambda\mapsto h_\lambda(n)$ is monotonically decreasing, it follows that $\lambda^k \geq \overline \lambda.$ Since $\lambda^k \leq \lambda^*$ and $\lambda\mapsto h_\lambda(n)$ is monotonically decreasing,  we get $0=h_{\lambda^*}(n) \leq h_{\lambda^k}(n)$.  Hence, we get $0 \leq h_{\lambda^k}(n) \leq \alpha \beta_m$. Combining this with  \eqref{eq-h_lam_k_leq_h_k+1}  and re-arranging the resulting inequality, we get 
			\begin{equation}\label{eq-h_k+1_bound}
				-\alpha \beta_m \leq h^{k+1}(n) \leq 0\,.
			\end{equation}
			We use this to construct the desired upper bound for $\lambda^* - \lambda^{k+1}.$ This is sufficient because $\lambda^{k+1} = \lambda^k + \gamma_k h^{k+1}(n) < \lambda^k$ as $h^{k+1}(n) < 0$, and $\lambda^k \leq \lambda^*$ by assumption. From \eqref{eq-h_k+1_bound} and~\eqref{eq-gam-bound}, it follows that 
			$$|\gamma_kh^{k+1}(n)| \leq \frac{(1-\beta_m)\alpha}{2N^*(m)}\,. $$
			Combining this with our bound on the difference of $\overline \lambda$ and $\widehat \lambda$ in the fifth inequality of~\eqref{eq-tilde_lam-overline_lam}, we have
			$$ |\gamma_k h^{k+1}(n) | \leq \overline \lambda -\widehat \lambda \leq \lambda^k - \widehat \lambda\, ,$$ 
			where $\lambda^k \geq \overline \lambda$ was shown in the paragraph following \eqref{eq-h_lam_k_leq_h_k+1}. Following the definition of $\lambda^{k+1}$, this leads to $\lambda^{k+1} = \lambda^k + \gamma_k h^{k+1}(n) \geq \widehat \lambda$. As a result, we can apply the bound on $\lambda^* - \widehat \lambda$ from the fourth inequality in~\eqref{eq-tilde_lam-overline_lam} to obtain	
			\begin{equation*}
				\lambda^* - \lambda^{k+1} \leq \lambda^* - \widehat \lambda \leq \frac{(1+\beta_m)\alpha}{2N^*(m)} \leq c_2( m)\frac{\alpha}{N_*(m)}\,.
			\end{equation*}
			This completes the proof for Case (b) as well as the proof for the lemma. 
\end{proof}

{\noindent \bf Case: $\lambda^k>\lambda^*$.} 
The proof in this case involves similar arguments as those used in the case where $\lambda^k\leq \lambda^*$. However, we only discuss one of the main differences. We restrict ourselves to using the same notation as in the previous case.  In this case, we define our intermediate values $\overline \lambda$ and  $\underline \lambda$ in a similar manner to Lemmas~\ref{lem-lambda-intermediate} and~\ref{lemma-geom_conv_lambda_identities} but with the following modification: $\overline \lambda$ and  $\underline \lambda$ are the unique real numbers such that $\underline \lambda$, $\overline \lambda\in \Lambda_m$ satisfying $h_{\underline \lambda}(n) = - \alpha \beta_m$ and $h_{\overline \lambda}(n) = -\alpha$. In this case, we again define $\widehat \lambda \doteq \frac{1}{2}(\overline \lambda + \underline \lambda).$ From these definitions, it follows that $\lambda^* < \underline \lambda < \widehat \lambda < \overline \lambda$, and these values satisfy the same  inequalities as those given in Lemma~\ref{lemma-geom_conv_lambda_identities} (but with $\lambda^*-\overline \lambda$, $\lambda^*-\underline\lambda$ and $\lambda^*-\widehat \lambda$ replaced by $\overline \lambda- \lambda^*$, $\underline \lambda-\lambda^*$ and $\widehat \lambda-\lambda^*$, respectively).  

From here, the rest of the proof is split into two cases depending on whether $\lambda^k\in (\lambda^*,\widehat \lambda)$ or not. In the case where $\lambda^k\notin (\lambda^*,\widehat \lambda)$, we obtain a result analogous to Lemma~\ref{lemma-geom_conv_lambda} and in the case where  $\lambda^k\in (\lambda^*,\widehat \lambda)$, we obtain a result analogous to Lemma~\ref{lemma-geom_conv_lambda-1}. The  proofs of these results follow closely the main arguments of the   proofs of Lemmas~\ref{lemma-geom_conv_lambda} and~\ref{lemma-geom_conv_lambda-1}, with minor changes. Hence, we omit them.

From Lemmas~\ref{lemma-geom_conv_lambda} and~\ref{lemma-geom_conv_lambda-1}, and their analogs for the case where $\lambda^k>\lambda^*$, whenever 
$$\gamma_k\leq  \min\Big\{\frac{1}{N^*(m)},\,\,  \frac{1+\beta_m}{\beta_mN_*(m)+N^*(m)},\,\,  \frac{1-\beta_m}{2\beta_mN^*(m)}\Big\},$$
we have  $|\lambda^{k+1} - \lambda^*| \leq \widetilde c_\lambda(\gamma_k, m) \frac{\alpha}{N_*(m)},$ where $\widetilde c_\lambda(\gamma,m)\doteq \max\{ c_1(\gamma,m),c_2(m)\}$. Moreover, if 
$$ \gamma\leq  \widehat \gamma\doteq  \min\Big\{\frac{1}{\beta_mN_*(m)}, \,\,\frac{2N^*(m)}{(1-\beta_m) N_*(m)^2}, \,\, \Big(\frac{m-m^*}{ N_*(m)}\Big)\Big( \frac{1}{\frac{\alpha_0}{N_*(m)}+\overline c-\underline c+\varkappa(m)}\Big)\Big\},$$
then $\widetilde c_\lambda(\gamma,m)<1$ and from  Proposition~\ref{prop-c-alpha0}, we know that $\lambda^{k+1}\in \Lambda$ and in fact $\lambda^{k+1}\in \Lambda_m$. In particular, $h_{\lambda^{k+1}}$ exists. In the following lemma, we estimate $\|h^{k+1}-h_{\lambda^k}\|_m$ in terms of $\alpha$.
\begin{lemma}\label{lemma-geom_conv_h_k}
Suppose~\eqref{eq-alpha2} holds for some $\alpha>0$. Then, 
\begin{equation}\label{eq-h-comp}
	\norm{h^{k+1} - h_{\lambda^{k+1}}}_m \leq \widetilde c_h(\gamma_k, m) \alpha,
\end{equation}
where 
$$ \widetilde c_h(\gamma,m)\doteq \beta_m+ \gamma \big(\|e\|_m+\frac{\beta_m N^*(m)}{w^0_1}\big) \Big( \beta_m+ \frac{N^*(m)}{N_*(m)}\Big)\,.$$
Recall $w^0_1$ from~\eqref{eq-weight_vector}, $\beta_m$ from~\eqref{eq-contraction_const} and $N_*(m)$ is as defined in the second line of~\eqref{def-m}.
\end{lemma}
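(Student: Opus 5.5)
We will split the error by the triangle inequality,
\[
\|h^{k+1}-h_{\lambda^{k+1}}\|_m\le \|h^{k+1}-h_{\lambda^k}\|_m+\|h_{\lambda^k}-h_{\lambda^{k+1}}\|_m ,
\]
and bound the two terms separately.

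\textbf{First term.} Since $h^{k+1}=F(h^k,\lambda^k)$ and $h_{\lambda^k}=F(h_{\lambda^k},\lambda^k)$, we can use Proposition~\ref{thm-contraction}. Its hypotheses hold because $\|h^k\|_\infty\le m$ and $\lambda^k\in\Lambda_m$ by~\eqref{eq-alpha2}. Together with $\|h^k-h_{\lambda^k}\|_m\le\alpha$ this gives
\[
\|h^{k+1}-h_{\lambda^k}\|_m\le\beta_m\alpha ,
\]
which supplies the leading $\beta_m$ in $\widetilde c_h$. One caveat: the contraction needs $\|h_{\lambda^k}\|_\infty\le m$, while $\Lambda_m$ only controls $\max_i h_{\lambda^k}(i)$. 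The lower bound on $h_{\lambda^k}$ comes from Lemma~\ref{lem-hl}, and I would absorb it into the choice of $m$ in~\eqref{def-m}, as the paper implicitly does.

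\textbf{Second term.} By Proposition~\ref{prop-c-alpha0} and the discussion before this lemma, $\lambda^{k+1}\in\Lambda_m$. Both $\lambda^k$ and $\lambda^{k+1}$ lie in $\Lambda_m$, so Proposition~\ref{lem-bounds-h} gives a componentwise bound. Since $h_\lambda$ is monotone in $\lambda$, the upper Lipschitz constant can be taken uniformly as $N^*(m)$, following the convention the paper uses with $\lambda\le\overline c$. Thus
\[
|h_{\lambda^k}(i)-h_{\lambda^{k+1}}(i)|\le N^*(m)\,|\lambda^{k+1}-\lambda^k|=N^*(m)\,\gamma_k\,|h^{k+1}(n)|/\delta .
\]
Passing to $\|\cdot\|_m$ costs a factor $1/w^0_1$ by~\eqref{norm-rel}. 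I would keep the $\delta$ normalization consistent with how the appendix drops it, i.e.\ treat the $\lambda$-update as $\lambda^k+\gamma_k h^{k+1}(n)$.

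\textbf{Bounding $|h^{k+1}(n)|$.} This is exactly~\eqref{eq-bounds_lam_iterates}: write
\[
|h^{k+1}(n)|\le|h^{k+1}(n)-h_{\lambda^k}(n)|+|h_{\lambda^k}(n)-h_{\lambda^*}(n)| ,
\]
using $h_{\lambda^*}(n)=0$. The first piece is at most $\beta_m\alpha$ by the contraction step. The second is at most $N^*(m)|\lambda^k-\lambda^*|\le N^*(m)\alpha/N_*(m)$ by Proposition~\ref{lem-bounds-h} and~\eqref{eq-alpha2}. Hence
\[
|h^{k+1}(n)|\le\alpha\Bigl(\beta_m+\frac{N^*(m)}{N_*(m)}\Bigr).
\]
Combining,
\[
\|h^{k+1}-h_{\lambda^{k+1}}\|_m\le \beta_m\alpha+\gamma_k\frac{N^*(m)}{w^0_1}\Bigl(\beta_m+\frac{N^*(m)}{N_*(m)}\Bigr)\alpha .
\]

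\textbf{Matching the stated constant.} The stated $\widetilde c_h$ has the prefactor $\|e\|_m+\beta_mN^*(m)/w^0_1$, which suggests the authors split slightly differently. They write
\[
h^{k+1}-h_{\lambda^{k+1}}=\bigl(F(h^k,\lambda^k)-F(h_{\lambda^k},\lambda^{k+1})\bigr)+\bigl(F(h_{\lambda^k},\lambda^{k+1})-F(h_{\lambda^{k+1}},\lambda^{k+1})\bigr).
\]
Since $F(h,\lambda)$ depends on $\lambda$ only through the shift $-\lambda e$, the first bracket is at most $\beta_m\alpha+\|e\|_m|\lambda^{k+1}-\lambda^k|$. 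The second is at most $\beta_m\|h_{\lambda^k}-h_{\lambda^{k+1}}\|_m\le\beta_m N^*(m)|\lambda^{k+1}-\lambda^k|/w^0_1$. Substituting $|\lambda^{k+1}-\lambda^k|=\gamma_k|h^{k+1}(n)|$ and the bound on $|h^{k+1}(n)|$ gives exactly the stated $\widetilde c_h$, so I would follow this split.

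\textbf{Main obstacle.} The hard step is justifying that every invocation of Proposition~\ref{thm-contraction} and Proposition~\ref{lem-bounds-h} is inside the region where they apply: all points have sup-norm at most $m$, and $\lambda^{k+1}\in\Lambda_m$ with a uniform Lipschitz constant $N^*(m)$. This is where Proposition~\ref{prop-c-alpha0} and the step-size restriction $\gamma_k\le\widehat\gamma$ enter.
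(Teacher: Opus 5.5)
Your proposal is correct and is essentially the paper's proof. The paper writes $h^{k+1}=F(h^k,\lambda^{k+1})+(\lambda^{k+1}-\lambda^k)e$, contracts $F(\cdot,\lambda^{k+1})$ between $h^k$ and $h_{\lambda^{k+1}}$, and passes through $h_{\lambda^k}$ by the triangle inequality; this produces exactly your three terms $\beta_m\alpha$, $\|e\|_m|\lambda^{k+1}-\lambda^k|$ and $\beta_m\|h_{\lambda^k}-h_{\lambda^{k+1}}\|_m$, and it then bounds $|\lambda^{k+1}-\lambda^k|=\gamma_k|h^{k+1}(n)|$ as in~\eqref{eq-bounds_lam_iterates}, just as you do. The caveats you flag (the lower bound on $h_{\lambda^k},h_{\lambda^{k+1}}$ needed for Proposition~\ref{thm-contraction}, the dropped $1/\delta$, and $\lambda^{k+1}\in\Lambda_m$ with the uniform constant $N^*(m)$) are left equally implicit in the paper; the first can be closed with Lemma~\ref{lem-hl}, since the $\lambda^k$ stay in a bounded interval around $\lambda^*$ and so $m$ can be enlarged once to cover it.
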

\begin{proof}
From~\eqref{eq-F-itr}, we have 
\begin{equation}\label{eq-bounds_h_value_func}
	\begin{aligned}
		\| h^{k+1} - h_{\lambda^{k+1}}\|_m &\leq \|(\lambda^{k+1} - \lambda^k)e\|_m + \|F(h^k,\lambda^{k+1}) - F(h_{\lambda^{k+1}},\lambda^{k+1}) \|_m\\
		&\leq |\lambda^{k+1} - \lambda^k| \|e\|_m  + \beta_m \|h_{\lambda^k} - h_{\lambda^{k+1}} \|_m+\beta_m \|h^k - h_{\lambda^k}\|_m\,.
	\end{aligned}
\end{equation} 
In the above, we apply Proposition~\ref{thm-contraction} and the triangle inequality. We now estimate the first two  terms. 
Consider $|\lambda^{k+1} - \lambda^k|$. Using the definition of $\lambda^{k+1}$ from~\eqref{eq-F-itr}, 
we see that 
\begin{equation*}
	\begin{aligned}
		|\lambda^{k+1} - \lambda^k| = \gamma_k | h^{k+1}(n)|& \leq \gamma_k\bigg(|h^{k+1}(n) - h_{\lambda^k}(n)| + |h_{\lambda^k}(n)|\bigg) \\
		&\leq \gamma_k\bigg(\norm{F(h^k,\lambda^k) - F(h_{\lambda^k},\lambda^k)}_m + N^*(m)|\lambda^k - \lambda^*|\bigg) \\
		&\leq \gamma_k\bigg(\beta_m \norm{h^k - h_{\lambda^k}}_m + N^*(m)|\lambda^k - \lambda^*|\bigg). 
	\end{aligned}
\end{equation*}
To arrive at the second line, we use the fact that $\|x_n\|\leq\|x\|_m$, for $x\in \RR^n$, Proposition~\ref{lem-bounds-h} for $h_{\lambda^k}(n)$, and $h^*(n)$, and the definition of $h^{k+1}$ and $h_{\lambda^k}$. To get the last line, we apply Proposition~\ref{thm-contraction}.

From~\eqref{norm-rel},  Proposition~\ref{lem-bounds-h} and the fourth inequality in~\eqref{eq-bounds_lam_iterates}, we have
\begin{equation*}
	\begin{aligned}
		\|h_{\lambda^k} - h_{\lambda^{k+1}} \|_m&\leq \frac{N^*(m)}{w^0_1}|\lambda^{k+1}-\lambda^{k}|\leq  \frac{\gamma_kN^*(m)}{w^0_1}\bigg(\beta_m \norm{h^k - h_{\lambda^k}}_m + N^*(m)|\lambda^k - \lambda^*|\bigg)\,.
	\end{aligned}
\end{equation*} 
Combining the inequalities in~\eqref{eq-bounds_h_value_func} and~\eqref{eq-bounds_lam_iterates} with the above inequality, we obtain 
\begin{equation*}
	\begin{aligned}
		\| h^{k+1} - h_{\lambda^{k+1}}\|_m & \leq \beta_m\norm{h^k - h_{\lambda^k}}_m +  \gamma_k \|e\|_m\bigg(\beta_m \norm{h^k - h_{\lambda^k}}_m + N^*(m)|\lambda^k - \lambda^*|\bigg)\\
		&\qquad + \gamma_k\beta_mN^*(m)\bigg(\beta_m \norm{h^k - h_{\lambda^k}}_m + N^*(m)|\lambda^k - \lambda^*|\bigg)\\
		&\leq \beta_m \alpha + \gamma_k \|e\|_m \bigg(\beta_m \alpha  + N^*(m)\frac{\alpha}{N_*(m)}\bigg)+   \frac{\gamma_kN^*(m)}{w^0_1}\bigg(\beta_m \alpha  + N^*(m)\frac{\alpha}{N_*(m)}\bigg)\\
		&= \widetilde c_h(\gamma_k,m) \alpha\,.
	\end{aligned}
\end{equation*}
In the second inequality, we use the first two inequalities in~\eqref{eq-alpha2}. This proves the first part of~\eqref{eq-h-comp}. This completes the proof of the lemma. 
\end{proof}

From  the above analysis, and the  definitions of $\widetilde c_h$ and $\widetilde c_\lambda$, it is easy to see  that whenever
$$ \gamma_k\leq \widetilde \gamma\doteq  \min\Bigg\{\frac{1}{\beta_mN_*(m)}, \frac{2N^*(m)}{(1-\beta_m) N_*(m)^2}, \frac{(1-\beta_m)N_*(m)}{\big(\|e\|_m+\beta_m N^*(m) \big) \big(\beta_mN_*(m)+N^*(m)\big)},\widehat  \gamma\Bigg\},$$
we have $\widetilde c_h(\gamma_k,m)<1$ and $\widetilde c_\lambda(\gamma_k,m)<1$.  This completes the proof of Proposition~\ref{prop-c-alpha}. \hfill $\Box$

\medskip

\section{Proof of Proposition~\ref{prop-contraction-GS}}\label{app-proof-contraction-GS}
\begin{proof}
	We proceed by using the contraction property of $F(\cdot,\lambda)$ (obtained in Proposition~\ref{thm-contraction}) and inductively showing that 
\begin{equation}\label{eq-G_contract}
	\frac{|G_i(h_1,\lambda_1) - G_i(h_2,\lambda_2)|}{w_i^m} \leq \beta_m \norm{h_1 - h_2}_m + \Delta_i^m|\lambda_1 - \lambda_2|
\end{equation}
for $1\leq i\leq n$. From here, we use the definition of $\|\cdot\|_m$ in~\eqref{def-norm}  to obtain \eqref{eq-contraction-GS}. 

\noindent \textbf{Initialization step:} We prove~\eqref{eq-G_contract} for $i=1$.
Observe that $G_1(h_1,\lambda_1) = F_1(h_1,\lambda_1).$ Furthermore, the local contraction property of $F$ (Proposition~\ref{thm-contraction}) implies that 
\begin{equation}\label{eq-int-step-1}\frac{|F_1(h_1,\lambda_1) - F_1(h_2,\lambda_1)|}{w_1^m} \leq \beta_m \norm{h_1 - h_2}_m\,.\end{equation} From the definition of $G_1(\cdot,\cdot)$, we have 
$$ G_1(h_1, \lambda_1) - G_1(h_2, \lambda_1) = F_1(h_1,\lambda_1) - F_1(h_2,\lambda_1),$$
and using~\eqref{eq-int-step-1}, we get
$$\frac{|G_1(h_1,\lambda_1) - G_1(h_2,\lambda_1)|}{w_1^m} \leq \beta_m \|h_1 - h_2\|_m\,. $$	
Using the definitions of $G_1(\cdot,\cdot)$ and $\Delta_1^m$, and the above display, it is now clear that
\begin{equation*}
	\begin{aligned}
		\frac{G_1(h_1,\lambda_1)}{w_1^m}   &\leq \frac{G_1(h_2,\lambda_1)}{w_1^m}+\beta_m  \|h_1 - h_2\|_m\\
		&\leq \frac{G_1(h_2,\lambda_2)}{w_1^m}+\beta_m \|h_1 - h_2\|_m + \Delta_1^m{|\lambda_1-\lambda_2|}\,.
	\end{aligned}
\end{equation*} 
Interchanging the roles of $(h_1,\lambda_1)$ and $(h_2,\lambda_2)$, we obtain
\begin{equation*}
	\frac{G_1(h_2,\lambda_2)}{w_1^m}   \leq  \frac{G_1(h_1,\lambda_1)}{w_1^m}+\beta_m \|h_1 - h_2\|_m + \Delta_1^m |\lambda_1-\lambda_2|\,.
\end{equation*}
Combining  the above two displays proves~\eqref{eq-G_contract} for $i=1$.

\noindent \textbf{Induction step: } Suppose that \eqref{eq-G_contract} holds for $1\leq i\leq  r-1$, where $r<n$. We show that it also holds for $r$. Let $v^*\in \Usm$ be a minimizing control policy for the right hand side of $G_r(h_2, \lambda_1).$ Using an argument analogous to that in the proof of Proposition~\ref{thm-contraction}, it follows that
\begin{equation*}
	\begin{aligned}
		&\frac{G_r(h_1,\lambda_1)-G_r(h_2,\lambda_1)}{w_r^m}\\
		&\quad= \frac{1}{w_r^m}\min_{u \in \bU(r)} \Big[ c(r,u) + \log \Big(p(r,n,u) + \sum_{j=1 }^{r-1} e^{G_j(h_1,\lambda_1)}p(r,j,u)  +\sum_{j=r}^{n-1}e^{h_1(j)}p(r,j,u)\Big) \Big]\\
		&\quad\qquad-\frac{1}{w_r^m}\min_{u \in \bU(r)} \Big[ c(r,u) + \log \Big(p(r,n,u) + \sum_{j=1 }^{r-1} e^{G_j(h_2,\lambda_1)}p(r,j,u)  +\sum_{j=r}^{n-1}e^{h_2(j)}p(r,j,u)\Big) \Big]\,\\
		&\quad \leq  \frac{1}{w_r^m}\log \bigg( p(r,n,v^*(r)) + \sum_{j=1}^{r-1}e^{G_j(h_1,\lambda_1)}p(r,j,v^*(r)) + \sum_{j=r}^{n-1}e^{h_1(j)}p(r,j,v^*(r))\bigg) \\
		&\quad\qquad -\frac{1}{w_r^m}\log \bigg( p(r,n,v^*(r)) + \sum_{j=1}^{r-1}e^{G_j(h_2,\lambda_1)}p(r,j,v^*(r)) + \sum_{j=r}^{n-1}e^{h_2(j)}p(r,j,v^*(r))\bigg).
	\end{aligned}
\end{equation*}
From here, following similar calculations as those leading up to~\eqref{eq-contraction-4}, we have
\begin{equation}\label{eq-GS_contraction_induct_3}
	\begin{aligned}
		&\frac{G_r(h_1,\lambda_1)-G_r(h_2,\lambda_1)}{w_r^m} \\
		& \qquad \leq \frac{1}{w_r^m}\bigg(\sum_{j=1}^{r-1}[G_j(h_1,\lambda_1) - G_j(h_2, \lambda_1)]q^*(r,j,v^*(r)) + \sum_{j=r}^{n-1}[h_1(j) - h_2(j)]q^*(r,j,v^*(r))\bigg),
	\end{aligned}
\end{equation}  	
with $$ q^*(r,j,v^*(r)) = \frac{e^{\tilde h_1(j)}p(r,j,v^*(r))}{\sum_{k=1}^n e^{\tilde h_1(k)}p(r,k,v^*(r))}\,.$$
Here, $\tilde h_{1}: \RR^n\rightarrow   \R^n, $ as follows
$$ \tilde h_{1}(j) = \begin{cases}
	G_j(h_1, \, \lambda_1), &j \leq r-1,\\
	h_1(j),\, &r \leq j \leq n-1,\\
	0, \, &j= n\,.
\end{cases}$$
Similarly, $\tilde h_{2}$ is defined. From the hypothesis of the theorem and Lemma~\ref{lem-bound-G}, we have $\|\widetilde h_1\|_\infty\leq m$. Therefore,  $q^*(r,\cdot, v^*(r)) \geq e^{-2m}p(r,\cdot,v^*(r))$.

Now following similar calculations as those leading up to~\eqref{eq-contraction_ineq_1}, we get
\begin{equation*}
	\begin{aligned}
		\frac{G_r(h_1,\lambda_1)-G_r(h_2,\lambda_1)}{w_r^m} &\leq \beta_m \max_{1 \leq i \leq n}\frac{|\tilde h_1(i) - \tilde h_2(i)|}{w_i^m} \leq \beta_m \| h_1 - h_2\|_m\,.
	\end{aligned}
\end{equation*} 
Interchanging the roles of $h_1$ and $h_2$, we get 
\begin{equation*}
	\frac{G_r(h_2,\lambda_1)-G_r(h_1,\lambda_1)}{w_r^m} \leq \beta_m \| h_1 - h_2\|_m\,.
\end{equation*}
From the above two displays, we have
\begin{equation*}
	\frac{|G_r(h_1,\lambda_1)-G_r(h_2,\lambda_1)|}{w_r^m} \leq \beta_m \| h_1 - h_2\|_m\,.
\end{equation*}
Recall that the induction hypothesis implies
\begin{equation}\label{eq-G-0} \frac{|G_i(h_2,\lambda_1) - G_i(h_2,\lambda_2)|}{w_i^m} \leq  \Delta_i^m|\lambda_1 - \lambda_2|, \quad 1 \leq i \leq r-1\,.\end{equation}
Using this, we next obtain a bound on $\frac{|G_r(h_2,\lambda_1)-G_r(h_2,\lambda_2)|}{w^m_r}$. To do this, we again follow similar arguments as those leading up to~\eqref{eq-GS_contraction_induct_3} and obtain 
\begin{equation}\nonumber
	\begin{aligned}
		\frac{G_r(h_2,\lambda_1)-G_r(h_2,\lambda_2)}{w_r^m} &\leq  \frac{1}{w_r^m}\bigg(\sum_{j=1}^{r-1}[G_j(h_2,\lambda_1) - G_j(h_2, \lambda_2)]\widehat q(r,j,v^*(r)) + (\lambda_2-\lambda_1)\bigg)\\\nonumber
		&\leq \frac{1}{w_r^m}\bigg(\sum_{j=1}^{r-1}|G_j(h_2,\lambda_1) - G_j(h_2, \lambda_2)|\widehat q(r,j,v^*(r)) + |\lambda_2-\lambda_1|\bigg)\\\nonumber
		&\leq \frac{1}{w_r^m}\bigg(\sum_{j=1}^{r-1}\Delta_i^m|\lambda_1 - \lambda_2|\widehat q(r,j,v^*(r)) + |\lambda_2-\lambda_1|\bigg)\\\nonumber
		&\leq \Delta^m_r|\lambda_2-\lambda_1|\, .
	\end{aligned}
\end{equation}
In the above,  $\widehat q(\cdot,\cdot,\cdot)$ is some  transition probability (unlike earlier, the exact form of $\widehat q(\cdot,\cdot,\cdot)$ is irrelevant for our purpose).  To get the third line, use~\eqref{eq-G-0} and to get the fourth line, we use the definition of $\Delta^m_r$ and the fact that $\sum_{j=1}^{r-1}\widehat q(r,j,v^*(r))\leq 1$.  Interchanging the roles of $\lambda_1$ and $\lambda_2$, we obtain 
\begin{equation}\nonumber
	\frac{G_r(h_2,\lambda_1)-G_r(h_2,\lambda_2)}{w_r^m}  		\leq \Delta^m_r|\lambda_2-\lambda_1|, 
\end{equation}
which gives us 
$$ \frac{|G_r(h_2,\lambda_1)-G_r(h_2,\lambda_2)|}{w_r^m} \leq \Delta^m_r|\lambda_2-\lambda_1|\,.$$
From here, 	it follows that
\begin{equation}\label{eq-G-2}
	\begin{aligned}
		\frac{G_r(h_1,\lambda_1)}{w^m_r}&\leq \frac{G_r(h_2,\lambda_1)}{w^m_r} + \beta_m\|h_1-h_2\|_m\\
		&\leq  \frac{G_r(h_2,\lambda_2)}{w^m_r} + \frac{|G_r(h_2,\lambda_1)-G_r(h_2,\lambda_2)|}{w^m_r} \\
		&\qquad  + \beta_m\|h_1-h_2\|_m\\
		&\leq \frac{G_r(h_2,\lambda_2)}{w_r^m} + \beta_m\|h_1-h_2\|_m + \Delta^m_r|\lambda_1-\lambda_2|\,.
	\end{aligned}
\end{equation}
Again, interchanging the roles of $(h_1,\lambda_1)$ and $(h_2,\lambda_2)$, we obtain a bound similar to the one in~\eqref{eq-G-2}, which, combined with~\eqref{eq-G-2}, gives us~\eqref{eq-G_contract} for $i=r$. This completes the proof of the proposition.
\end{proof}

				\end{document}